\documentclass[12pt,oneside,reqno]{amsart}

\usepackage[utf8]{inputenc}
\usepackage[T1]{fontenc}
\usepackage{hyperref}
\usepackage{amsmath}
\usepackage{amsthm}
\usepackage{amsfonts}
\usepackage{amssymb}
\usepackage{amsbsy}
\usepackage{subcaption}
\usepackage{pdfpages}
\usepackage{fancyhdr}
\usepackage[hmargin=22mm,vmargin=22mm,bmargin=30mm,tmargin=25mm]{geometry}
\usepackage{geometry}
\usepackage{setspace}
\usepackage{xcolor}
\usepackage{tikz}
\usepackage{pifont}
\usepackage{graphicx}
\usepackage{tabularx}
\usepackage{epsfig}
\usepackage{nccmath}

\usepackage[normalem]{ulem}

\usetikzlibrary{matrix}

  \usepackage{appendix}

  \usepackage{enumerate}

  \usepackage[sort, numbers]{natbib}
  \usepackage{bibentry}

  \newtheorem{theorem}{Theorem}[section]
  \newtheorem*{theorem*}{Theorem}
  \newtheorem*{lemma*}{Lemma}
  \newtheorem{theoremm}[theorem]{Theorem}
  \newtheorem{lemma}[theorem]{Lemma}
  \newtheorem{proposition}[theorem]{Proposition}
  
  \newtheorem{corollary}[theorem]{Corollary}
  
  \theoremstyle{definition}
  \newtheorem{definition}[theorem]{Definition}

  \newtheorem*{remark}{Remark}

  \numberwithin{equation}{section}

  \newcommand{\N}{{\mathbb N}}
  \newcommand{\Z}{{\mathbb Z}}

  \newcommand{\R}{{\mathbb R}}
  \newcommand{\C}{{\mathbb C}}
  \newcommand{\D}{{\mathbb D}}
  \newcommand{\T}{{\mathbb T}}

  \newcommand{\E}{{\mathsf E}}
  \newcommand{\sfA}{\mathsf{A}}

    \newcommand{\overbar}[1]{\mkern 1.5mu\overline{\mkern-1.5mu#1\mkern-1.5mu}\mkern 1.5mu}

  \newcommand{\cP}{{\mathcal{P}}}

  \newcommand{\cW}{{\mathcal{W}}}

\usetikzlibrary{decorations.pathreplacing,arrows.meta}

	\newcommand{\RS}{\overline{\C}}

\definecolor{apricot}{rgb}{0.98,0.81,0.69}

  \renewcommand{\Re}{\operatorname{Re}}
  \renewcommand{\Im}{\operatorname{Im}}
  \renewcommand{\Cap}{\operatorname{Cap}}

\usepackage{soul}
\usepackage{ulem}

\usepackage{caption}
\DeclareCaptionLabelFormat{cont}{#1~#2\alph{ContinuedFloat}}
  \newcommand{\bbD}{\mathbb{D}}

\usetikzlibrary{shapes}
\usetikzlibrary{plotmarks}
  \allowdisplaybreaks

\author{Jacob S. Christiansen$^{1,5}$, Benjamin Eichinger$^{2,6}$, Olof Rubin$^{3,7}$, and Maxim Zinchenko$^{4,8}$}

\thanks{$^1$ Centre for Mathematical Sciences, Lund University, Box 118, 22100 Lund, Sweden.
E-mail: jacob\_stordal.christiansen@math.lth.se}
\thanks{$^2$ School of Mathematical Sciences, Lancaster University, Lancaster, LA1 4YF, United Kingdom
E-mail:  b.eichinger@lancaster.ac.uk}
\thanks{$^3$ Division of Probability, Mathematical Physics and Statistics, KTH Royal Institute of Technology, Lindstedtsvägen 25, 11428 Stockholm, Sweden.
E-mail: orubin@kth.se}
\thanks{$^4$ Department of Mathematics and Statistics, University of New Mexico, Albuquerque, NM 87131, USA; e-mail: maxim@math.unm.edu}
\thanks{$^5$ Research supported by VR grant 2023-04054 from the Swedish Research Council and in part by DFF research project 1026-00267B from the Independent Research Fund Denmark.}
\thanks{$^6$ Research supported by the Austrian Science Fund FWF, Project No. P33885}
\thanks{$^7$ Research supported by Odysseus Grant G0DDD23N from 
the Research Foundation Flanders (FWO) and The G\"{o}ran Gustafsson Foundation for Research in Natural Sciences and Medicine}
\thanks{$^8$ Research supported in part by the Simons Foundation Grant MP-TSM-00002651.}

\title{Weighted residual polynomials on a circular arc}

\begin{document}

\begin{abstract}
We study the behavior of weighted residual polynomials on circular arcs, including weighted Chebyshev polynomials. For weights given by reciprocals of polynomials, we establish Szeg\H{o}–Widom asymptotics. Extending our analysis to less regular weights, we determine the asymptotic behavior of the corresponding weighted Widom factors, generalizing results by Eichinger and Thiran et al. As an application, we derive the asymptotics of Widom factors on certain lemniscatic arcs.
\end{abstract}
	\medskip	
	
	{\bf Keywords} {Residual polynomials, Circular arcs, Weight functions, Widom factors, Chebyshev polynomials}
	
\medskip	
	
	{\bf Mathematics Subject Classification} {41A50. 30C10. 33C45.}

\maketitle

\section{Introduction}
Given a set $\E\subset \overline{\C}$, where $\overline{\C}:=\C\cup\{\infty\}$ denotes the Riemann sphere, we define $\|\cdot\|_{\E}$ as the supremum norm, meaning that for any function $f:\E\rightarrow \C$,
\[
   \|f\|_{\E} := \sup_{z\in \E}|f(z)|.
\]
The central object in this work is $\cP_n$, the set of algebraic polynomials of degree at most $n\in \N$. 
For every pair consisting of a polynomially convex compact set $\E\subset \C$ (i.e., a compact set such that its complement $\C \setminus \E$ is connected) and a fixed bounded weight function $w: \E \to [0, \infty)$, we can associate a sequence of \emph{weighted residual polynomials}. 
For $n\in \N$ and $z_0\in \RS\setminus \E$, we will use the notation $R_{n}^{\E,w}(\cdot,z_0)$ to denote the unique polynomial of degree at most $n$ satisfying
\begin{align*} 
   (i)& \quad \|wR_{n}^{\E,w}(\cdot ,z_0)\|_{\E}=\sup_{z\in \E}\left|w(z)R_n^{\E,w}(z,z_0)\right|\leq 1,  
   \mbox{ and }  \\
   (ii)& \quad R_{n}^{\E,w}(z_0,z_0) = \sup\bigl\{|P(z_0)|: P\in \cP_n,\, \|wP\|_{\E}\leq 1\bigr\}. 
\end{align*}
For a polynomial $P$, the value $P(\infty)$ is to be understood as its leading coefficient. The normalization is specified so that the residual polynomial is positive at the point $z_0$. For a proof of existence and uniqueness, we refer the reader to \cite[Theorem 2.1\,b)]{christiansen-simon-zinchenko-V}. 

Alternatively, the characterization in $(i)$--$(ii)$ can be reformulated in its dual form: determine the polynomial that minimizes $\|wP\|_{\E}$ among all polynomials $P$ satisfying $P(z_0) = 1$.
In fact, this polynomial is given by
\begin{equation}
	T_n^{\E,w}(z,z_0)=\frac{R_n^{\E,w}(z,z_0)}{R_n^{\E,w}(z_0,z_0)},
	\label{eq:chebyshev_residual_relation}
\end{equation}
and we note that
\begin{equation}
	\|wT_n^{\E,w}(\cdot,z_0)\|_{\E} = \frac{1}{R_n^{\E,w}(z_0,z_0)}.
	\label{eq:dual_relation}
\end{equation}
The polynomials $T_n^{\E,w}:=T_{n}^{\E,w}(\cdot,\infty)$, corresponding to $z_0=\infty$, have received considerable attention in the literature (see, e.g., \cite{lorentz86,smirnov-lebedev68,achieser56,christiansen-simon-zinchenko-I,novello-schiefermayr-zinchenko21}). These so-called \emph{weighted Chebyshev polynomials} are fundamental objects in approximation theory. The more general residual polynomials also play a significant role in computational science, particularly in Krylov subspace iterations (see, e.g., \cite{driscoll-toh-trefethen98, kuijlaars06, fischer96}). 

Classical considerations include determining residual polynomials relative to the interval \([-1,1]\). For points \(x_0 \in (-\infty, -1) \cup (1, \infty)\), the alternating property of residual polynomials relative to real sets imply that  
\[
R_n^{[-1,1]}(x, x_0) = 2^{n-1}T_n(x),
\]
where \(T_n(x) := T_n^{[-1,1]}(x)\) denotes the classical Chebyshev polynomials on $[-1,1]$ (see \cite{markoff16, christiansen-simon-zinchenko-V}). The properties of residual polynomials relative to \([-1,1]\) for non-real points are more intricate. Specifically, purely imaginary points were studied in \cite{freund88, freund-ruscheweyh}, while the case of arbitrary points was addressed by Yuditskii in \cite[Proposition 1]{yuditskii99}. The latter reference will be particularly relevant for the discussion that follows.

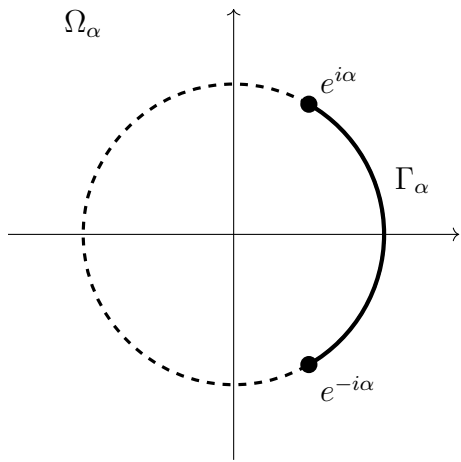
\begin{figure}[h!]
	\centering
	\begin{tikzpicture}[scale=2]
\begin{scope}[xshift=5cm]
    \draw[very thick, dashed] (1,0) arc[start angle=0, end angle=360, radius=1];
    \draw[ultra thick] (1,0) arc[start angle=0, end angle=60, radius=1];
    \draw[ultra thick] (1,0) arc[start angle=0, end angle=-60, radius=1];

    \draw[->] (-1.5,0) -- (1.5,0) node[right]{};
    \draw[->] (0,-1.5) -- (0,1.5) node[above]{} ;
	\draw[thick,fill=black] (0.5,0.866) circle(0.05)node[above right] {$e^{i\alpha}$};
    \draw[thick,fill=black] (0.5,-0.866) circle(0.05)node[below right] {$e^{-i\alpha}$};

    \node[above right] at (1,0.2) {$\Gamma_\alpha$};
	
    \node at (-1,1.4) {$\Omega_\alpha$};
\end{scope}
\end{tikzpicture}

\caption{A circular arc $\Gamma_\alpha = \{e^{it}: -\alpha\leq t \leq \alpha\}$.}
\label{fig:circular_arcs}
\end{figure}

Our interest is directed toward weighted residual polynomials on connected subsets of the unit circle $\T$. Such studies extend the results of \cite{eichinger17}. Any number $0<\alpha<\pi$ gives rise to a circular arc as pictured in Figure \ref{fig:circular_arcs}; it is determined by the formula
 \begin{equation}
	\Gamma_\alpha := \{e^{it}: -\alpha\leq t \leq \alpha\}.
\end{equation}
Given a weight function $w:\Gamma_\alpha\rightarrow [0,\infty)$, we wish to describe the asymptotic behavior of the corresponding weighted residual polynomials. 

To explain the novel aspects of this work, we employ concepts from potential theory. For background on this topic, we refer the reader to \cite{ransford95, garnett-marschall05, landkof72, armitage-gardiner01}. 
Let $\Omega$ be a proper subdomain of $\overline{\C}$ and let $\zeta\in\Omega$. Suppose that $\partial\Omega$ is non-polar, meaning it has positive logarithmic capacity. Then $\Omega$ admits a unique Green's function with pole at $\zeta$, which we denote by $g_{\Omega}(\cdot, \zeta)$. 
The harmonic measure of a Borel set $I\subset\partial\Omega$ with respect to the base point $\zeta$ is denoted by $\omega(\zeta, I; \Omega)$. Recall that if $f:\partial\Omega\rightarrow \R$ is continuous, then the function $h: \Omega\to \R$ defined by
\begin{equation}
	h(z) := \int_{\partial \Omega} f(x)\,\omega(z, dx; \Omega)
	\label{eq:harmonic_measure_property}
\end{equation}
is harmonic in $\Omega$ and assumes the boundary value $f(x)$ at all regular points of $\partial\Omega$.
In the specific case where $\Omega := \overline{\C} \setminus \E$ is the complement of a compact, non-polar, and polynomially convex set $\E \subset \C$, it follows that the boundary of $\Omega$ coincides with that of $\E$. This allows the Green's function for $\Omega$ with pole at infinity to be expressed directly via the potential theory of $\E$:
\begin{equation}
g_{\Omega}(z, \infty) = \int \log |z - x|\,d\mu_\E(x) - \log \Cap(\E), \quad z \in \Omega,
\end{equation}
where $\mu_\E$ is the equilibrium measure of $\E$ and $\Cap(\E)$ is its logarithmic capacity. The equilibrium measure, in turn, coincides with the harmonic measure at $\zeta=\infty$ for the domain $\Omega$.





Introducing the notation 
\begin{equation}
   \Omega_\alpha := \RS\setminus \Gamma_\alpha
   \; \mbox{ and } \; 
   \C_+ := \{z:\Re z>0\},
\end{equation}    
we will associate the variables $u$ and $\lambda$ with these domains, respectively. 
The first study of residual polynomials relative to $\Gamma_\alpha$ was conducted in \cite{eichinger17}. Recall that the reproducing kernel for the weighted Bergman space on $\C_+$ with respect to the measure $\lambda\,  dA(\lambda)$, where $dA$ denotes the area measure, is given by
\begin{equation}
   k_{\C_+}(\lambda,\lambda_0) := \frac{2\lambda\overline{\lambda}_0}{(\lambda+\overline{\lambda}_0)^2}.
\end{equation}
For $u\in \Omega_\alpha$, it is proven in \cite[Theorem 1.3]{eichinger17} that  
\begin{equation}
	\|T_n^{\Gamma_\alpha}(\cdot,u)\|_{\Gamma_\alpha} = \frac{1}{R_n^{\Gamma_\alpha}(u,u)} \sim e^{-ng_{\Omega_\alpha}(u,\infty)}k_{\C_+}\bigl(\lambda(u),\lambda(u)\bigr)^{-1},
	\label{eq:eichinger_asymptotics}
\end{equation}
where $\lambda:\Omega_\alpha\rightarrow \mathsf{S}_{\pi/4}:=\{z\in \C_+: |\arg z|< \pi/4\}$ is the conformal map given by
\begin{equation}
\label{la}
   \lambda(u) := \left(\frac{ue^{i\alpha}-1}{u-e^{i\alpha}}\right)^{1/4}.
\end{equation}  
As usual, the symbol $\sim$ denotes that the ratio of the two expressions tends to $1$ as $n\rightarrow \infty$. 

\begin{center}
\begin{tikzpicture}

\begin{scope}
	\draw[dashed, ultra thick] (2,0) arc[start angle=0, end angle=45, radius=2];
    \draw[dashed, ultra thick] (2,0) arc[start angle=0, end angle=-45, radius=2];

    \draw[thick, ->] (-3,0) -- (3,0) node[right] {$\text{Re}(u)$};
    \draw[thick, ->] (0,-3) -- (0,3) node[above] {$\text{Im}(u)$};
    
	\draw[thick,fill=black] (1.4142135624,1.4142135624) circle(0.05)node[above right] {$e^{i\alpha}$};
    \draw[thick,fill=black] (1.4142135624,-1.4142135624) circle(0.05)node[below right] {$e^{-i\alpha}$};

    \node[below right] at (2,0) {$1$};
	
    \node at (-1.4,1.7) {$\Omega_\alpha$};
\end{scope}

\begin{scope}[xshift=8cm]
    \draw[dashed, ultra thick] (0,0) -- (3,3);
    \draw[dashed, ultra thick] (0,0) -- (3,-3);
    
    \fill[gray!20] (0,0) -- (3,3) -- (3,-3) -- cycle;
  	\draw[thick,->] (-3,0) -- (3,0) node[below right] {$\Re(\lambda)$};
  	\draw[thick,->] (0,-3) -- (0,3) node[above left] {$\Im(\lambda)$};

  	\node[left,black] at (3,1.3) {$\mathsf{S}_{\pi/4}$};
  	\draw[thick,fill=black] (0,0) circle(0.05)node[below left] {$\lambda(e^{-i\alpha})$};
\end{scope}

\draw[->, thick] (3,1) to[out=30, in=150] (7,1);

\node[above] at (5,1.9) {$\lambda(u)$};
\end{tikzpicture}	
\end{center}


First, we extend this result to weighted polynomials, where the weight is the multiplicative inverse of a polynomial. Ultimately, this will enable us to prove that for $u\in \Omega_\alpha$ and a general class of weights, whose properties are detailed in Theorem \ref{thm:main_norm_asymptotics},
\begin{equation}
	\|wT_n^{\Gamma_{\alpha},w}(\cdot ,u)\|_{\Gamma_\alpha} \sim e^{-ng_{\Omega_\alpha}(u,\infty)}k_{\C_+}\bigl(\lambda(u),\lambda(u)\bigr)^{-1}\exp\left(\int_{\Gamma_\alpha}\log w(x)\,\omega(u,dx,\Omega_{\alpha})\right).
	\label{eq:residual_norm_asymptotics}
\end{equation}
In the special case $u = \infty$, we obtain the asymptotic formula
\begin{equation}
	\lim_{n\rightarrow \infty}\frac{\|wT_n^{\Gamma_{\alpha},w}\|_{\Gamma_\alpha}}{\Cap(\Gamma_\alpha)^n}= 2\cos^2(\alpha/4)\exp\left(\int_{\Gamma_\alpha}\log w(x)\,d\mu_{\Gamma_\alpha}(x)\right).
	\label{eq:chebyshev_norm_asymptotics}
\end{equation}
This should be compared with the unweighted ($w\equiv 1$) asymptotic formula
\begin{equation}
\lim_{n\rightarrow \infty}\frac{\|T_n^{\Gamma_\alpha}\|_{\Gamma_\alpha}}{\Cap(\Gamma_\alpha)^n} = 2\cos^2(\alpha/4)
\end{equation}
that was derived in \cite{thiran-detaille91}. 
A deeper understanding of the monotonicity properties of this convergence was later obtained through \cite[Theorem 8]{schiefermayr-zinchenko21}. The context for this result is historically significant. For a bounded upper semicontinuous function $w:\Gamma\rightarrow [0,\infty)$ with $\log w \in L^1(\mu_\Gamma)$, Widom established the general upper bound \cite[Theorem 11.4]{widom69}:
\begin{equation}
\limsup_{n \to \infty} \frac{\|wT_n^{\Gamma,w}\|_{\Gamma}}{\Cap(\Gamma)^n} \leq 2\exp\left(\int_{\Gamma} \log w(x) \, d\mu_{\Gamma}(x)\right).
\label{eq:alpan_asymptotics}
\end{equation}
In the unweighted case, the integral vanishes, reducing this bound to 2. Motivated by this and the known result for an interval, Widom conjectured that for any sufficiently smooth Jordan arc $\Gamma$, the limit would always be exactly 2, i.e., that $\lim_{n\rightarrow \infty}\|T_n^{\Gamma}\|_{\Gamma}/\Cap(\Gamma)^n = 2$. The example under discussion is notable because it was the first to disprove this conjecture.

In sharp contrast to Widom's conjecture, a recent result by Alpan \cite[Theorem 1.3]{alpan22} proves that the inequality in \eqref{eq:alpan_asymptotics} is strict if there is an interior point $z\in \Gamma$ (i.e., a point other than its endpoints) for which
\begin{equation}
\frac{\partial g_{\Omega_\Gamma}}{\partial n_+}(z,\infty) \neq \frac{\partial g_{\Omega_\Gamma}}{\partial n_-}(z,\infty), \quad \Omega_\Gamma = \RS\setminus \Gamma.
\label{eq:not-s-property-arc}
\end{equation}
Here, the normal derivatives are taken with respect to the two different sides of the arc. As later observed in \cite[Theorem 1.1]{christiansen-eichinger-rubin23}, the only arc for which equality in \eqref{eq:not-s-property-arc} holds at all interior points is a straight line segment, which therefore constitutes the sole arc where Widom's conjecture is valid.


When it comes to lower bounds, we may also consider 
any weight function \( w: \E \to [0,\infty) \) for which \( \log w \in L^1(\mu_\E) \). In this case, Szeg\H{o}'s inequality holds:
\begin{equation}
\|wT_n^{\E,w}\|_{\E} \geq \Cap(\E)^n \exp\left(\int_{\E} \log w(x) \, d\mu_\E(x)\right),
\label{eq:szego_inequality}
\end{equation}
as shown in \cite[Theorem 13]{novello-schiefermayr-zinchenko21}. Based on \eqref{eq:alpan_asymptotics} and \eqref{eq:szego_inequality}, it is natural to introduce the \emph{weighted Widom factors} as
\begin{equation}
\mathcal{W}_n(\E,w) := \frac{\|wT_n^{\E,w}\|_{\E}}{\Cap(\E)^n}.
\label{eq:widom_factor}
\end{equation}
The unweighted Widom factors, corresponding to $w\equiv 1$, will be denoted by $\cW_n(\E)$. For an arc \( \Gamma \), combining \eqref{eq:alpan_asymptotics} and \eqref{eq:szego_inequality} implies that all limit points of \( \mathcal{W}_n(\Gamma,w) \) lie within the range
\begin{equation}
	\left[\exp\left(\int_{\Gamma} \log w(x) \, d\mu_{\Gamma}(x)\right), 2\exp\left(\int_{\Gamma} \log w(x) \, d\mu_{\Gamma}(x)\right)\right].
	\label{eq:upper_lower_bounds_widom_factors}
\end{equation}
We emphasize that exact formulas for limit points of the Widom factors 
associated with arcs are scarce in the literature. Notably, the case of an interval remains the only thoroughly studied example, largely due to the seminal work of Bernstein \cite{bernstein30, bernstein31}. For the norms of unweighted Chebyshev polynomials relative to an arc, the only case understood beyond an interval is that of a circular arc.

Exploring the implications of the results presented here, as well as those in \cite{eichinger17}, for analyzing convergence rates of Krylov subspace methods would be interesting but lies beyond the scope of this article. In particular, it would be of interest to investigate algorithms related to unitary matrices with restricted spectra. 
However, due to the numerical nature of such considerations, we have opted not to include them in the present discussion.

\subsection{Outline}
In Section \ref{sec:chebyshev_polynomials_system_of_arcs}, we analyze the asymptotic behavior of the norms of Chebyshev polynomials associated with subsets of $\{z:|z^m+1| = r\}$ for $r>0$. 
Our complete asymptotic analysis is collected in Theorem \ref{thm:Widom_factors_lemniscatic_arcs} which relies on the weighted formulas proven in Section \ref{sec:general_weight}. One of our goals is to analyze the effects of \eqref{eq:not-s-property-arc} on the potential limit points of the Widom factors for a connected system of arcs.


In Section \ref{sec:reciprocal_weight}, we will adopt the approach from \cite{eichinger17} where Szeg\H{o}--Widom asymptotics is established for circular arcs in the general residual polynomial setting. In fact, we will provide similar results for weight functions which are given as reciprocals of polynomials. To be more precise, in Theorem 
\ref{thm:weighted_residual_asymptotics} we determine the asymptotic behavior of 
   $R_n^{\Gamma_\alpha,1/P}(\cdot, u)$
when $P$ is a polynomial which does not vanish on the unit circle. 
This result enables us to determine the asymptotic behavior of 
   $\|T_n^{\Gamma_\alpha,1/P}(\cdot,u)/P\|_{\Gamma_\alpha}$
in Theorem \ref{thm:weighted_residual_asymptotics_at_point}, which serves as the foundation for the work presented in Section \ref{sec:general_weight}. There, we determine the asymptotic behavior of \(\|wT_n^{\Gamma_\alpha,w}(\cdot,u)\|_{\Gamma_\alpha}\), with a particular focus on the case \(u=\infty\), for a general class of weight functions, including those that may vanish on $\Gamma_\alpha$. Specifically, Theorem \ref{thm:main_norm_asymptotics} generalizes the results of Bernstein \cite{bernstein30,bernstein31}, who studied the asymptotic behavior of $\|wT_n^{[-1,1],w}\|_{[-1,1]}$ for general weight functions $w:[-1,1]\rightarrow [0,\infty)$.

\section{Chebyshev polynomials on lemniscatic arcs}
\label{sec:chebyshev_polynomials_system_of_arcs}
To illustrate the applicability of Theorem \ref{thm:main_norm_asymptotics}, we derive asymptotic formulas for the Widom factors associated with a family of unions of arcs. These results complement the analysis in \cite{christiansen-eichinger-rubin23}. 

As previously mentioned, when $\Gamma$ is a smooth arc,
\[
\limsup_{n \to \infty} \cW_n(\Gamma, w) \leq 2 \exp\left(\int_{\Gamma} \log w(x) \, d\mu_\Gamma(x)\right),
\] 
with equality if and only if $\Gamma$ is a straight line segment. This result is rooted in the fact that a straight line segment is the only Jordan arc \(\Gamma\) satisfying 
\begin{equation} 
\frac{\partial g_{\Omega_\Gamma}}{\partial n_+}(z,\infty) = \frac{\partial g_{\Omega_\Gamma}}{\partial n_-}(z,\infty), \quad \Omega_\Gamma = \RS\setminus \Gamma
\label{eq:s-property-arc}
\end{equation} 
at all interior points. The generalization of \eqref{eq:s-property-arc} to a system of arcs is given by the $S$-property, which is formally defined below in Definition \ref{def:s-property}.

The results of \cite{christiansen-eichinger-rubin23} suggest that the $S$-property is the key mechanism behind large limits of Widom factors, specifically leading to a limit of 2. In this section, we provide further evidence for this by examining a family of unions of arcs where the $S$-property fails and show that their corresponding Widom factors are asymptotically strictly less than 2.

\subsection{Main Result}
Extremal polynomials on lemniscates have been extensively studied, particularly for sets of the form $\{z:z^m+1\in r\T\}$ (see e.g. \cite{bergman-rubin24, peherstorfer-steinbauer01, minadiaz06, ullman60, he94}). To obtain a system of arcs from this structure, we consider the pre-image of a circular arc rather than a full circle:
\begin{equation}
	\E_{m,r}(\alpha):=\{z: z^m+1\in r \Gamma_\alpha\},\quad 0<\alpha<\pi,
	\label{eq:def_lemniscatic_arc}
\end{equation}
where $m\in \N$ and $r>0$. The geometry of these sets depends critically on the parameter $r$. The sets in \eqref{eq:def_lemniscatic_arc} consist of \(m\) disjoint analytic arcs when \(r \neq 1\). However, if \(r = 1\), the components connect at the origin, where \(2m\) analytic arcs meet. Examples of these structures are shown in Figure \ref{fig:system_arcs}.

\begin{figure}[htbp]
    \centering
    \begin{subfigure}[t]{0.32\textwidth}
        \centering
        \includegraphics[width=0.8\textwidth]{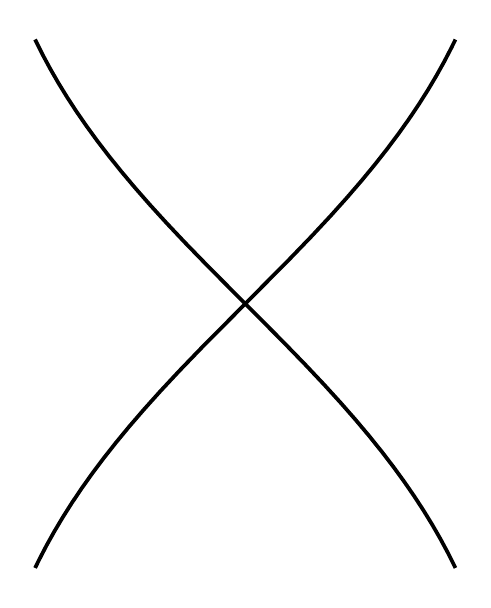} 
        \caption{\(\E_{2,1}(\pi/7)\)}
    \end{subfigure}
    \hfill
    \begin{subfigure}[t]{0.32\textwidth}
        \centering
        \includegraphics[width=\textwidth]{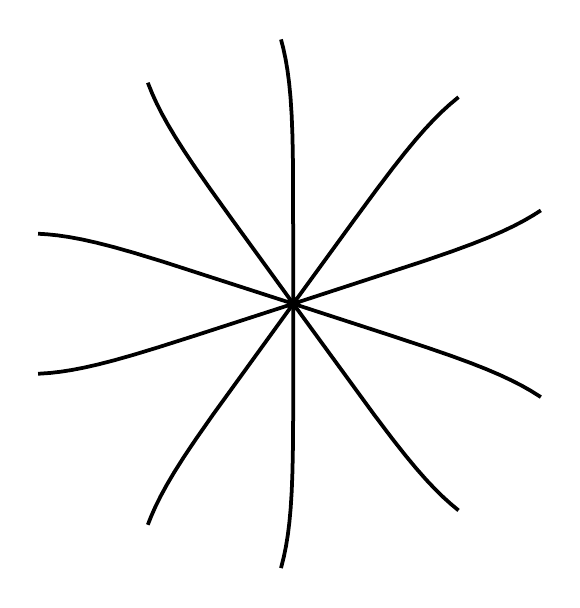} 
        \caption{\(\E_{5,1}(\pi/7)\)}
    \end{subfigure}
    \hfill
    \begin{subfigure}[t]{0.32\textwidth}
        \centering
        \includegraphics[width=\textwidth]{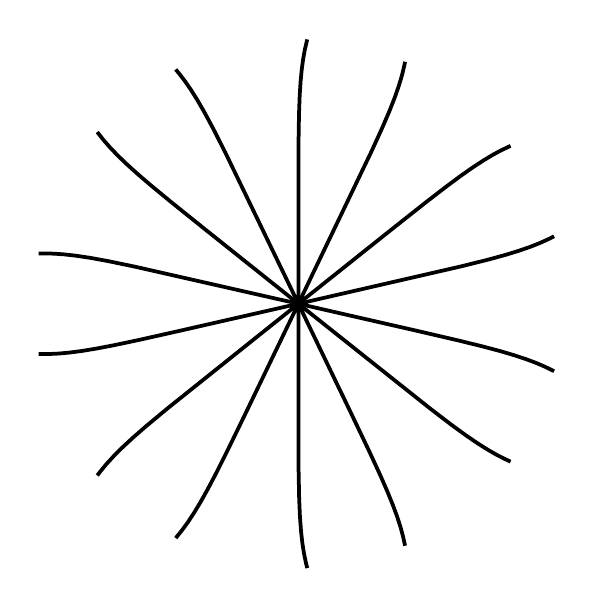} 
        \caption{\(\E_{7,1}(\pi/7)\)}
    \end{subfigure}
    \begin{subfigure}[t]{0.32\textwidth}
        \centering
        \includegraphics[width=0.8\textwidth]{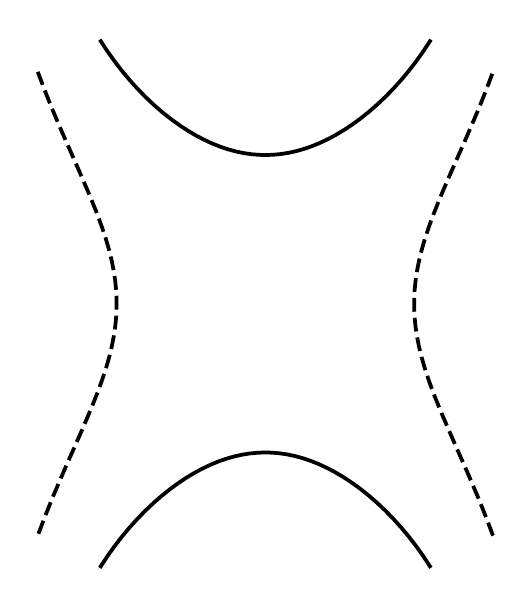} 
        \caption{\(\E_{2,0.9}(\pi/7)\) and \(\E_{2,1.1}(\pi/7)\)}
    \end{subfigure}
    \hfill
    \begin{subfigure}[t]{0.32\textwidth}
        \centering
        \includegraphics[width=\textwidth]{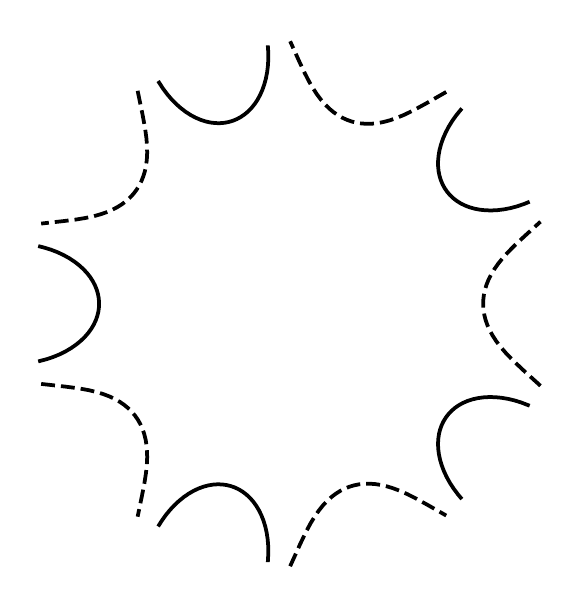} 
        \caption{\(\E_{5,0.9}(\pi/7)\) and \(\E_{5,1.1}(\pi/7)\)}
    \end{subfigure}
    \hfill
    \begin{subfigure}[t]{0.32\textwidth}
        \centering
        \includegraphics[width=\textwidth]{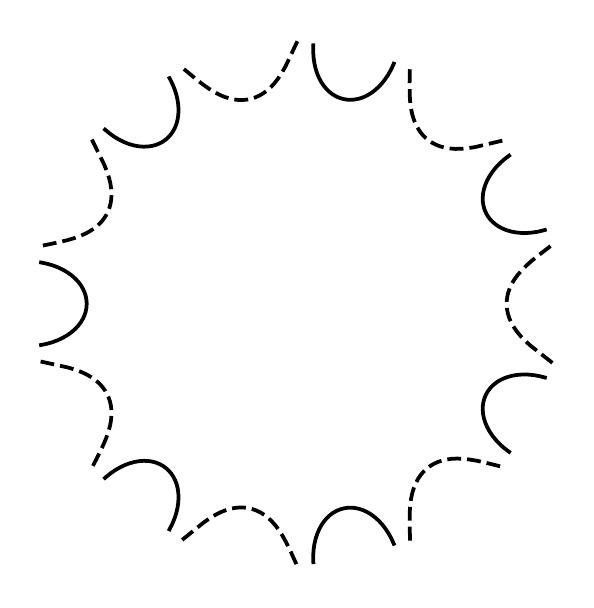} 
        \caption{\(\E_{7,0.9}(\pi/7)\) and \(\E_{7,1.1}(\pi/7)\)}
    \end{subfigure}

    \caption{Examples of sets in the families $\E_{2,r}(\pi/7)$, $\E_{5,r}(\pi/7)$ and $\E_{7,r}(\pi/7)$ for varying values of $r$. The lines corresponding to \(r=1.1\) are dashed.}
    \label{fig:system_arcs}
\end{figure}

Our main result provides the explicit limit of the Widom factors for this family of sets.

\begin{theorem} 	
\label{thm:Widom_factors_lemniscatic_arcs}
	Let $\E_{m,r}(\alpha)=\{z: z^m+1\in r \Gamma_\alpha\}$, where $m\in \N$, $r>0$, and $0<\alpha<\pi$. Then, for any $0\leq l <m$, we have
\begin{equation}
   \lim_{n\rightarrow \infty}\cW_{nm+l}\bigl(\E_{m,r}(\alpha)\bigr) = 2\cos^2(\alpha/4)\, c(r,\alpha)^{l/m}
\end{equation} 	
	where
\begin{equation}
\label{eq:c_r_alpha_definition}
c(r,\alpha)=\frac{\vert 1-r \vert + \sqrt{1-2r\cos(\alpha)+r^2}}{2r\sin(\alpha/2)}.
\end{equation}
	In particular, for the connected case where $r = 1$, we have
\begin{equation}
\label{eq:widom_limit_r_1}
\lim_{n\rightarrow \infty}\cW_n\bigl(\E_{m,1}(\alpha)\bigr) = 2\cos^2(\alpha/4).
\end{equation}
\end{theorem}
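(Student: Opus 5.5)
The plan is to reduce the Chebyshev problem on $\E:=\E_{m,r}(\alpha)$ to a weighted Chebyshev problem on the circular arc $\Gamma_\alpha$ and then apply Theorem \ref{thm:main_norm_asymptotics}. Write $Q(z):=(z^m+1)/r$, so that $\E=Q^{-1}(\Gamma_\alpha)$. Since $\E$ is the full polynomial preimage of $\Gamma_\alpha$, we have $\Cap(\E)=(r\Cap(\Gamma_\alpha))^{1/m}$ and $\mu_\E$ is the pullback of $\mu_{\Gamma_\alpha}$. The set $\E$ is invariant under the rotation $z\mapsto e^{2\pi i/m}z$.

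\textbf{Step 1: symmetry reduction.} By uniqueness and rotation invariance, $T_{nm+l}^{\E}(e^{2\pi i/m}z)=e^{2\pi i (nm+l)/m}T_{nm+l}^{\E}(z)$. Hence $T_{nm+l}^{\E}(z)=z^{l}S(z^m)$ for some monic $S$ of degree $n$. Substituting $v=Q(z)$ gives $z^m=rv-1$ and $|z|^l=|rv-1|^{l/m}$. Therefore
\[
\|T_{nm+l}^{\E}\|_{\E}=r^{n}\,\min_{\deg \tilde S=n,\ \text{monic}}\bigl\|\,|rv-1|^{l/m}\tilde S(v)\bigr\|_{\Gamma_\alpha}
= r^{n}\,\|wT_n^{\Gamma_\alpha,w}\|_{\Gamma_\alpha},
\qquad w(v):=|rv-1|^{l/m}.
\]
Conversely, any monic $\tilde S$ produces an admissible polynomial $z^l r^n\tilde S(Q(z))$, so the two minimization problems coincide.

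\textbf{Step 2: apply the weighted result.} The weight $w$ is continuous on $\Gamma_\alpha$. It is positive when $r\neq 1$. When $r=1$ it vanishes only at $v=1\in\Gamma_\alpha$, and there only like a power. Such weights should lie in the class of Theorem \ref{thm:main_norm_asymptotics}; confirming this is the one point where its hypotheses must be checked. We then get \eqref{eq:chebyshev_norm_asymptotics}:
\[
\cW_{nm+l}(\E)=\frac{r^n\|wT_n^{\Gamma_\alpha,w}\|}{r^{n+l/m}\Cap(\Gamma_\alpha)^{n+l/m}}\to 2\cos^2(\alpha/4)\Bigl(\frac{e^{\int\log|rv-1|\,d\mu_{\Gamma_\alpha}}}{r\Cap(\Gamma_\alpha)}\Bigr)^{l/m}.
\]

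\textbf{Step 3: evaluate the integral.} Write $\int\log|rv-1|\,d\mu_{\Gamma_\alpha}=\log r+\int\log|v-1/r|\,d\mu_{\Gamma_\alpha}$. By the formula for the Green's function this equals $\log r+\log\Cap(\Gamma_\alpha)+g_{\Omega_\alpha}(1/r,\infty)$, which is valid also for $1/r\in\Gamma_\alpha$ (then $g=0$, by regularity). So the bracketed quantity is $e^{g_{\Omega_\alpha}(1/r,\infty)}$, and the claim reduces to $c(r,\alpha)=e^{g_{\Omega_\alpha}(1/r,\infty)}$.

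This last identity is the main computational obstacle. I would compute it using the explicit exterior map of the arc. With $\Cap(\Gamma_\alpha)=\sin(\alpha/2)$, one has $e^{g(x,\infty)}=|\Phi(x)|$ for a real point $x$, where $\Phi$ is the conformal map of $\Omega_\alpha$ onto the exterior of the disk. The map $\Phi$ can be obtained via the Joukowski-type formula
\[
\Phi(u)=\frac{u+1+\sqrt{(u-e^{i\alpha})(u-e^{-i\alpha})}}{2\sqrt{u}\,\sin(\alpha/2)}\,(\cdot),
\]
or equivalently from the Möbius map sending $\Gamma_\alpha$ to a segment. Evaluating at $u=1/r$ gives $\sqrt{(u-e^{i\alpha})(u-e^{-i\alpha})}=\sqrt{1-2r\cos\alpha+r^2}/r$. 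The term $|1-r|$ arises as $|u-1|\cdot r$ after the branch is chosen correctly on either side of $r=1$, and this produces exactly \eqref{eq:c_r_alpha_definition}. As a check, for $r=1$ we get $c=(2\sin(\alpha/2))/(2\sin(\alpha/2))=1$, which yields \eqref{eq:widom_limit_r_1} for all $l$.
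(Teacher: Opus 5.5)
Steps 1 and 2, and the reduction in Step 3, follow the paper's proof exactly. These are the symmetry argument giving $T_{nm+l}^{\E}(z)=z^lS(z^m)$, the transfer to the weight $|rv-1|^{l/m}$ on $\Gamma_\alpha$, $\Cap(\E)=(r\Cap(\Gamma_\alpha))^{1/m}$, the use of Theorem~\ref{thm:main_norm_asymptotics}, and $\int\log|rv-1|\,d\mu_{\Gamma_\alpha}=\log r+\log\Cap(\Gamma_\alpha)+g_{\Omega_\alpha}(1/r,\infty)$ with regularity at $r=1$. The hypothesis check you left open is immediate. $\Gamma_\alpha$ meets $(0,\infty)$ only at $1$, so for $r\neq1$ the weight is continuous and bounded away from $0$ and $\infty$. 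For $r=1$ it is $w_0|v-u_1|^{s_1}$ with $w_0\equiv1$, $u_1=1\in\Gamma_\alpha$, $s_1=l/m$.

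The gap is in the step you flagged, $e^{g_{\Omega_\alpha}(1/r,\infty)}=c(r,\alpha)$: the map you display is not the exterior map of $\Gamma_\alpha$. First, $\sqrt u$ is not single-valued on $\Omega_\alpha$, which contains both $0$ and $\infty$. Second, on $u=e^{it}\in\Gamma_\alpha$ the displayed expression equals $\bigl(\cos(t/2)\pm\sqrt{\cos^2(t/2)-\cos^2(\alpha/2)}\bigr)/\sin(\alpha/2)$. This is $\cot(\alpha/2)$ at the endpoints and $\csc(\alpha/2)\pm1$ at $t=0$, so its modulus is not constant on the arc and it cannot produce the Green's function. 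You also assert that the evaluation "produces exactly" $c(r,\alpha)$ without doing it. Your remark that $|1-r|$ comes from $|u-1|$ fits only the correct map, which is the one the paper uses:
\[
f(u)=\tfrac12\Bigl(u-1+\sqrt{(u-e^{i\alpha})(u-e^{-i\alpha})}\Bigr),\qquad f(u)=u+O(1)\ \text{ as } u\to\infty .
\]
To verify it, let $u=e^{it}$ with $|t|\le\alpha$. Then $(u-e^{i\alpha})(u-e^{-i\alpha})=2e^{it}(\cos t-\cos\alpha)$ and $u-1=2ie^{it/2}\sin(t/2)$. Hence $|f|^2=\sin^2(t/2)+\tfrac12(\cos t-\cos\alpha)=\sin^2(\alpha/2)$ on $\Gamma_\alpha$. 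The only candidate zero of $f$ is $u=0$, where $f(0)=-1$, so $g_{\Omega_\alpha}(u,\infty)=\log\bigl(|f(u)|/\sin(\alpha/2)\bigr)$.

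For the branch, note that $x^2-2x\cos\alpha+1>0$ on $\R$ and the cut meets $\R$ only at $1$. So the root is positive on $(1,\infty)$ and negative on $(-\infty,1)$. This gives $|f(1/r)|=\bigl(|1-r|+\sqrt{1-2r\cos\alpha+r^2}\bigr)/(2r)$ for both $r<1$ and $r>1$, i.e.\ $e^{g_{\Omega_\alpha}(1/r,\infty)}=c(r,\alpha)$. Your alternative of a M\"obius map sending $\Gamma_\alpha$ to a segment would also work, but you did not carry it out. With this correction your argument is complete and coincides with the paper's.
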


This theorem is significant because, as we will show in Proposition \ref{prop:no_s_property}, the connected set $\E_{m,1}(\alpha)$ does not satisfy the $S$-property. The explicit formula \eqref{eq:widom_limit_r_1} demonstrates that this absence of the $S$-property leads to a limit strictly less than 2, supporting the conjecture that the $S$-property is essential for Widom factors of connected sets to approach 2.

\subsection{Proof of Theorem \ref{thm:Widom_factors_lemniscatic_arcs}}
The proof proceeds in several steps. First, we formally define the $S$-property and prove that the set $\E_{m,1}(\alpha)$ does not possess it. Second, we use symmetry arguments to reduce the problem to an analysis of weighted Chebyshev polynomials on the circular arc $\Gamma_\alpha$. Finally, we compute the necessary potential-theoretic quantities to derive the asymptotic formula.

We begin with the formal definition.
\begin{definition}
\label{def:s-property}
    Let $\E\subset \C$ be a compact set with $\Cap(\E)>0$, and suppose that $\Omega_\E=\RS\setminus \E$ is connected. Additionally, assume there exists a subset $\E_0\subset \E$ with $\Cap(\E_0) = 0$, such that
    \begin{equation}
    	\E\setminus \E_0 = \bigcup_{i\in I}\Gamma_i,
    \end{equation}
    where the $\Gamma_i$'s are disjoint open analytic Jordan arcs, and $I\subset \N$. We say that $\E$ satisfies the \emph{$S$-property} if
    \begin{equation}
		\frac{\partial g_{\Omega_\E}}{\partial n_+}(z,\infty)=\frac{\partial g_{\Omega_\E}}{\partial n_-}(z,\infty)
		\label{eq:s-property}
	\end{equation}
    holds for all $z\in \E\setminus \E_0$, where $n_+$ and $n_-$ denote the unit normals from each side of the arcs.
\end{definition}

The \(S\)-property is closely related to capacity minimization, and we refer the reader to \cite{stahl85-1,stahl85-2,stahl12} for further details on this. In \cite{christiansen-eichinger-rubin23}, the Chebyshev polynomials associated with the $m$-stars defined by $\E_m:=\{z:z^m\in [-2,2]\}$ are studied. These $m$-stars serve as examples of sets with the $S$-property, and it is shown that $\cW_n(\E_m)\rightarrow 2$ as $n\rightarrow \infty$ for any $m$. This behavior is analogous to that of Jordan arcs satisfying the $S$-property (i.e., straight line segments).

In order to show that $\E_{m,1}(\alpha)$ does not satisfy \eqref{eq:s-property} at interior points of the subarcs, we utilize the connection to Chebotarev sets. Given $N$ distinct points $a_1\dotsc,a_N$ in the complex plane, there is a unique continuum (i.e., a compact connected set) of minimal logarithmic capacity containing these points. This set is known as the Chebotarev set for $\{a_1,\ldots,a_N\}$, with relevant references detailing its properties found in \cite{stahl12, grotzsch30, schiefermayr14, schiefermayr15, kuzmina80, fedorov85, martinez-finkelshtein-rakhmanov11, ortega-cerda-pridhnani08}. 
It is known (see \cite{stahl12}) that every Chebotarev set necessarily has the form specified in Definition \ref{def:s-property} and therefore satisfies the $S$-property. Conversely, a compact set with the specified structure of analytic arcs that satisfies the $S$-property must be the Chebotarev set for some finite collection of points, as can be inferred from the discussion following Problem 2 and Theorem 11 in \cite{stahl12}.
In our case, this implies the following: if $\E_{m,1}(\alpha)$ were to satisfy \eqref{eq:s-property} away from the points \(\{0\}\cup\{z: z^m+1\in e^{\pm i \alpha}\}\), it would necessarily coincide with the Chebotarev set associated with these points, excluding the point at zero.

\begin{proposition}
\label{prop:no_s_property}
For $0<\alpha<\pi$ and $m\in \N$, the set $\E_{m,1}(\alpha)$, as defined in \eqref{eq:def_lemniscatic_arc}, does not satisfy the $S$-property.
\end{proposition}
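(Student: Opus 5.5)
The plan is to transfer the question from $\E_{m,1}(\alpha)$ to the circular arc $\Gamma_\alpha$ through the polynomial map $Q(z):=z^m+1$, and then to show directly that $\Gamma_\alpha$ has different normal derivatives on its two sides at every interior point.

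\emph{Step 1: pull back the Green's function.} Since $\E_{m,1}(\alpha)=Q^{-1}(\Gamma_\alpha)$ and $Q$ is a monic polynomial of degree $m$, the standard pull-back formula gives
\[
g_{\Omega_{\E}}(z,\infty)=\tfrac1m\, g_{\Omega_\alpha}\bigl(Q(z),\infty\bigr),\qquad \Omega_\E=\RS\setminus\E_{m,1}(\alpha).
\]
To justify it, note that the right-hand side is harmonic off $\E_{m,1}(\alpha)$, vanishes on $\E_{m,1}(\alpha)$, and behaves like $\log|z|+O(1)$ at $\infty$. Take $\E_0$ to be the finite set $\{0\}\cup Q^{-1}(\{e^{\pm i\alpha}\})$. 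At every $z\in\E_{m,1}(\alpha)\setminus\E_0$ we have $Q'(z)=mz^{m-1}\neq0$, so $Q$ is locally conformal there. It maps the two sides of the arc through $z$ onto the two sides of $\Gamma_\alpha$ at the interior point $Q(z)$. The chain rule then gives
\[
\frac{\partial g_{\Omega_\E}}{\partial n_\pm}(z,\infty)=\tfrac1m |Q'(z)|\,\frac{\partial g_{\Omega_\alpha}}{\partial n_\pm}\bigl(Q(z),\infty\bigr),
\]
with matching sides. Hence \eqref{eq:s-property} holds at $z$ if and only if \eqref{eq:s-property-arc} holds for $\Gamma_\alpha$ at $Q(z)$.

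\emph{Step 2: $\Gamma_\alpha$ fails \eqref{eq:s-property-arc} everywhere.} We could simply invoke \cite[Theorem 1.1]{christiansen-eichinger-rubin23}, since a circular arc is not a segment, but I would give a self-contained computation. The reflection $u\mapsto 1/\bar u$ preserves $\Omega_\alpha$, fixes $\Gamma_\alpha$ pointwise, and swaps $0$ and $\infty$. It therefore gives
\[
g_{\Omega_\alpha}(1/\bar u,\infty)=g_{\Omega_\alpha}(u,0),
\]
and, since a reflection swaps the two sides of the arc, the inner normal derivative of $g(\cdot,\infty)$ equals the outer normal derivative of $g(\cdot,0)$. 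Next, $h(u):=g_{\Omega_\alpha}(u,\infty)-g_{\Omega_\alpha}(u,0)-\log|u|$ has removable singularities at $0$ and $\infty$, is bounded and harmonic in $\Omega_\alpha$, and vanishes on $\Gamma_\alpha$ because $\log|u|=0$ on $\T$. By the maximum principle $h\equiv0$. Consequently
\[
\frac{\partial g_{\Omega_\alpha}}{\partial n_+}(u,\infty)-\frac{\partial g_{\Omega_\alpha}}{\partial n_-}(u,\infty)=\frac{\partial\log|u|}{\partial n_+}=\pm1\neq0
\]
at every interior point $u$ of $\Gamma_\alpha$, where $n_+$ denotes the outward side.

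\emph{Step 3: conclude.} By Step 2, \eqref{eq:s-property} fails at every point of $\E_{m,1}(\alpha)\setminus\E_0$. It remains to exclude other decompositions with a different polar exceptional set $\E_0'$. This is immediate: $\E_{m,1}(\alpha)\setminus(\E_0\cup\E_0')$ is still nonempty, and on it the arcs of any admissible decomposition coincide locally with our analytic arcs, so the normal derivatives are the same.

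The main obstacle is bookkeeping rather than analysis. One must check that $Q$ maps sides to sides consistently near each non-exceptional point, and that the reflection argument correctly identifies which side is which. Both follow from local conformality and the fact that $Q$ is orientation preserving.
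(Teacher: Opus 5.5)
Your proof is correct, and it takes a genuinely different route from the paper.

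\textbf{The paper's route.} It argues globally, through capacity minimization. It invokes Stahl's theory to say that a union of analytic arcs with the $S$-property must be the Chebotarev set of its endpoints. It then uses Kuzmina's theorem that the Chebotarev set of $\{e^{\pm i\alpha},1\}$ lies in the convex hull of these three points, so this set has capacity strictly smaller than $\Cap(\Gamma_\alpha)$. Finally it pulls this set back under $z^m+1$, obtaining a continuum through the same $2m$ endpoints whose capacity is strictly less than $\Cap(\E_{m,1}(\alpha))$.

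\textbf{Your route.} You work locally. Pulling back the Green's function reduces the question to $\Gamma_\alpha$. The reflection $u\mapsto 1/\overline{u}$, combined with $g_{\Omega_\alpha}(u,\infty)=g_{\Omega_\alpha}(u,0)+\log|u|$, shows that the two normal derivatives differ by exactly $1$ at every interior point of $\Gamma_\alpha$. That identity is the same as the relation $b_{\Omega_\alpha}(u,0)=u\,b_{\Omega_\alpha}(u,\infty)$ used in Section~\ref{sec:reciprocal_weight}. It is also consistent with the explicit map $f$ in the proof of Lemma~\ref{lem:logarithmic_integral}: at $u=1$ the outer and inner derivatives are $\frac{1\pm\sin(\alpha/2)}{2\sin(\alpha/2)}$.

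\textbf{What each approach buys.} Your argument is self-contained: it needs neither Stahl's converse characterization nor Kuzmina's theorem. It is also quantitative. The jump on $\E_{m,1}(\alpha)$ equals $|z|^{m-1}\neq 0$, so the $S$-property fails at every non-exceptional point and for every admissible $\E_0$, a quantifier you handle explicitly. The argument also carries over unchanged to $r\neq 1$, taking $Q(z)=(z^m+1)/r$. The paper's argument instead ties the failure of the $S$-property directly to the fact that $\E_{m,1}(\alpha)$ is not a minimal-capacity continuum for its endpoints, which is the conceptual link the section is built on. It also does not depend on the special reflection symmetry of the circle.
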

\begin{proof}
The proof is carried out by showing that $\E_{m,1}(\alpha)$ is \emph{not} the Chebotarev set for the points $\{z:z^m+1\in e^{\pm i\alpha}\}$. 
To see this, let $\mathcal{C}(e^{\pm i\alpha}, 1)$ denote the Chebotarev set for $\{e^{\pm i\alpha}, 1\}$. By \cite[Theorem 1.2]{kuzmina80}, this set is necessarily contained within the convex hull of these points. Since this does not hold for $\Gamma_\alpha$, we conclude that $\Gamma_\alpha\neq \mathcal{C}(e^{\pm i\alpha},1)$, which in turn implies 
\[
\Cap(\Gamma_\alpha)>\Cap\bigl(\mathcal{C}(e^{\pm i\alpha},1)\bigr).
\] 
Furthermore, the set $\{z: z^m+1\in \mathcal{C}(e^{\pm i\alpha}, 1)\}$ forms a continuum (since $\mathcal{C}(e^{\pm i\alpha}, 1)$ contains the critical value of $z^m+1$). Applying \cite[Theorem 5.2.5]{ransford95} twice, we obtain
\[
\Cap\bigl(\{z:z^m+1\in \mathcal{C}(e^{\pm i\alpha},1)\}\bigr) = 
\Cap\bigl(\mathcal{C}(e^{\pm i\alpha},1)\bigr)^{1/m} < 
\Cap(\Gamma_\alpha)^{1/m} = 
\Cap\bigl(\E_{m,1}(\alpha)\bigr).
\]
Thus, \(\E_{m,1}(\alpha)\) is not the minimal capacity set containing \(\{z:z^m+1\in e^{\pm i\alpha}\}\) and consequently does not satisfy the $S$-property.
\end{proof}

Having established that $\E_{m,1}(\alpha)$ lacks the S-property, we now proceed with the main asymptotic calculation for the Widom factors. The argument begins by exploiting the rotational symmetry of the set $\E_{m,r}(\alpha)$. This set is invariant under rotations by $2\pi/m$-radians. As a result, the corresponding Chebyshev polynomial of degree $nm + l$, where $0 \leq l < m$, satisfies the symmetry relation 
\[
T_{nm+l}^{\E_{m,r}(\alpha)}(e^{2\pi i/m}z) = e^{2\pi il/m} T_{nm+l}^{\E_{m,r}(\alpha)}(z).
\]
This implies that $T_{nm+l}^{\E_{m,r}(\alpha)}(z) = z^lQ_n(z^m+1)$, where $Q_n$ is a monic polynomial of degree $n$. By applying the change of variables $z = (r\zeta-1)^{1/m}$, we find that $Q_n(r\zeta) = r^n\tilde{Q}_n(\zeta)$, where $\tilde{Q}_n$ is the monic polynomial of degree $n$ that minimizes the norm on $\Gamma_\alpha$ with respect to the weight function $w_{r,m,l}(\zeta) = |r\zeta-1|^{l/m}$. In other words, $\tilde{Q}_n(\zeta) = T_{n}^{\Gamma_\alpha, w_{r,m,l}}(\zeta)=:T_{n}^{w_{r,m,l}}(\zeta)$, and
\[
\|T_{nm+l}^{\E_{m,r}(\alpha)}\|_{\E_{m,r}(\alpha)} = r^n\|w_{r,m,l}T_n^{w_{r,m,l}}\|_{\Gamma_\alpha}.
\]
The asymptotic behavior of the right-hand side is given by Theorem \ref{thm:main_norm_asymptotics}, which yields
\begin{equation}
	\|w_{r,m,l}T_n^{w_{r,m,l}}\|_{\Gamma_\alpha}\sim 2\cos(\alpha/4)^2\Cap(\Gamma_\alpha)^n\exp\left(\int_{\Gamma_\alpha}\log w_{r,m,l}(\zeta)\,d\mu_{\Gamma_\alpha}(\zeta)\right).
	\label{eq:cheb_on_lemniscatic_arcs_weight_asymptotics}
\end{equation}
To apply this formula, we must evaluate the integral in the exponential term, which is given by the following lemma.
\begin{lemma}
\label{lem:logarithmic_integral}
For $r>0$, the logarithmic potential is given by the identity
\begin{equation}
\label{2.9}
\int_{\Gamma_\alpha}\log |r\zeta-1| \, d\mu_{\Gamma_\alpha}(\zeta) = 
\log\left(\frac{\vert 1-r \vert + \sqrt{1-2r\cos(\alpha)+r^2}}{2}\right).
\end{equation}
\end{lemma}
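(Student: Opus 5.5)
The plan is to reduce the integral to a Green's function value, compute that Green's function explicitly through a Möbius transformation, and treat $r=1$ separately.

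\textbf{Reduction to a Green's function.} Write $\log|r\zeta-1| = \log r + \log|\zeta - 1/r|$. Recall that $\mu_{\Gamma_\alpha}$ is a probability measure with potential
\[
\int\log|z-x|\,d\mu_{\Gamma_\alpha}(x) = g_{\Omega_\alpha}(z,\infty)+\log\Cap(\Gamma_\alpha).
\]
This holds for $z\in\Omega_\alpha$ and, by regularity of the arc, equals $\log\Cap(\Gamma_\alpha)$ on $\Gamma_\alpha$. Hence the left-hand side of \eqref{2.9} equals
\[
\log r+\log\Cap(\Gamma_\alpha)+g_{\Omega_\alpha}(1/r,\infty).
\]
Here $g_{\Omega_\alpha}(1/r,\infty)$ is interpreted as $0$ when $r=1$, since then $1/r=1\in\Gamma_\alpha$. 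I will use the classical value $\Cap(\Gamma_\alpha)=\sin(\alpha/2)$; it can also be read off from the computation below as a by-product.

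\textbf{Case $r=1$.} The formula gives $\log\sin(\alpha/2)$. The right-hand side of \eqref{2.9} is
\[
\log\bigl(\sqrt{2-2\cos\alpha}/2\bigr)=\log\sin(\alpha/2),
\]
so this case is immediate.

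\textbf{Case $r\neq 1$: the Möbius transformation.} For $r\neq1$ the point $1/r$ is real and positive and different from $1$, so it lies off the arc and we need $g_{\Omega_\alpha}(1/r,\infty)$. Use the Möbius map
\[
M(u)=i\,\frac{1-u}{1+u},
\]
which sends $e^{it}$ to $\tan(t/2)$. It carries $\Gamma_\alpha$ onto the interval $[-a,a]$ with $a=\tan(\alpha/2)$, sends $\infty$ to $-i$, and sends $1/r$ to $i\,\frac{r-1}{r+1}$. By conformal invariance,
\[
g_{\Omega_\alpha}(1/r,\infty)=g_{\C\setminus[-a,a]}\Bigl(i\tfrac{r-1}{r+1},\,-i\Bigr).
\]
The Green's function with finite pole for the complement of an interval is explicit. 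With the Joukowski inverse $\varphi(z)=(z+\sqrt{z^2-a^2})/a$, which maps onto the exterior of the unit disk, we have
\[
g(z,p)=\log\left|\frac{1-\overline{\varphi(p)}\varphi(z)}{\varphi(z)-\varphi(p)}\right|.
\]
Both $z$ and $p$ lie on the imaginary axis, and there $\varphi(iy)=i\bigl(y+\sqrt{y^2+a^2}\bigr)/a$, with the branch chosen so that $|\varphi|>1$. Everything is therefore an explicit expression in $y$ and $a$.

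\textbf{Main obstacle.} The main work is the algebraic simplification. Substituting $a=\tan(\alpha/2)$, $y_1=\frac{r-1}{r+1}$ and $y_2=-1$ should collapse $\log r+\log\sin(\alpha/2)+g$ to
\[
\log\Bigl(\bigl(|1-r|+\sqrt{1-2r\cos\alpha+r^2}\bigr)/2\Bigr).
\]
Two details need care here. First, the sign of $y_1$ depends on whether $r>1$ or $r<1$, and this is what produces the absolute value $|1-r|$. Second, the square root $\sqrt{y_1^2+a^2}$ should be rewritten via
\[
(r-1)^2+(r+1)^2\tan^2(\alpha/2)=\frac{1-2r\cos\alpha+r^2}{\cos^2(\alpha/2)}.
\]
The same computation with the pole alone yields $\Cap(\Gamma_\alpha)=\sin(\alpha/2)$.

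\textbf{Consistency checks.} As $r\to0$ both sides of \eqref{2.9} tend to $0$. As $r\to1$ both sides tend to $\log\sin(\alpha/2)$; for the left-hand side this follows from continuity of the equilibrium potential. These checks guard against branch errors in the simplification.
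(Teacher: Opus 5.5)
Your proof is correct, and its skeleton matches the paper's. Both write the integral as $\log r+\log\Cap(\Gamma_\alpha)+g_{\Omega_\alpha}(1/r,\infty)$ and handle $r=1$ by Frostman's theorem. The only difference is how the Green's function is evaluated.

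The paper uses the exterior Riemann map $f(z)=\frac12\bigl(z-1+\sqrt{(z-e^{i\alpha})(z-e^{-i\alpha})}\bigr)$, normalized by $f(z)=z+O(1)$, which sends $\Omega_\alpha$ onto $\{|w|>\sin(\alpha/2)\}$. This gives $\Cap(\Gamma_\alpha)$ and $g_{\Omega_\alpha}(\cdot,\infty)=\log\bigl(|f|/\sin(\alpha/2)\bigr)$ at once. One then reads off $|f(1/r)|=\bigl(|1-r|+D\bigr)/(2r)$, with $D:=\sqrt{1-2r\cos\alpha+r^2}$, in one line. The absolute value comes from the sign of the square root on the two sides of $z=1$ on the real axis.

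Your M\"obius map moves the pole to the finite point $-i$. You therefore need the finite-pole Green's function of the interval and more branch bookkeeping, but the computation does close. One has $\varphi(-i)=-ic$ with $c=\cot(\alpha/4)=(1+\cos(\alpha/2))/\sin(\alpha/2)$. Setting $t=\bigl(|r-1|\cos(\alpha/2)+D\bigr)/\bigl((r+1)\sin(\alpha/2)\bigr)$, one gets $g=\log\frac{1+ct}{t+c}$ for $r>1$ and $g=\log\frac{ct-1}{c-t}$ for $r<1$. Using $\sin^2(\alpha/2)=(1-\cos(\alpha/2))(1+\cos(\alpha/2))$ and $D^2=(r+1)^2-4r\cos^2(\alpha/2)$, both reduce to $\log\frac{|1-r|+D}{2r\sin(\alpha/2)}$, as required.

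This simplification is the whole content of the case $r\neq 1$, so you should write it out rather than say it ``should collapse.'' A minor point: the image domain is $\overline{\C}\setminus[-a,a]$, not $\C\setminus[-a,a]$, since $M(-1)=\infty$.

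Your route needs only the textbook Green's function of an interval. The paper's route buys a much shorter evaluation by choosing a map adapted to the pole at infinity.
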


\begin{proof}
%
The case $r=1$ is a direct consequence of Frostman's theorem \cite[Theorem 3.3.4]{ransford95}, noting that $\Cap(\Gamma_\alpha) = \sin(\alpha/2)$. 

For $r\neq 1$, the integral equals $h_r(\infty)$, where $h_r$ is the solution to the Dirichlet problem on $\Omega_\alpha = \overline{\C}\setminus\Gamma_\alpha$ with boundary data $\log|r\zeta-1|$. From potential theory, 
we have
\begin{equation}
\label{eq:hr_infinity_green_concise}
h_r(\infty) = \log r + \log\Cap(\Gamma_\alpha) + g_{\Omega_\alpha}(1/r,\infty),
\end{equation}
where $g_{\Omega_\alpha}(z,\infty)$ denotes the Green's function for $\Omega_\alpha$ with pole at infinity. Recalling that the function
\[
f(z) = \frac{1}{2} \Bigl( z-1+\sqrt{ (z-e^{i\alpha})(z-e^{-i\alpha}) } \,\Bigr),
\]
with the branch choice of $\sqrt{\cdot}$ such that $f(z) = z + O(1)$ as $z\to\infty$, maps $\Omega_\alpha$ conformally onto $\{w : |w| > \sin(\alpha/2)\}$ (cf. \cite[Exercise 5.2.4]{ransford95}), the Green's function is given by $g_{\Omega_\alpha}(z, \infty) = \log\left({|f(z)|}/{\sin(\alpha/2)}\right)$.
Evaluating at $z=1/r$ yields
\begin{equation}
\label{2.10}
g_{\Omega_\alpha}(1/r, \infty) 
= \log\left( \frac{\vert 1-r \vert + \sqrt{1-2r\cos(\alpha)+r^2}}{2r\sin(\alpha/2)} \right).
\end{equation}
Substituting this and $\Cap(\Gamma_\alpha)=\sin(\alpha/2)$ into \eqref{eq:hr_infinity_green_concise} 
proves the identity stated in the lemma. The continuity of the expression at $r=1$ confirms consistency.
%
\end{proof}

It follows directly from \eqref{2.10} that the quantity $c(r,\alpha)$ defined in \eqref{eq:c_r_alpha_definition} is nothing but the exponential of $g_{\Omega_\alpha}(1/r, \infty)$. 
We now have all the necessary components to complete the proof of Theorem \ref{thm:Widom_factors_lemniscatic_arcs}.
Using the capacity formula $\Cap(\E_{m,r}(\alpha)) = (r\Cap(\Gamma_\alpha))^{1/m}$, we have
\[
	\cW_{nm+l}\bigl(\E_{m,r}(\alpha)\bigr) = 
	\frac{\|T_{nm+l}^{\E_{m,r}(\alpha)}\|_{\E_{m,r}(\alpha)}}{\Cap\bigl(\E_{m,r}(\alpha)\bigr)^{nm+l}} = 
	\frac{r^n\|w_{r,m,l}T_{n}^{w_{r,m,l}}\|_{\Gamma_\alpha}}{\bigl(r\Cap(\Gamma_\alpha)\bigr)^{(nm+l)/m}}.
\]
Substituting the asymptotic formula \eqref{eq:cheb_on_lemniscatic_arcs_weight_asymptotics} and the result of Lemma \ref{lem:logarithmic_integral}, and taking the limit as $n\to\infty$, we get
\begin{align*}
\lim_{n\to\infty} \cW_{nm+l}\bigl(\E_{m,r}(\alpha)\bigr) &= \lim_{n\to\infty} \frac{r^n \cdot 2\cos^2(\alpha/4)\Cap(\Gamma_\alpha)^n \exp\left(\frac{l}{m}\int_{\Gamma_\alpha}\log|r\zeta-1|\,d\mu_{\Gamma_\alpha}(\zeta)\right)}{r^{n+l/m}\Cap(\Gamma_\alpha)^{n+l/m}} \\
&= \frac{2\cos^2(\alpha/4)}{r^{l/m}\Cap(\Gamma_\alpha)^{l/m}} \bigl( r \sin(\alpha/2)\,c(r,\alpha) \bigr)^{l/m}.
\end{align*}
Since $\Cap(\Gamma_\alpha) = \sin(\alpha/2)$, the expression simplifies to $2\cos^2(\alpha/4)c(r,\alpha)^{l/m}$,
which is the statement of the theorem. \qed

\subsection{Analysis of the Limit Points}
It is illuminating to examine the range of the limit points for different values of $r$. The key is to understand the behavior of the function $r\mapsto c(r,\alpha)$ for a fixed $\alpha\in (0,\pi)$. Since $c(r,\alpha)>0$, the sign of $\partial_rc(r,\alpha)$ is determined by its logarithmic derivative. A calculation shows that the derivative vanishes only if $r \in \{0,1\}$ or $\alpha = \pi k$ for $k\in\Z$. In our setting, the derivative is continuous and does not vanish on $(0,1)$ or $(1,\infty)$, so it must have a constant sign on each of these intervals. We conclude that $\partial_r c(r,\alpha)$ is negative for $0<r<1$ and positive for $r>1$. 
Moreover, since 
\[
   \lim_{r\rightarrow 0}c(r,\alpha) = \infty
   \quad \mbox{and} \quad
   \lim_{r\rightarrow \infty}c(r,\alpha) = \frac{1}{\sin(\alpha/2)},
\] 
it follows that $c(\cdot,\alpha)$ maps $(0,1)$ onto $(1,\infty)$ and $(1,\infty)$ onto $\bigl(1, \csc(\alpha/2)\bigr)$. Consequently, for $0\leq l<m$, the limit points of the Widom factors lie in the ranges:
\[
\lim_{n\rightarrow \infty}\cW_{nm+l}\bigl(\E_{m,r}(\alpha)\bigr) \in \begin{cases}
	[2\cos^2(\alpha/4), 2\cos^2(\alpha/4)\csc^{\frac{l}{m}}(\alpha/2)], & r> 1, \vspace{.3cm}\\
	\qquad\qquad\quad [2\cos^2(\alpha/4),\infty), & 0<r<1.
\end{cases}
\]

A key consequence of this analysis is the distinction in convergence behavior between the connected and disconnected cases. When $r=1$, we have $c(1,\alpha)=1$, which makes the limit in Theorem \ref{thm:Widom_factors_lemniscatic_arcs} independent of the subsequence index $l$. This implies that the full sequence $\{\cW_n(\E_{m,1}(\alpha))\}$ converges. In contrast, for $r\neq 1$, the limit depends on the residue $l \equiv n \pmod m$. The full sequence therefore does not converge; instead, it possesses $m$ distinct limit points. This type of behavior, often called limit periodicity, is characteristic of symmetric systems where each disconnected component carries the same harmonic measure as viewed from infinity. 

As a final observation, Theorem \ref{thm:Widom_factors_lemniscatic_arcs} shows that for the connected case, the limit of $\cW_n(\E_{m,1}(\alpha))$ always lies within the interval $(1,2)$. As $\alpha\rightarrow 0$, the set $\E_{m,1}(\alpha)$ increasingly resembles a set of intersecting straight lines, and correspondingly, the limit of its Widom factors approaches $2$. This behavior should be compared with the results of \cite[Theorem 1.2]{christiansen-eichinger-rubin23}.

\section{Reciprocals of polynomials as weight functions}
\label{sec:reciprocal_weight}

We now turn to the analysis of \( R_{n}^{\Gamma_\alpha,w}(u,u_0) \) for rational weights \( w \), with the goal of determining its locally uniform asymptotic behavior on \( \Omega_\alpha = \overline{\mathbb{C}} \setminus \Gamma_\alpha \).
Given a proper subdomain $\Omega$ of $\overline{\C}$ and a point $\zeta\in\Omega$, we define
\begin{equation}
   b_{\Omega}(\cdot, \zeta) := \exp\Bigl[-\bigl(g_{\Omega}(\cdot, \zeta) 
   + i \widetilde{g}_{\Omega}(\cdot, \zeta)\bigr)\Bigr],
\end{equation}
where \( \widetilde{g}_{\Omega}(\cdot, \zeta) \) is a harmonic conjugate of \( g_{\Omega}(\cdot, \zeta) \). In general, this function is multi-valued, analytic, and character-automorphic on $\Omega$. However, if $\Omega$ is simply connected, it defines a Riemann map from $\Omega$ onto the unit disk $\D$, sending $\zeta$ to $0$. This map is unique up to multiplication by a unimodular constant, and we shall typically fix normalization by requiring that $b_\Omega(z, \zeta)>0$ at a designated point $z\in\Omega$. 

Specializing to $\Gamma_\alpha$, we investigate the asymptotic behavior of
$b_{\Omega_\alpha}(u, \infty)^n R_{n}^{\Gamma_\alpha, w}(u, u_0)$. To ensure a consistent normalization, we impose the condition 
\begin{equation}
\label{normal}
\begin{cases}
\displaystyle{
   \; \quad b_{\Omega_\alpha}(u_0, \infty) > 0} \; \mbox{ if $u_0\neq \infty$, and } \smallskip \\ 
\displaystyle{
   \, \lim_{u\to\infty} ub_{\Omega_\alpha}(u, \infty)>0 \; \mbox{ when } u_0=\infty}.
\end{cases}
\end{equation}
With this normalization, the limit in \eqref{normal}, which can be interpreted as the derivative of $b_{\Omega_\alpha}(u, \infty)$ at $u=\infty$, corresponds exactly to the capacity of $\Gamma_\alpha$. Furthermore, near $\infty$ we have the asymptotic expression
\begin{equation}
\label{near inf}
b_{\Omega_\alpha}(u, \infty)=\frac{\Cap(\Gamma_\alpha)}{u}+\mathcal{O}(\vert u\vert^{-2}).
\end{equation}

\subsection{Residual polynomials relative to the origin}
\label{subsec:res_origin}
We begin by considering the case $u_0 =0$. A key step in our analysis is transferring the space of rational functions associated with $\Omega_\alpha $ to those associated with 
\begin{equation}
	\Omega_0 := \RS \setminus \sfA_0, \quad \sfA_0 = \bigl( \R \cup \{ \infty \} \bigr) \setminus (-1,1).
\end{equation}
This transformation will allow us to apply fundamental results by Yuditskii \cite{yuditskii99}. 
We denote the variable corresponding to $\Omega_\alpha$ by $u$ and the variable associated with $\Omega_0$ by $z$. 

The connection between these domains is established through the following change of variables. Let $u:\Omega_0\rightarrow \Omega_\alpha$ be the M\"{o}bius transformation
\begin{equation}
	u(z) = \frac{z-z_0}{z-\overline{z}_0},\quad z_0 := i\tan(\alpha/2).
	\label{eq:mobius_transformation}
\end{equation}
This 
conformal mapping enables us to pass from $z\in \Omega_0$ to $u:=u(z)\in \Omega_\alpha$ while ensuring that $u(\pm 1) = e^{\mp i\alpha}$. Moreover, since $u(\overline{z}_0) = \infty$, we introduce the notation $z_\infty := \overline{z}_0$ to emphasize this property. 
\begin{center}
\begin{tikzpicture}[scale=1.7]

\begin{scope}
    \draw[->] (-2,0) -- (2.5,0) node[right] {$\text{Re}(z)$};
    \draw[->] (0,-1.5) -- (0,1.5) node[above] {$\text{Im}(z)$};
    
    \draw[dashed, very thick] (-1,0) -- (1,0);
    \draw[very thick] (-2.5,0) -- (-1,0);
    \draw[very thick] (1,0) -- (2.5,0);

    \draw[thick,fill=white] (-1,0) circle(0.05);
    \draw[thick,fill=white] (1,0) circle(0.05);

    \node[below] at (-1,-0.1) {$-1$};
    \node[below] at (1,-0.1) {$1$};
    \draw[thick,fill=black] (0,0.5) circle(0.03) node[right] {$z_0 = i\tan\frac{\alpha}{2}$};
    \node at (1,1.4) {$\Omega_0$};
\end{scope}

\begin{scope}[xshift=5cm]
    \draw[dashed, very thick] (1,0) arc[start angle=0, end angle=360, radius=1];
    \draw[very thick] (1,0) arc[start angle=0, end angle=30, radius=1];
    \draw[very thick] (1,0) arc[start angle=0, end angle=-30, radius=1];

    \draw[->] (-1.5,0) -- (1.5,0) node[right] {$\text{Re}(u)$};
    \draw[->] (0,-1.5) -- (0,1.5) node[above] {$\text{Im}(u)$};
	\draw[thick,fill=white] (0.866,0.5) circle(0.05)node[above right] {$e^{i\alpha}$};
    \draw[thick,fill=white] (0.866,-0.5) circle(0.05)node[below right] {$e^{-i\alpha}$};
    \draw[thick,fill=black] (0,0) circle(0.03) node[above left] {$u(z_0)$};

    \node[below right] at (1,0) {$1$};
	
    \node at (1,1.4) {$\Omega_\alpha$};
\end{scope}

\draw[->, thick] (2,0.5) to[out=30, in=150] (4,0.5);

\node[above] at (2.9,0.9) {$u(z)$};
\end{tikzpicture}
\end{center}

When $z,\zeta\in \Omega_0$, the function $u$ satisfies
\begin{equation}
		u(z)-u(\zeta) = \frac{z_0-z_\infty}{\zeta-z_\infty}\frac{z-\zeta}{z-z_\infty}.
		\label{eq:change_of_variables}
\end{equation}
Thus, if $H:\Omega_\alpha\rightarrow \RS$ is a rational function in the variable $u$, then $H(u(z))$ remains a rational function on $\Omega_0$. Specifically, if $H(u) = \prod_{j=1}^{n}(u-u(z_j))^{k_j}$ for some $k_j\in \Z$ and $z_j\in\Omega_0$, then
\begin{equation}
	H\bigl(u(z)\bigr)=c (z-z_\infty)^{-\sum_{j=1}^{n}k_j} \prod_{j=1}^{n}(z-z_j)^{k_j}
	\label{eq:rational_funct_transform}
\end{equation}
for some explicitly computable constant $c$.
This shows that the change of variables $z\mapsto u(z)$ preserves rational functions. 
Our motivation for using this transformation comes from the theory of residual polynomials relative to $\Omega_0$, as developed in \cite{yuditskii99}, which we can now apply.

We will initially restrict our study to weight functions $w$ of the form
\begin{equation}
	w(u) = e^{i\beta}\Biggl[\,\prod_{j=1}^{k}(u-u_j)\Biggr]^{-1},
	\label{eq:weight_function_form}
\end{equation}
where $|u_j|> 1$ and $\beta\in [0,2\pi)$ is chosen so that $w(0)>0$. These functions generally take complex values on $\Gamma_\alpha$ and, strictly speaking, do not qualify as weights. However, since \[\|Pw\|_{\Gamma_\alpha} = \|P|w|\|_{\Gamma_\alpha},\] we use the notation $R_n^{\Gamma_\alpha,w}$ to refer to $R_n^{\Gamma_\alpha, |w|}$. This convention will be adopted throughout without further clarification.

Every $u$ with $|u|>1$ corresponds to a point in the lower half-plane via the mapping $z\mapsto u(z)$. Thus, for each $u_j$ in \eqref{eq:weight_function_form}, there exists $z_j$ with $\Im z_j>0$ such that $u_j = u(\overline{z}_j)$. 
For convenience, we introduce the weight function
\begin{equation}
	W_n(z) = e^{i\theta}(z-z_\infty)^{k-n}\bigl[(z-\overline{z}_1)\cdots (z-\overline{z}_k)\bigr]^{-1},
	\label{eq:transfered_weight}
\end{equation}
where $\theta\in [0,2\pi)$ is chosen so that $W_n(z_0)>0$. When $P \in \cP_n$, it follows from \eqref{eq:rational_funct_transform} that
\[ Q(z) := P\bigl(u(z)\bigr)\frac{w\bigl(u(z)\bigr)}{W_n(z)} \]
defines a polynomial $Q \in \cP_n$. Moreover, this correspondence establishes a bijection between $\cP_n$ and itself, relating the weighted polynomial $Pw$ on $\Gamma_\alpha$ to $QW_n$ on $\sfA_0$.

To study residual polynomials associated with $\sfA_0$ and the weight function $W_n$, we note that the non-compactness of $\sfA_0$ poses challenges in applying classical results. However, if 
\[Q(z) = a_nz^{n}+\text{lower order terms},\]
then
\[\lim_{z\rightarrow \infty}Q(z)W_n(z) = a_ne^{i\theta},\]
ensuring that $QW_n$ extends continuously to $\sfA_0$. A slight modification of the proof of \cite[Theorem 2.1]{christiansen-simon-zinchenko-V} therefore establishes the existence of a unique polynomial $R_n^{\sfA_0,W_n}(\cdot,z_0)$ of degree at most $n$ satisfying
\begin{align*} 
   (i)& \quad 
    \|R_n^{\sfA_0,W_n}(\cdot,z_0)W_n(\cdot)\|_{\sfA_0} \leq 1, 
   \mbox{ and }  \\
   (ii)& \quad R_n^{\sfA_0,W_n}(z,z_0)= \sup\bigl\{|Q(z_0)|: Q\in \cP_n,\, \left\|QW_n\right\|_{\sfA_0}\leq 1\bigr\}.
\end{align*}
The extremal problems on 
$\Gamma_\alpha$ and $\sfA_0$ are related through the following result; see also \cite[Lemma 2.1]{eichinger17}.
\begin{lemma}
\label{lem:identification}
	Let $z_0 = i\tan(\alpha/2)$ and define $u:\Omega_0\rightarrow \Omega_\alpha$ as in \eqref{eq:mobius_transformation}. Then, for all $z\in \Omega_0$,
	\begin{equation}
	   R_{n}^{\Gamma_\alpha, w}\bigl(u(z), 0\bigr)w\bigl(u(z)\bigr)= R_{n}^{\sfA_0, W_n}(z,z_0)W_n(z). 
	\end{equation}	
\end{lemma}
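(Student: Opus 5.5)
The plan is to show that the correspondence $P\mapsto Q$, with $Q(z)=P(u(z))\,w(u(z))/W_n(z)$, maps the extremal problem on $\Gamma_\alpha$ at $u_0=0$ onto the extremal problem on $\sfA_0$ at $z_0$. Uniqueness of residual polynomials then gives the identity.

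\textbf{Step 1: the map is a linear bijection of $\cP_n$.} For $P\in\cP_n$, write $P(u)=c\prod_{i\le n}(u-u(\zeta_i))$, with the convention that $\zeta_i=z_\infty$ when the degree drops. By \eqref{eq:rational_funct_transform} and \eqref{eq:change_of_variables}, $P(u(z))$ equals $(z-z_\infty)^{-n}$ times a polynomial of degree at most $n$. Similarly, $w(u(z))$ equals a constant times $(z-z_\infty)^{k}\prod_j(z-\overline z_j)^{-1}$. Dividing by $W_n(z)$ cancels every factor except a polynomial of degree at most $n$. The map is clearly linear. It is injective because $u$ is a bijection and $w\neq0$. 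Since $\cP_n$ is finite dimensional, it is therefore bijective.

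\textbf{Step 2: the two optimisation problems coincide.} The Möbius map $u$ sends $\sfA_0$ onto $\Gamma_\alpha$: the endpoints go to endpoints, and $\infty\mapsto 1$. Hence
\[
\|QW_n\|_{\sfA_0}=\|(P w)\circ u\|_{\sfA_0}=\|Pw\|_{\Gamma_\alpha}=\|P|w|\|_{\Gamma_\alpha}.
\]
At $\infty\in\sfA_0$ this equality is understood as the continuous extension noted above. Also, $u(z_0)=0$, so
\[
Q(z_0)=P(0)\,w(0)/W_n(z_0),
\]
and $w(0)/W_n(z_0)$ is a fixed positive constant. Call it $\kappa$.

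\textbf{Step 3: conclude by uniqueness.} By Steps 1 and 2, the suprema in (ii) satisfy
\[
R_n^{\sfA_0,W_n}(z_0,z_0)=\kappa\,R_n^{\Gamma_\alpha,w}(0,0).
\]
Let $Q^*$ be the image of $R_n^{\Gamma_\alpha,w}(\cdot,0)$. Then $Q^*$ satisfies $\|Q^*W_n\|_{\sfA_0}\le1$ and $Q^*(z_0)$ attains the supremum with a positive value. By uniqueness, $Q^*=R_n^{\sfA_0,W_n}(\cdot,z_0)$. Multiplying by $W_n(z)$ gives exactly the claimed identity.

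\textbf{Main obstacle.} The delicate point is Step 1, namely the degree bookkeeping. One has to check that the power $(z-z_\infty)^{k-n}$ in $W_n$ exactly absorbs the pole at $z_\infty$ produced by $P\circ u$ and $w\circ u$, including the case $\deg P<n$. One also needs the phase $e^{i\theta}$ to be consistent with $w(0)>0$, which is what makes $\kappa$ positive. A secondary point is that $\sfA_0$ is noncompact, so the norm on $\sfA_0$ must be understood via the continuous extension of $QW_n$ to $\infty$. This extension is compatible with $u(\infty)=1\in\Gamma_\alpha$.
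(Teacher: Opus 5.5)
Your proposal is correct and follows the same route as the paper. The paper's proof consists of exactly these ingredients: the bijection $P\mapsto Q=(P\circ u)(w\circ u)/W_n$ of $\cP_n$, the identity $\|Pw\|_{\Gamma_\alpha}=\|QW_n\|_{\sfA_0}$, the relation $P(0)=Q(z_0)W_n(z_0)/w(0)$, and uniqueness of residual polynomials together with $W_n(z_0)>0$ and $w(0)>0$. Your write-up only adds the degree bookkeeping that the paper leaves implicit. (Your factorization convention ``$\zeta_i=z_\infty$ when the degree drops'' is loose, since $u(z_\infty)=\infty$ and a root at $u=1$ corresponds to $\zeta=\infty$; expanding $P(u(z))=(z-z_\infty)^{-n}\sum_j a_j(z-z_0)^j(z-z_\infty)^{n-j}$ gives the same conclusion directly.)
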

\begin{proof}
	As noted above, the mapping
	\[\cP_n\ni P\mapsto Q(z):=P\bigl(u(z)\bigr)\frac{w\bigl(u(z)\bigr)}{W_n(z)}\in \cP_n\] 
	is a bijection. Furthermore, we have	
	\[\|Pw\|_{\Gamma_\alpha} = \|QW_n\|_{\sfA_0}\]
	and
	\[P(0) = Q(z_0)W_n(z_0)/w(0).\]
	The result now follows from the uniqueness of residual polynomials, together with the facts that $W_n(z_0)>0$ and $w(0)>0$.
\end{proof}

Instrumental to our approach is the fact that $R_{n}^{\sfA_0,W_n}(\cdot, z_0)$ has an explicit representation, as is detailed and proven in \cite{yuditskii99}. A concise formulation of this result is given in \cite[Theorem 2.2]{eichinger17}. There is a unique point $x_n\in (0,1)$ such that, upon defining 
\begin{equation}
\label{I_n}
   I_n := [-x_n,x_n], \quad \Omega_n := \Omega_0\setminus I_n, 
\end{equation}   
the harmonic measure satisfies the identity
\begin{equation}
	(n-k+1)\omega(z_\infty,I_n;\Omega_n)+\sum_{j=1}^{k}\omega(\overline{z}_j,I_n;\Omega_n) = 1.
	\label{eq:sum_harmonic_measures}
\end{equation} 
For this choice of \(x_n\), the function $R_n^{\sfA_0,W_n}(\cdot, z_0)W_n(\cdot)$ admits the representation
\begin{equation} 
	R_n^{\sfA_0,W_n}(z,z_0)W_n(z)=
	\frac{1+s_n(z)}{2B_n^{\Omega_n}(z)}+
	\frac{1-s_n(z)}{2}
	B_n^{\Omega_n}(z) \left(\frac{z-z_0}{z-z_\infty}\right)^{n-k+1} 
	\prod_{j=1}^{k}\frac{z-z_j}{z-\overline{z}_j},
	\label{eq:extremal_solution}
\end{equation}
where
\begin{equation}
	B_n^{\Omega_n}(z) := b_{\Omega_n}(z,z_\infty)^{n-k+1}\prod_{j=1}^{k}b_{\Omega_n}(z,\overline{z} _j)
	\label{eq:blaschke_factors}
\end{equation}
and
\begin{equation}
	s_n(z) := \sqrt{\frac{z_0^2-x_n^2}{z_0^2-1}\frac{z^2-1}{z^2-x_n^2}}.
	\label{eq:s_n_def}
\end{equation}
We impose the normalization conditions $B_{n}^{\Omega_n}(z_0)>0$ and $s_n(z_0) = 1$. 
Although the individual factors in \eqref{eq:blaschke_factors} are not necessarily single-valued, their product is. This property is intrinsically linked to \eqref{eq:sum_harmonic_measures}, as will be explained in the proof of Lemma \ref{Bn}.
To ensure that the square root in \eqref{eq:s_n_def} is single-valued, we introduce branch cuts along $\sfA_0$ and $I_n$. With this choice, the following identities hold:
\[
   \sqrt{\overline{z}^2-1}=-\overline{\sqrt{z^2-1}}
   \; \mbox{ and } \;
   \sqrt{\overline{z}^2-x^2_n}=\overline{\sqrt{z^2-x_n^2}}.
\]
It follows that $s_n(\overline{z})=-\overline{s_n(z)}$ and, in particular, $s_n(z_\infty)=-1$.

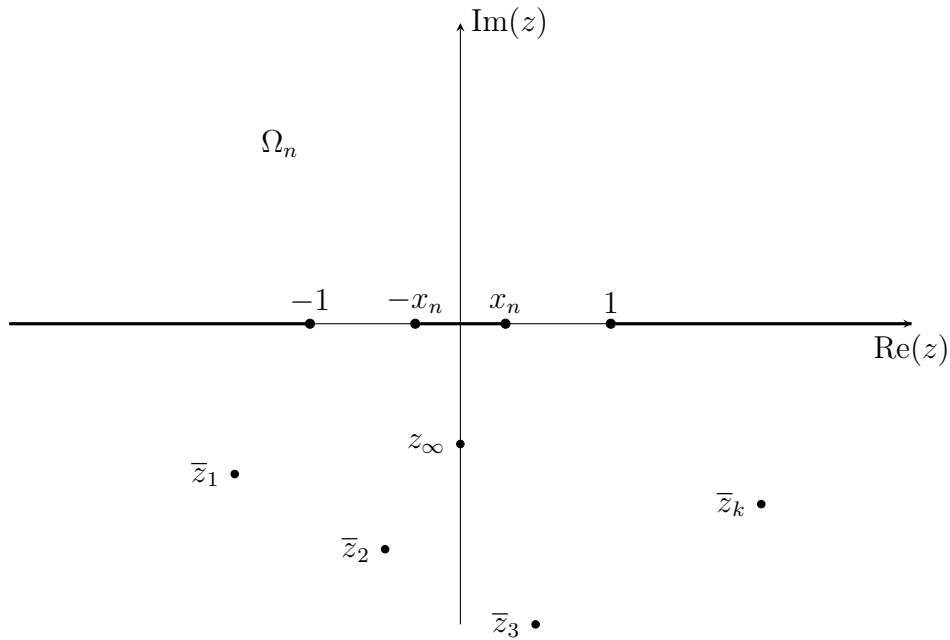
\begin{figure}[h!]
	\centering	
	\begin{tikzpicture}[scale=2, >=stealth]

    \draw[->] (-3, 0) -- (3, 0) node[anchor=north] {Re($z$)};
    \draw[->] (0, -2) -- (0, 2) node[anchor=west] {Im($z$)};
     \node at (-1.2,1.2) {$\Omega_n$};
    
    \draw[very thick, black] (-2.999, 0) -- (-1, 0) ;
    \draw[very thick, black] (-0.3, 0) -- (0.3, 0) ;
    \draw[very thick, black] (1, 0) -- (2.999, 0);
    \draw[fill=black] (-1, 0) circle (0.03cm) node[anchor=south] {$-1$};
    \draw[fill=black] (1, 0) circle (0.03cm) node[anchor=south] {$1$};
    
    \draw[fill=black] (-0.3, 0) circle (0.03cm) node[anchor=south] {$-x_n$};
    \draw[fill=black] (0.3, 0) circle (0.03cm) node[anchor=south] {$x_n$};

    \node[draw, circle, fill=black, inner sep=1pt, label=left:$\overline{z}_1$] at (-1.5, -1) {};
    \node[draw, circle, fill=black, inner sep=1pt, label=left:$\overline{z}_2$] at (-0.5, -1.5) {};
    \node[draw, circle, fill=black, inner sep=1pt, label=left:$\overline{z}_3$] at (0.5, -2) {};
    \node[draw, circle, fill=black, inner sep=1pt, label=left:$\overline{z}_k$] at (2, -1.2) {};
    
     \node[draw, circle, fill=black, inner sep=1pt, label=left:$z_\infty$] at (0, -0.8) {};
    
%
    
	\end{tikzpicture}
	\caption{The domain $\Omega_n$ with designated points $\overline{z}_j$.}
\end{figure}
\begin{lemma} \label{Bn}
The function $B_n^{\Omega_n}$, as defined in \eqref{eq:blaschke_factors}, is single-valued in $\Omega_n$. Moreover, the right-hand side of \eqref{eq:extremal_solution}, when divided by $W_n(z)$, extends from an analytic function on $\Omega_n$ to a polynomial.
\end{lemma}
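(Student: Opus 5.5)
Throughout I take $n\ge k$, so that $n-k+1\ge 1$. The plan has two parts: first show that $B_n^{\Omega_n}$ is single-valued, then show that the right-hand side of \eqref{eq:extremal_solution}, divided by $W_n$, is a polynomial.

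\textbf{Single-valuedness.} The domain $\Omega_n = \RS\setminus(\sfA_0\cup I_n)$ is doubly connected; its boundary components are $\sfA_0$ and $I_n$. For $\zeta\in\Omega_n$, write $b_{\Omega_n}(z,\zeta)=\exp[-(g+i\widetilde g)]$. The only obstruction to single-valuedness is the period of $\widetilde g_{\Omega_n}(\cdot,\zeta)$ along a closed curve $\gamma$ separating $I_n$ from $\sfA_0$. By Green's formula this period equals
\[
\oint_\gamma \frac{\partial g_{\Omega_n}(\cdot,\zeta)}{\partial n}\,ds = 2\pi\,\omega(\zeta,I_n;\Omega_n),
\]
up to a sign fixed by orientation. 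Indeed, the flux of the Green's function through a curve surrounding one boundary component is $2\pi$ times the harmonic measure of that component seen from the pole; one way to see this is to apply Green's identity to $g$ and the harmonic measure $\omega(\cdot,I_n;\Omega_n)$. Consequently, the multiplicative period of $B_n^{\Omega_n}$ along $\gamma$ is
\[
\exp\Bigl(-2\pi i\Bigl[(n-k+1)\,\omega(z_\infty,I_n;\Omega_n)+\sum_{j}\omega(\overline z_j,I_n;\Omega_n)\Bigr]\Bigr)=e^{-2\pi i}=1,
\]
by \eqref{eq:sum_harmonic_measures}. Since the fundamental group of $\Omega_n$ is generated by $\gamma$, $B_n^{\Omega_n}$ is single-valued. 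The same computation shows that $\widetilde g$ picks up no period around the points $\overline z_j$, $z_\infty$: there $\exp(-(g+i\widetilde g))$ simply has a zero.

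\textbf{Extension to a polynomial.} Call $F$ the right-hand side of \eqref{eq:extremal_solution}, and set $G(z):=\bigl(\frac{z-z_0}{z-z_\infty}\bigr)^{n-k+1}\prod_j\frac{z-z_j}{z-\overline z_j}$. I would carry out four steps in order.
\begin{enumerate}[(a)]
\item $F$ is analytic in $\Omega_n$ away from $z_\infty$ and the $\overline z_j$, because $s_n$ is analytic in $\Omega_n$.
\item At the points $z_\infty$ and $\overline z_j$, the singularity of $1/B_n^{\Omega_n}$ is cancelled by the factor $1+s_n$ only at $z_\infty$, since $s_n(z_\infty)=-1$. In general $F$ has poles of order $n-k+1$ at $z_\infty$ and of order $1$ at $\overline z_j$, and these are exactly the poles of $1/W_n$'s reciprocal, namely the zeros of $W_n^{-1}$'s denominator. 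Since $1/W_n = e^{-i\theta}(z-z_\infty)^{n-k}\prod(z-\overline z_j)$, the quotient $F/W_n$ is analytic there, provided the pole at $z_\infty$ has order at most $n-k$. The zero of $1+s_n$ at $z_\infty$ lowers the order by one, which is precisely the bookkeeping needed. The term $B_n^{\Omega_n}G$ is harmless: the poles of $G$ are cancelled by the zeros of $B_n^{\Omega_n}$.
\item Show that $F$ extends continuously across both cuts, by a reflection argument. On $\sfA_0\cup I_n$ we have $|B_n^{\Omega_n}|=1$, and the boundary values from the two sides satisfy $B_+=\overline{B_-}$ up to a unimodular factor. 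Also $|G|=1$ on $\R$, since $z_0$ and $z_\infty$ are conjugate and so are $z_j,\overline z_j$. Moreover $s_n^+=-s_n^-$ on the cuts. Combining these, the two terms of $F$ swap across each cut, so the jumps of $F$ vanish. Morera's theorem then gives analyticity across the open cuts. At the endpoints $\pm1,\pm x_n$ the function is bounded, so the singularities are removable.
\item Conclude with Liouville's theorem: $F/W_n$ is entire and has polynomial growth of order at most $n$ at $\infty$, where $B_n^{\Omega_n}$ is unimodular-bounded. Hence it is a polynomial of degree at most $n$.
\end{enumerate}

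\textbf{Main obstacle.} The hard part is step (c): verifying the precise boundary relation between $B_n^{\Omega_n}$ and $G$ on each cut. On $\sfA_0$ versus $I_n$ the phase of $B_n^{\Omega_n}$ may differ by a constant. I would try to establish the identity $B_+B_- = G$ (equivalently $B_+ = G/B_-$) on both cuts directly. To do this I would compare $\log|B|$ with $\log|G|$ and use that $G$ is exactly the product of the half-plane Blaschke factors with the same zeros. Here \eqref{eq:sum_harmonic_measures} should again guarantee that the constant phase discrepancy is trivial.
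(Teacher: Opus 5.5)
Your outline follows the paper's proof: single-valuedness from the periods $2\pi\,\omega(\zeta,I_n;\Omega_n)$ and \eqref{eq:sum_harmonic_measures}, cancellation of the poles at $z_\infty,\overline z_j$ by $1/W_n$, continuity across the cuts, and Liouville. Your use of $s_n(z_\infty)=-1$ in the pole count is exactly right; the paper leaves it implicit.

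The gap is step (c), which carries the whole argument. You leave it open, and the identity you aim for is the wrong one. Write $s_\pm,B_\pm$ for boundary values and use $s_+=-s_-$ on a cut. The two terms of $F$ trade places exactly when $1/B_+=B_-G$, i.e.\ $B_+B_-G=1$; since $|B_\pm|=1$, this is the same as $B_+G=\overline{B_-}$. Your $B_+B_-=G$ would leave a jump unless $G^2=1$.

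The missing idea is to derive the boundary relation from a global identity in $\Omega_n$, using the reflection $z\mapsto\overline z$, which preserves $\Omega_n$. One has $g_{\Omega_n}(\overline z,\zeta)=g_{\Omega_n}(z,\overline\zeta)$. One also has $g_{\Omega_n}(z,\zeta)-g_{\Omega_n}(z,\overline\zeta)=\log|(z-\overline\zeta)/(z-\zeta)|$, because both sides have the same singularities and vanish on $\partial\Omega_n\subset\R\cup\{\infty\}$. Hence $B_n^{\Omega_n}(z)G(z)$ and $\overline{B_n^{\Omega_n}(\overline z)}$ are single-valued and analytic in $\Omega_n$ with equal moduli. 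Therefore $B_n^{\Omega_n}(z)G(z)=c\,\overline{B_n^{\Omega_n}(\overline z)}$ for \emph{one} unimodular constant $c$. This is \eqref{eq:conjugating_B}, where the paper simply takes $c=1$. Letting $z\to x\pm i0$ gives the relation on $\sfA_0$ and on $I_n$ at once, so there is no separate phase per cut to worry about.

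You hope that \eqref{eq:sum_harmonic_measures} makes $c$ trivial, but it does not, so the step cannot be closed as you plan (nor exactly as the paper writes it). Take $k=0$, so that $z\mapsto-\overline z$ preserves $\Omega_n$ and fixes $z_0$ and $z_\infty$.

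1. The symmetry gives $B_n^{\Omega_n}(z)=\overline{B_n^{\Omega_n}(-\overline z)}$. So $B_n^{\Omega_n}$ is real on $i\R\cap\Omega_n$, positive on $i(0,\infty)$, and tends to $1$ as $z\to+i\infty$.
2. By the same symmetry, along a path from $+i0$ to $-i0$ around the tip $x_n$, the argument changes by half the total winding $\pm2\pi$ around $I_n$. Hence $B_n^{\Omega_n}<0$ just below $0$.
3. Past the zero of order $n+1$ at $z_\infty$, $B_n^{\Omega_n}$ tends to $(-1)^n$ as $z\to-i\infty$.

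Since $G(\infty)=1$, this gives $c=(-1)^n$. For odd $n$, the right-hand side of \eqref{eq:extremal_solution} then tends to $1$ at $+i\infty$ and to $-1$ at $-i\infty$, so it does not extend continuously. As a direct check, take $n=1$ and $\alpha<\pi/2$: the true extremal $(1-u\cos\alpha)/\sin\alpha$ has modulus $\tan(\alpha/2)<1$ at $u=1$, i.e.\ at $z=\infty$.

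What does work is to write the second term as $\frac{1-s_n}{2}\overline{B_n^{\Omega_n}(\overline z)}$. Then $R(x+i0)=R(x-i0)$ follows at once from $s_+=-s_-$ and $1/B_\pm=\overline{B_\pm}$. The later asymptotics are unaffected, since only the modulus of this term is used there.

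One smaller point: in (c), $F$ is not obviously bounded at $\pm x_n$, where $s_n$ blows up. Use $F=O(|z\mp x_n|^{-1/2})$ and the removable singularity theorem instead.
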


\begin{proof}
We first show that \(B_n^{\Omega_n}\) is single-valued. This follows directly from \eqref{eq:sum_harmonic_measures}. Since the modulus $|b_{\Omega_n}(\cdot,\zeta)|$ is single-valued, it suffices to show that the change in argument of $B_n^{\Omega_n}$ along any Jordan curve $\gamma \subset \Omega_n$ enclosing $I_n$, denoted $\Delta_{\gamma} \arg B_n^{\Omega_n}$, is an integer multiple of $2\pi$. For any such positively oriented curve $\gamma$, it is known (cf. \cite[\S 5.2, Ch.\;6]{ahlfors78} or \cite[Ch.\;4]{widom69}) that $\Delta_{\gamma} \arg b_{\Omega_n}(\cdot,\zeta)=2\pi\omega(\zeta, I_n;\Omega_n)$. Hence, using \eqref{eq:sum_harmonic_measures}, we find
\begin{align*}
\Delta_{\gamma} \arg B_n^{\Omega_n} &=
\Delta_{\gamma} \arg \biggl(b_{\Omega_n}(\cdot,z_\infty)^{n-k+1}\prod_{j=1}^{k}b_{\Omega_n}(\cdot,\overline{z}_j)\biggr) \\
&= 2\pi\biggl((n-k+1)\omega(z_\infty,I_n;\Omega_n)+\sum_{j=1}^{k}\omega(\overline{z}_j,I_n;\Omega_n)\biggr) = 2\pi.
\end{align*}
Thus, \(B_n^{\Omega_n}\) is single-valued in $\Omega_n$. Consequently, the right-hand side of \eqref{eq:extremal_solution}, which we denote by $R(z)$, is also single-valued in $\Omega_n$.

To prove the second claim, we show that $R$ is a rational function. Since $R/W_n$ has no poles in $\Omega_n$, this establishes the claim. The function $R$ is analytic in $\Omega_n$ except possibly at the zeros of $B_n^{\Omega_n}$, which are poles for $R$. Furthermore, since \(|B_n^{\Omega_n}| = 1\) on \(\partial \Omega_n\), $R$ is bounded near infinity within $\Omega_n$. To prove that $R$ is rational, it remains only to show that $R$ extends continuously across $\partial \Omega_n$. Since
\[ b_{\Omega_n}(z,\zeta)\biggl(\frac{z-\overline{\zeta}}{z-\zeta}\biggr) = b_{\Omega_n}(z,\overline{\zeta}) = \overline{b_{\Omega_n}(\overline{z},\zeta)}, \]
it follows that
\begin{equation}
	B_n^{\Omega_n}(z)\left(\frac{z-z_0}{z-z_\infty}\right)^{n-k+1}
	\prod_{j=1}^{k}\frac{z-z_j}{z-\overline{z}_j} = \overline{B_n^{\Omega_n}(\overline{z})}.
	\label{eq:conjugating_B}
\end{equation}
Since $s_n^2(x) > 0$ for $x \in \partial \Omega_n$, the boundary values $s_n(x\pm i0)$ are real and non-zero. Recalling the symmetry $s_n(z) = -\overline{s_n(\overline{z})}$, we get $s_n(x+i0) = -s_n(x-i0)$ for $x \in \partial \Omega_n$. Using this relation, \eqref{eq:conjugating_B}, and the fact that $|B_n^{\Omega_n}|=1$ on $\partial \Omega_n$, we find for $x \in \partial \Omega_n$:
\begin{align*}
	R(x+i0) &= \frac{1+s_n(x+i0)}{2B_n^{\Omega_n}(x+i0)}+\frac{1-s_n(x+i0)}{2}\overline{B_n^{\Omega_n}(x-i0)} \\
	& = \frac{1-s_n(x-i0)}{2}\overline{B_n^{\Omega_n}(\overline{x-i0})}+\frac{1+s_n(x-i0)}{2B_n^{\Omega_n}(x-i0)}
	= R(x-i0).
\end{align*}
Therefore, $R$ extends continuously across $\partial \Omega_n$, establishing that $R$ is a rational function.
\end{proof}

The product $R_n^{\sfA_0,W_n}(\cdot ,z_0)W_n(\cdot)$ has poles at $z_\infty$ and $\overline{z}_1,\dotsc,\overline{z}_k$, but these are naturally cancelled by multiplication with the factor
\begin{equation}
	B_n^{\Omega_0}(z):=b_{\Omega_0}(z,z_\infty)^{n-k}\prod_{j=1}^{k}b_{\Omega_0}(z,\overline{z}_j),
	\label{eq:B_0}
\end{equation}
where we again choose the normalization so that $B_n^{\Omega_0}(z_0)>0$. Defining the functions $f_n:\Omega_0\rightarrow\C$ by
\begin{equation}
	f_n(z) := B_{n}^{\Omega_0}(z)R_n^{\sfA_0,W_n}(z,z_0)W_n(z),
	\label{eq:f_n_sequence}
\end{equation}
we see that each $f_n$ is analytic throughout $\Omega_0$ and satisfies $f_n(z_0)>0$. Moreover, since $\vert f_n \vert \leq 1$ on $\partial\Omega_0 = \sfA_0$, the maximum principle implies that $\|f_n\|_{\Omega_0}\leq 1$ for all $n$. By Montel's theorem \cite[Theorem VII.2.9]{conway78}, any such sequence has a uniformly convergent subsequence on compact subsets of $\Omega_0$, converging to an analytic function. We will proceed by showing that $\{f_n\}$ has a unique limit point, implying that the full sequence converges locally uniformly.

\begin{proposition}
		Let $\{f_n\}$ be defined by \eqref{eq:f_n_sequence}. Then
	\begin{equation} \label{limit f_n}
	   \lim_{n\rightarrow \infty}f_n(z)= \frac{1+s(z)}{2}\frac{b_{\Omega_0}(z,0)}{b_{\Omega_0}(z,z_\infty)}
	\end{equation}
	uniformly on compact subsets of $\Omega_0$, where
	\begin{equation}
		s(z) := \frac{z_0 \sqrt{z^2-1}}{z \sqrt{z_0^2-1}}.
		\label{eq:s_definition}
	\end{equation}
	We adopt the normalization $s(z_0)=1$ and require that $b_{\Omega_0}(z_0, 0)$ and $b_{\Omega_0}(z_0, z_\infty)$ are positive.
	\label{prop:asymptotics_of_real_problem}	
\end{proposition}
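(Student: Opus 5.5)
The plan is to pass to the limit directly in the explicit representation \eqref{eq:extremal_solution}. Multiplying it by $B_n^{\Omega_0}$ gives
\[
f_n(z)=\frac{1+s_n(z)}{2}\,\frac{B_n^{\Omega_0}(z)}{B_n^{\Omega_n}(z)}+\frac{1-s_n(z)}{2}\,B_n^{\Omega_0}(z)B_n^{\Omega_n}(z)\Bigl(\frac{z-z_0}{z-z_\infty}\Bigr)^{n-k+1}\prod_{j=1}^k\frac{z-z_j}{z-\overline z_j}.
\]
It then suffices to identify the limits of $x_n$, of $s_n$, and of the ratio $B_n^{\Omega_0}/B_n^{\Omega_n}$, and to show that the second term tends to zero.

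\textbf{Step 1: $x_n\to 0$.} By \eqref{eq:sum_harmonic_measures}, $\omega(z_\infty,I_n;\Omega_n)\le 1/(n-k+1)\to0$. Harmonic measure of $[-x,x]$ seen from the fixed point $z_\infty$ (non-real) is bounded below by a positive function of $x$. This follows by monotonicity in $x$ and an explicit comparison, for instance mapping $\Omega_0$ to a disc. Hence $x_n\to0$, and quantitatively $\omega(z_\infty,I_n;\Omega_n)\asymp 1/\log(1/x_n)$. It follows that $s_n\to s$ locally uniformly on $\Omega_0\setminus\{0\}$, since $(z_0^2-x_n^2)/(z^2-x_n^2)\to z_0^2/z^2$ with the branches matched by $s_n(z_0)=1=s(z_0)$.

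\textbf{Step 2: the ratio.} Consider the Green functions of $\Omega_n$ and $\Omega_0$ with the same pole $\zeta$. Their difference is harmonic in $\Omega_n$, equals $0$ on $\sfA_0$, and equals $g_{\Omega_0}(x,\zeta)$ on $I_n$. Therefore
\[
g_{\Omega_0}(z,\zeta)-g_{\Omega_n}(z,\zeta)=\int_{I_n}g_{\Omega_0}(x,\zeta)\,\omega(z,dx;\Omega_n)=\bigl(g_{\Omega_0}(0,\zeta)+o(1)\bigr)\,\omega(z,I_n;\Omega_n).
\]
Hence
\[
\log\Bigl|\frac{B_n^{\Omega_0}}{B_n^{\Omega_n}}\Bigr|=-g_{\Omega_0}(z,z_\infty)+\sum_\zeta\text{(weights)}\cdot\bigl(g_{\Omega_0}(0,\zeta)+o(1)\bigr)\,\omega(z,I_n;\Omega_n).
\]
The key observation is that $\omega(z,I_n;\Omega_n)/\omega(\zeta,I_n;\Omega_n)\to g_{\Omega_0}(z,0)/g_{\Omega_0}(\zeta,0)$ locally uniformly as the interval shrinks to the point $0$. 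This is the standard behaviour of harmonic measure of a shrinking set, and can be proved by comparing both with $g_{\Omega_0}(\cdot,0)/\log(1/x_n)$. Combining this with \eqref{eq:sum_harmonic_measures}, the weighted sum $(n-k+1)\omega(z_\infty,\cdot)+\sum_j\omega(\overline z_j,\cdot)$ normalizes exactly. The total contribution is then
\[
g_{\Omega_0}(z,0)\cdot\frac{(n-k+1)\omega(z_\infty,I_n)\,g_{\Omega_0}(0,z_\infty)/g_{\Omega_0}(z_\infty,0)+\dots}{1}\to g_{\Omega_0}(z,0),
\]
using the symmetry of the Green function. Thus
\[
|B_n^{\Omega_0}/B_n^{\Omega_n}|\to e^{g_{\Omega_0}(z,z_\infty)-g_{\Omega_0}(z,0)}=|b_{\Omega_0}(z,0)/b_{\Omega_0}(z,z_\infty)|.
\]
Both sides are analytic, nonvanishing and positive at $z_0$, so the moduli determine the functions and the convergence holds locally uniformly. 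The delicate part is the uniform control of $n\,\omega(\cdot,I_n;\Omega_n)$ times $g_{\Omega_0}(0,\zeta)$ errors, since $n\,o(1)\,\omega$ must vanish. Here I would use that the $o(1)$ is $O(x_n)$ while $n\,\omega=O(1)$.

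\textbf{Step 3: the second term vanishes, and a normal-families finish.} By \eqref{eq:conjugating_B} the second term equals $\tfrac{1-s_n}{2}\,B_n^{\Omega_0}(z)\,\overline{B_n^{\Omega_n}(\overline z)}$. The factor $B_n^{\Omega_0}$ contains $b_{\Omega_0}(z,z_\infty)^{n-k}$. On compacts of the upper half-plane, $|\overline{B_n^{\Omega_n}(\overline z)}|\le1$ while $|b_{\Omega_0}|^{n-k}\to0$ geometrically, so the term vanishes there. In the lower half-plane one may write the same term as $\tfrac{1-s_n}{2}(B_n^{\Omega_0}/B_n^{\Omega_n})\cdot(\text{Blaschke-type factor})^{n-k+1}$, where the factor $\bigl((z-z_0)/(z-z_\infty)\bigr)b_{\Omega_n}b_{\Omega_0}$ is bounded by $1-\delta$ locally. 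Alternatively, since $\{f_n\}$ is a normal family and the limit is already identified on an open set (the upper half-plane), the identity theorem gives the limit everywhere. This yields \eqref{limit f_n}. The main obstacle I expect is Step 2, namely the asymptotics of harmonic measure of the shrinking interval uniformly in $z$, together with the $n$-fold amplification of the errors.
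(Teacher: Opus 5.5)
Your argument is correct, and its skeleton is the paper's: multiply \eqref{eq:extremal_solution} by $B_n^{\Omega_0}$, show $x_n\to0$ and $s_n\to s$, obtain the modulus of the Blaschke ratio from Green-function differences, kill the second term using $|B_n^{\Omega_0}|\to0$, and recover the argument by normal families and positivity at $z_0$. The real difference is in Step~2.

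\textbf{Your Step 2.} You write $g_{\Omega_0}(z,\zeta)-g_{\Omega_n}(z,\zeta)=\int_{I_n}g_{\Omega_0}(x,\zeta)\,\omega(z,dx;\Omega_n)$, with harmonic measure seen from the variable point $z$. You then have to carry information from the poles to $z$ through the ratio asymptotics $\omega(z,I_n;\Omega_n)/\omega(\zeta,I_n;\Omega_n)\to g_{\Omega_0}(z,0)/g_{\Omega_0}(\zeta,0)$. Granting that, \eqref{eq:sum_harmonic_measures} normalizes the weighted sum exactly. No separate Harnack estimate is needed, and the $n$-fold amplification is harmless because the weighted harmonic measures sum to $1$.

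\textbf{The paper's Step 2.} It uses the symmetric representation $g_{\Omega_0}(z,\zeta)-g_{\Omega_n}(z,\zeta)=\int_{I_n}g_{\Omega_0}(z,x)\,\omega(\zeta,dx;\Omega_n)$, with harmonic measure seen from the pole. The amplified term is then directly $\int g_{\Omega_0}(z,x)\,(n-k)\,\omega(z_\infty,dx;\Omega_n)$. It tends to $g_{\Omega_0}(z,0)$ by the weak-star limit \eqref{d0} of Lemma~\ref{lem:intervals_shrinking}, which comes from Harnack plus \eqref{eq:sum_harmonic_measures}. The poles $\overline z_j$ only need $\omega(\overline z_j,I_n;\Omega_n)\to0$.

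\textbf{Comparison.} Your ``key observation'' needs no equilibrium-measure comparison. Both representations compute the same symmetric function $g_{\Omega_0}-g_{\Omega_n}$, and equating them gives
$\bigl(g_{\Omega_0}(0,\zeta)+o(1)\bigr)\,\omega(z,I_n;\Omega_n)=\bigl(g_{\Omega_0}(z,0)+o(1)\bigr)\,\omega(\zeta,I_n;\Omega_n)$
at once, uniformly on compacts of $\Omega_0\setminus\{0\}$. Your route buys an exact use of the normalization identity. The paper's route is more economical, and the ``delicate part'' you anticipate never arises there.

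\textbf{Small slips.}
\begin{itemize}
\item Your display for $\log|B_n^{\Omega_0}/B_n^{\Omega_n}|$ has both signs reversed. With $D:=g_{\Omega_0}-g_{\Omega_n}\ge0$, it equals $g_{\Omega_0}(z,z_\infty)-(n-k+1)D(z,z_\infty)-\sum_jD(z,\overline z_j)$. Your stated limit $g_{\Omega_0}(z,z_\infty)-g_{\Omega_0}(z,0)$ is nonetheless correct.
\item $B_n^{\Omega_0}/B_n^{\Omega_n}$ and $b_{\Omega_0}(z,0)/b_{\Omega_0}(z,z_\infty)$ are not nonvanishing analytic functions everywhere; both have a simple pole at $z_\infty$. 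The argument recovery from moduli should therefore be done on the upper half-plane, which is in effect what you do.
\item In Step~3 the restriction to the upper half-plane, and the vague $1-\delta$ bound in the lower one, are unnecessary. Since $|B_n^{\Omega_n}|\le1$ on all of $\Omega_n$, \eqref{eq:conjugating_B} bounds the second term by $\tfrac12|1-s_n|\,|B_n^{\Omega_0}|$ on every compact subset of $\Omega_0\setminus\{0\}$. This is exactly Lemma~\ref{lem:limiting_behaviours}.
\end{itemize}
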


We will establish this convergence in several steps, closely following the approach in \cite{eichinger17} while introducing the necessary modifications to account for the presence of a weight function. 

\begin{lemma} \label{lem:intervals_shrinking}
	Let $I_n = [-x_n,x_n]$ be defined as in \eqref{I_n}. Then $x_n\rightarrow 0$ as $n\rightarrow \infty$. Moreover, the restriction of $n \omega(z_\infty,dx;\Omega_n)$ to $I_n$ converges in the weak-star sense to the Dirac measure at the origin:
	\begin{equation} \label{d0}
	   n\omega(z_\infty,dx; \Omega_n)\big\vert_{I_n}\xrightarrow{\enskip \ast\enskip}\delta_0(dx)
	\; \text{ as } n\rightarrow \infty.
	\end{equation}
\end{lemma}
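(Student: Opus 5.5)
The plan is to first show $x_n\to0$ from the normalization \eqref{eq:sum_harmonic_measures}, and then identify the weak-star limit by using that total masses converge to $1$ while supports shrink to the origin.

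\emph{Step 1: $x_n\to 0$.} All terms in \eqref{eq:sum_harmonic_measures} are nonnegative, so
\[
(n-k+1)\,\omega(z_\infty,I_n;\Omega_n)\le 1 .
\]
Suppose along a subsequence $x_n\ge \varepsilon>0$. Monotonicity of harmonic measure in the set (enlarging the compact piece $I_n\subset(-1,1)$ while keeping $\sfA_0$ fixed) gives
\[
\omega(z_\infty,I_n;\Omega_n)\ge \omega\bigl(z_\infty,[-\varepsilon,\varepsilon];\Omega_0\setminus[-\varepsilon,\varepsilon]\bigr)=:c_\varepsilon>0 .
\]
Here $c_\varepsilon>0$ because $[-\varepsilon,\varepsilon]$ is non-polar. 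Then $(n-k+1)c_\varepsilon\le 1$, which fails for large $n$. Hence $x_n\to0$.

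\emph{Step 2: total mass.} The restricted measures $\nu_n:=n\,\omega(z_\infty,dx;\Omega_n)|_{I_n}$ are supported in $I_n$, which shrinks to $\{0\}$. So it suffices to show $\nu_n(I_n)=n\,\omega(z_\infty,I_n;\Omega_n)\to 1$; then for any continuous $\varphi$,
\[
\Bigl|\int\varphi\,d\nu_n-\varphi(0)\Bigr|\le \nu_n(I_n)\sup_{I_n}|\varphi-\varphi(0)|+|\varphi(0)|\,|\nu_n(I_n)-1|\to0 .
\]
By \eqref{eq:sum_harmonic_measures},
\[
n\,\omega(z_\infty,I_n;\Omega_n)=\frac{n}{n-k+1}\Bigl(1-\sum_{j=1}^k\omega(\overline z_j,I_n;\Omega_n)\Bigr).
\]
The prefactor tends to $1$, so it remains to show $\omega(\overline z_j,I_n;\Omega_n)\to0$ for each fixed $j$.

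\emph{Step 3 (the main point, though elementary).} Harmonic measure of shrinking sets tends to zero at points at positive distance. The sets $I_n$ shrink to a point, and $\Cap(I_n)=x_n/2\to0$. Use monotonicity again, this time removing the outer boundary: harmonic measure of $I_n$ in $\Omega_n$ is dominated by harmonic measure of $I_n$ in $\RS\setminus(I_n\cup\sfA_0')$ for any subset $\sfA_0'\subset\sfA_0$ (fewer competing boundary means more mass on $I_n$). Take $\sfA_0'=[1,2]$, or simply compare with $\omega(\zeta,I_n;D\setminus I_n)$ for the disk $D=\{|z|<1\}$, since $D\setminus I_n\subset \Omega_n$ and $\partial D$ is separated from the relevant region. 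For a point $\zeta$ in the complement of $I_n$, the function $h_n(z)=\omega(z,I_n;\Omega_n)$ is harmonic and bounded by $1$. A Harnack/comparison argument with the explicit function
\[
\frac{\log(2/|z|)}{\log(2/\,\Cap\text{-scale of }I_n)},
\]
or equivalently the standard estimate $\omega(\zeta,I_n;\cdot)\lesssim 1/\log(1/\Cap(I_n))$, gives $h_n(\zeta)\to0$ for every fixed $\zeta\in\Omega_0\setminus\{0\}$, and in particular at $\zeta=\overline z_j$, which lie in the lower half-plane. Concretely, I would compare with the Green potential $g_{\Omega_0}(z,0)$-type barrier: $h_n(z)\le \log(R/|z|)/\log(R/x_n)$ on $\{x_n<|z|<R\}$ with $R$ small enough that the disk lies in $\Omega_0\cup(-1,1)$, and use the maximum principle on $\Omega_n$ outside this disk. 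Since $\log(1/x_n)\to\infty$, this yields the claim and completes the proof.
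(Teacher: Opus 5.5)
Your Steps 1 and 2 are sound. Step 1 is the paper's monotonicity argument, and you get $(n-k+1)\,\omega(z_\infty,I_n;\Omega_n)\le 1$ more cheaply than the paper, from nonnegativity instead of Harnack. Step 2 correctly reduces the weak-star claim to showing $\omega(\overline z_j,I_n;\Omega_n)\to 0$.

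The gap is in Step 3, which does not prove this as written. Your concrete barrier inequality
\[
h_n(z)\le \log(R/|z|)/\log(R/x_n) \quad\text{on } \{x_n<|z|<R\}
\]
is false. Its right-hand side vanishes on $|z|=R$, whereas $h_n>0$ everywhere in $\Omega_n$, and for large $n$ the whole circle $|z|=R$ lies in $\Omega_n$. So the maximum principle cannot produce this inequality. Applying the maximum principle ``outside this disk'' would also require exactly the smallness of $h_n$ on $|z|=R$ that you are trying to establish. The same objection applies to the $\log(2/|z|)$-type barrier, which is moreover negative on $\sfA_0\cap\{|x|>2\}$, where $h_n=0$. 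Your alternative comparison with $D\setminus I_n$ runs the wrong way: $D\setminus I_n\subset\Omega_n$ gives $\omega(\zeta,I_n;D\setminus I_n)\le\omega(\zeta,I_n;\Omega_n)$, which is a lower bound. In addition, the points $\overline z_j$ need not lie in $D$; for instance $z_\infty=-i\tan(\alpha/2)$ lies outside $D$ when $\alpha>\pi/2$.

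The missing idea, which is what the paper uses, is Harnack's inequality on the fixed lower half-plane. For every $n$, $z\mapsto\omega(z,I_n;\Omega_n)$ is a positive harmonic function there. Hence there is $M\ge 1$, independent of $n$, with $\omega(\overline z_j,I_n;\Omega_n)\le M\,\omega(z_\infty,I_n;\Omega_n)\le M/(n-k+1)$, using your own Step 1 bound. This closes Step 3 in one line. The paper uses the two-sided version to get $1/(n-k+1+kM)\le\omega(z_\infty,I_n;\Omega_n)\le 1/(n-k+1+kM^{-1})$ directly.

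If you prefer your potential-theoretic route, the barrier must be global rather than local. Take $G:=g_{\Omega_0}(\cdot,0)$ and $m_n:=\inf_{I_n\setminus\{0\}}G\ge\log(1/x_n)-C\to\infty$. Then $h_n-G/m_n$ is harmonic on $\Omega_n$ and bounded above by $1$. It tends to at most $0$ on $\sfA_0$, where $h_n\to 0$ and $G\ge 0$, and on $I_n$, where $h_n\to 1\le G/m_n$. So $h_n\le G/m_n$ on $\Omega_n$, and therefore $h_n(\overline z_j)\to 0$. Either repair makes your proof complete.
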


\begin{proof}
Let $\omega_n(z, I) := \omega(z, I; \Omega_n)$. The functions $z \mapsto \omega_n(z, I_n)$ are positive harmonic functions in the lower half-plane. By Harnack's inequality \cite[Corollary 1.3.3]{ransford95}, there exists a constant $M \ge 1$, independent of $n$, such that
\begin{equation}
	M^{-1}\omega_n(z_\infty, I_n) \le \omega_n(\overline{z}_j, I_n) \le M\omega_n(z_\infty, I_n), \quad j=1,\dotsc,k.
	\label{eq:harnack}
\end{equation}
Combining \eqref{eq:harnack} with the identity \eqref{eq:sum_harmonic_measures} yields
\begin{align*}
	1 = (n-k+1)\omega_n(z_\infty, I_n) + \sum_{j=1}^{k}\omega_n(\overline{z}_j, I_n) 
	\ge \left(n-k+1 + k M^{-1}\right)\omega_n(z_\infty, I_n).
\end{align*}
This implies $\omega_n(z_\infty, I_n) \le (n-k+1 + kM^{-1})^{-1}$, and thus
\begin{equation} \label{eq:limit_harmonic_measure}
 \lim_{n\to\infty} \omega_n(z_\infty, I_n) = 0.
\end{equation}

To show that $x_n \to 0$, we define the function $f(x) := \omega(z_\infty, [-x, x]; \Omega_x)$ for $0<x<1$, where $\Omega_x := \Omega_0 \setminus [-x, x]$. When $a \leq b$, we have $\Omega_b \subset \Omega_a$, and by the domain monotonicity principle for harmonic measure \cite[Corollary 4.3.9]{ransford95}, it follows that $\omega(z_\infty, \sfA_0; \Omega_b) \le \omega(z_\infty, \sfA_0; \Omega_a)$. Since $\omega(z_\infty, [-x, x]; \Omega_x) = 1 - \omega(z_\infty, \sfA_0; \Omega_x)$, this yields $f(a) = \omega(z_\infty, [-a, a]; \Omega_a) \le \omega(z_\infty, [-b, b]; \Omega_b) = f(b)$. Thus, $f(x)$ is a non-decreasing function of $x$. Since $f$ is positive and non-decreasing, the limit $\lim_{n \to \infty} f(x_n) = 0$, shown in \eqref{eq:limit_harmonic_measure}, implies that $x_n \to 0$.

Finally, combining the upper bound for $\omega_n(z_\infty, I_n)$ derived from \eqref{eq:harnack} with the corresponding lower bound, we find
\[
\frac{1}{n-k+1 + kM} \le \omega_n(z_\infty, I_n) \le \frac{1}{n-k+1 + kM^{-1}}.
\]
This implies $\lim_{n\to\infty} n \omega_n(z_\infty, I_n) = 1$. Let $\nu_n := n \omega_n(z_\infty, dx)|_{I_n}$. Then $\{\nu_n\}$ is a sequence of probability measures with $\mathrm{supp}(\nu_n) = I_n = [-x_n, x_n]$. Since $x_n \to 0$, the supports shrink to the origin. This, combined with the total mass converging to 1, implies the weak-star convergence $\nu_n \xrightarrow{*} \delta_0$ as $n \to \infty$. 
\end{proof}

The limiting behavior of the functions $s_n$ defined in \eqref{eq:s_n_def} can now be established. 
\begin{lemma}
	\label{lem:limiting_behaviours}
	Let $B_n^{\Omega_n}$, $s_n$, and $B_n^{\Omega_0}$ be defined by \eqref{eq:blaschke_factors}, \eqref{eq:s_n_def}, and \eqref{eq:B_0}, respectively, and let $s$ be defined by \eqref{eq:s_definition}. Then
	\begin{equation}
		\lim_{n\rightarrow\infty}s_n(z)^2 = s(z)^2=\frac{z_0^2}{z_0^2-1}\frac{z^2-1}{z^2}
		\label{eq:s_n_limit}
	\end{equation}
	and
	\begin{equation}
		\lim_{n\rightarrow \infty}B_n^{\Omega_0}(z)\frac{1-s_n(z)}{2}B_n^{\Omega_n}(z)\left(\frac{z-z_0}{z-z_\infty}\right)^{n-k+1}\prod_{j=1}^{k}\frac{z-z_j}{z-\overline{z}_j}=0,\label{eq:vanishing_factor_limit}
	\end{equation}
	uniformly on compact subsets of $\Omega_0\setminus\{0\}$.
\end{lemma}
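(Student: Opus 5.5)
The first limit is immediate from Lemma \ref{lem:intervals_shrinking}. Since $x_n\to 0$, the rational functions
\[
s_n(z)^2=\frac{z_0^2-x_n^2}{z_0^2-1}\,\frac{z^2-1}{z^2-x_n^2}
\]
converge to $\frac{z_0^2}{z_0^2-1}\frac{z^2-1}{z^2}$. The convergence is uniform on any compact $K\subset\Omega_0\setminus\{0\}$, because $|z^2-x_n^2|$ is bounded below on $K$ once $x_n$ is small. As a consequence, $|1-s_n|$ is uniformly bounded on $K$ for large $n$.

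For the second limit, I would use \eqref{eq:conjugating_B} to rewrite the product as
\[
B_n^{\Omega_0}(z)\,\overline{B_n^{\Omega_n}(\overline z)}\,\frac{1-s_n(z)}{2}.
\]
By the remark above it then suffices to show that
\[
B_n^{\Omega_0}(z)\,\overline{B_n^{\Omega_n}(\overline z)}\to 0
\]
locally uniformly on $\Omega_0\setminus\{0\}$. Taking moduli, we need $\log|B_n^{\Omega_0}(z)|+\log|B_n^{\Omega_n}(\overline z)|\to-\infty$. Both terms are sums of Green's functions with a minus sign:
\[
\log|B_n^{\Omega_0}(z)|=-(n-k)g_{\Omega_0}(z,z_\infty)-\sum_j g_{\Omega_0}(z,\overline z_j),
\]
and similarly for $\Omega_n$ with exponent $n-k+1$ and evaluated at $\overline z$. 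For $z$ in the upper half-plane, $\overline z$ lies in the lower half-plane, so it sits in the same component as the poles. I would bound the $\Omega_n$ term using $g_{\Omega_n}(\cdot,\zeta)\ge 0$. The $\Omega_0$ term gives $-(n-k)g_{\Omega_0}(z,z_\infty)$, and this tends to $-\infty$ uniformly on compacts, since $g_{\Omega_0}(\cdot,z_\infty)>0$ is continuous on $\Omega_0$ and hence bounded below on $K$. Here $\Omega_0$ is connected, because $(-1,1)$ joins the half-planes. The same argument works in the lower half-plane, and points of $(-1,1)\setminus\{0\}$ are also covered, since the bound only uses positivity of the Green's functions on $\Omega_n$ (where defined) and on $\Omega_0$.

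The main obstacle is the case where $\overline z\in I_n$ or close to it, where $B_n^{\Omega_n}(\overline z)$ is not defined as a point of $\Omega_n$. This is handled by the fact that $x_n\to 0$, so $K\cap I_n=\emptyset$ for large $n$. On $\Omega_n$ the modulus $|B_n^{\Omega_n}|\le 1$ holds by the maximum principle, giving
\[
|B_n^{\Omega_0}\,\overline{B_n^{\Omega_n}(\overline\cdot)}|\le |B_n^{\Omega_0}|\le e^{-(n-k)\min_K g_{\Omega_0}(\cdot,z_\infty)}\to0.
\]
One also has to check that $(z-z_0)/(z-z_\infty)$ causes no trouble. It doesn't, since it has been absorbed into the identity \eqref{eq:conjugating_B}, and that identity is valid on $\Omega_n$ as a single-valued relation by Lemma \ref{Bn}.
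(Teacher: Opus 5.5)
Your proof is correct and takes essentially the paper's route. The paper bounds each factor $\bigl|b_{\Omega_n}(z,\overline{z}_j)\frac{z-z_j}{z-\overline{z}_j}\bigr|$ by $1$ via the maximum modulus principle, which is the same bound $|B_n^{\Omega_n}(\overline z)|\le 1$ you read off from \eqref{eq:conjugating_B}, and both arguments then conclude from $B_n^{\Omega_0}\to 0$ locally uniformly, with your checks that $|1-s_n|$ stays bounded on $K$ and that $K\cap I_n=\emptyset$ for large $n$ supplying details the paper leaves implicit.
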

\begin{proof}
	It is clear that \eqref{eq:s_n_limit} follows directly from Lemma \ref{lem:intervals_shrinking} and the definition of $s_n$.
	To prove \eqref{eq:vanishing_factor_limit}, note that for $j=0,1,\dotsc,k$, the function 
	$\vert b_{\Omega_n}(z,\overline{z}_j){(z-z_j)}/{(z-\overline{z}_j)}\vert$ has boundary values at most $1$ 
	on $\partial\Omega_n$. By the maximum modulus principle, this implies that
	\[\left|b_{\Omega_n}(z,\overline{z}_j)\frac{z-z_j}{z-\overline{z}_j}\right| < 1,
	   \quad z\in \Omega_n, \quad j=0,1,\dotsc, k.\] 
Since $B_n^{\Omega_0}(z)\to 0$ uniformly on every compact subset of $\Omega_0$ as $n\to\infty$, the result follows.
\end{proof}

We are now in position to analyze the limit points of the sequence $\{f_n\}$ defined in \eqref{eq:f_n_sequence} and show that the full sequence converges uniformly on compact subsets of the domain $\Omega_0$.

\begin{proof}[Proof of Proposition \ref{prop:asymptotics_of_real_problem}]
We begin by noting that the right-hand side of \eqref{limit f_n} is analytic in $\Omega_0$. This follows from the fact that the simple pole of $s(z)$ at $z=0$ is canceled by the zero of $b_{\Omega_0}(z,0)$, while the simple pole at $z=z_\infty$ also vanishes due to the identity $s(z_\infty)=-1$.

For $z,\zeta\in \C\setminus\R$, we have 
	\[\log\left|\frac{b_{\Omega_0}(z,\zeta)}{b_{\Omega_n}(z,\zeta)}\right| = g_{\Omega_n}(z,\zeta)-g_{\Omega_0}(z,\zeta) = -\int_{-x_n}^{x_n}g_{\Omega_0}(z,x)\,\omega(\zeta,dx;\Omega_n).\]
	As in the proof of Lemma \ref{lem:intervals_shrinking}, it follows that $\omega(\zeta,I_n;\Omega_n) \rightarrow 0$ as $n\rightarrow \infty$. Together with the boundedness of $x\mapsto g_{\Omega_0}(z,x)$ on $I_n$ for fixed $z$, this implies
	\[\log\left|\frac{b_{\Omega_0}(z,\zeta)}{b_{\Omega_n}(z,\zeta)}\right|\longrightarrow 0 \; \mbox{ as } n\rightarrow \infty.\]
	In particular, for $j=0, 1, \ldots, k$, we obtain 
	\[ |b_{\Omega_n}(z, \overline{z}_j)|\rightarrow |b_{\Omega_0}(z,\zeta)| \; \mbox{ as } n\rightarrow \infty. \]  
	Applying \eqref{d0}, we further deduce that
	\[\log\left|\frac{b_{\Omega_0}(z,z_\infty)}{b_{\Omega_n}(z,z_\infty)}\right|^{n-k} = -\int_{-x_n}^{x_n}g_{\Omega_0}(z,x)(n-k)\omega(z_\infty,dx;\Omega_n)\longrightarrow -g_{\Omega_0}(z,0) \; \text{ as } n\rightarrow \infty.\]	
	By combining this with \eqref{eq:extremal_solution} and Lemma \ref{lem:limiting_behaviours}, we conclude that 
	\[|f_n(z)| = \left|B_n^{\Omega_0}(z)R_n^{\sfA_0,W_n}(z,z_0)W_n(z)\right| \longrightarrow \left|\frac{1+s(z)}{2}\frac{b_{\Omega_0}(z,0)}{b_{\Omega_0}(z,z_\infty)}\right| \; \text{ as }n\rightarrow \infty.
	\]
The limit in \eqref{limit f_n} now follows from the normal family property of $\{ f_n \}$ or directly from \cite[Prop.~4.2]{christiansen-simon-zinchenko-II}, given that $f_n(z_0)>0$ for all $n$ and
\[
   \frac{1+s(z_0)}{2}\frac{b_{\Omega_0}(z_0,0)}{b_{\Omega_0}(z_0,z_\infty)}>0
\]
by our normalization.	
%
\end{proof}

Our focus is on weighted polynomials associated with circular arcs, which leads us to the task of transferring asymptotic limits back to $\Omega_\alpha$. 
Since $u$ is a conformal mapping, 
and given that both $b_{\Omega_0}(z_0, z_\infty)$ and $b_{\Omega_\alpha}(0, \infty)$ are positive, it follows that
\begin{equation}
   b_{\Omega_0}(z, \zeta) = b_{\Omega_\alpha}\bigl(u(z), u(\zeta)\bigr), \quad z, \zeta \in\Omega_0.
\end{equation}
Combining this with Lemma \ref{lem:identification}, we obtain
\begin{equation} \label{bRw}
   b_{\Omega_\alpha}\bigl(u(z),\infty\bigr)^n R_{n}^{\Gamma_\alpha,w}\bigl(u(z),0\bigr)
   w\bigl(u(z)\bigr)
   =b_{\Omega_0}(z,z_\infty)^nR_n^{\sfA_0,W_n}(z,z_0)W_n(z).
\end{equation}  
A direct computation using \eqref{eq:mobius_transformation} and \eqref{eq:rational_funct_transform} yields the expression
\begin{equation}
\label{s}
   s(z) = \frac{\sqrt{\bigl(u(z)-e^{i\alpha}\bigr)\bigl(u(z)-e^{-i\alpha}\bigr)}}{u(z)+1},
\end{equation}
where the branch of the square root is taken with a cut along $\Gamma_\alpha$ and is normalized to be $1$ at $u(z)=0$. 
Applying Proposition \ref{prop:asymptotics_of_real_problem} --- where the core asymptotic analysis is carried out --- and keeping \eqref{eq:B_0}--\eqref{eq:f_n_sequence} in mind, we conclude that as $n\to\infty$, the expression in \eqref{bRw} converges to
\begin{multline} 
    \qquad \frac{1+s(z)}{2} \frac{b_{\Omega_0}(z,0)}{b_{\Omega_0}(z,z_\infty)}
    \prod_{j=1}^{k}\frac{b_{\Omega_0}(z,z_\infty)}{b_{\Omega_0}(z,\overline{z}_j)}  \\
    =\frac{u(z)+1+\sqrt{\bigl(u(z)-e^{i\alpha}\bigr)\bigl(u(z)-e^{-i\alpha}\bigr)}}{2\bigl( u(z)+1 \bigr)}
    \frac{b_{\Omega_\alpha}\bigl(u(z), -1\bigr)}{b_{\Omega_\alpha}\bigl(u(z),\infty\bigr)}
    \prod_{j=1}^{k}\frac{b_{\Omega_\alpha}\bigl(u(z),\infty\bigr)}{b_{\Omega_\alpha}\bigl(u(z), u_j\bigr)}. 
    \qquad
\end{multline}
All that remains is to divide by the weight and simplify. 
The function 
\begin{equation}
   f(u):= \biggl( \prod_{j=1}^{k} \frac{b_{\Omega_\alpha}(u,\infty)}{b_{\Omega_\alpha}(u,u_j)} \biggr) 
   \Big/ w(u)
\end{equation} 
is analytic and non-vanishing in $\Omega_\alpha$, as poles and zeros cancel out. Consequently, $\log\vert f(u)\vert$ is harmonic throughout $\Omega_\alpha$. By \eqref{eq:harmonic_measure_property}, so is the function
\begin{equation}
\label{hw}
   h(u) := -\int_{\Gamma_\alpha}\log |w(x)|\,\omega(u,dx,\Omega_{\alpha}).
\end{equation}
Since both functions attain the same boundary values, $-\log\vert w(x)\vert$ on $\Gamma_\alpha$, they must be identical by the maximum principle. 
Let $\widetilde{h}$ denote the harmonic conjugate of $h$, normalized to vanish at $u_0$. Then the analytic function
\begin{equation}
\label{eq:outer_function}
   F_w(u, u_0) := \exp \Bigl[ h(u)+i \widetilde{h}(u) \Bigr]
\end{equation}
becomes positive at $u=u_0$. Since our normalization ensures that $f(0)>0$, it follows that 
\begin{equation}
   f(u)=F_w(u, 0), \quad u\in\Omega_\alpha.
\end{equation}    
Thus, assembling our results, we arrive at the final asymptotic formula
\begin{equation}
\label{eq:residual_at_zero_limit}
   b_{\Omega_\alpha}(u,\infty)^n R_{n}^{\Gamma_\alpha,w}(u,0) \longrightarrow  
   \frac{u+1+\sqrt{(u-e^{i\alpha})(u-e^{-i\alpha})}}{2(u+1)}
   \frac{b_{\Omega_\alpha}(u, -1)}{b_{\Omega_\alpha}(u,\infty)} F_w(u, 0),
\end{equation}
which holds uniformly on compact subsets of $\Omega_\alpha$ as $n\to\infty$. Notably, the singularity at $u=-1$ is exactly canceled by the zero of $b_{\Omega_\alpha}(u, -1)$.

In the next section, specifically in Theorem \ref{thm:weighted_residual_asymptotics}, we establish an asymptotic formula valid for general \( u_0 \). For now, we present a straightforward approach to obtaining the result for \( u_0 = \infty \).
The key idea is to consider the conjugate reciprocal polynomial, defined via the involution map \( ^\ast: \mathcal{P}_n \to \mathcal{P}_n \) given by  
\[
   P^\ast(u) := u^n \overline{P(1/\overline{u})}.
\]
This transformation interchanges the constant term and the leading coefficient of \( P \) and \( P^* \).  
For convenience, we introduce the notation \( u^\ast := 1/\overline{u} \) and observe that \( u^* = u \) for \( u \in \mathbb{T} \). Moreover, the mapping \( u \mapsto u^* \) defines a bijection of \( \Omega_\alpha \) onto itself. Applying this to the residual polynomials, we obtain  
\begin{equation}
\label{Rn star}
   R_{n}^{\Gamma_\alpha,w}(\cdot,\infty) = \bigl(R_{n}^{\Gamma_\alpha,w}(\cdot,0) \bigr)^\ast.
\end{equation}

The corresponding transformation of the Riemann maps follows from the fact that for each fixed \( v \in \Omega_\alpha \), there exists \( \gamma \in [0, 2\pi) \) such that  
\begin{equation}
\label{b*}
   \overline{b_{\Omega_\alpha}(u^*, v)} = e^{i\gamma} b_{\Omega_\alpha}(u, v^*).
\end{equation}
This relation arises because the left-hand side defines an analytic map from \( \Omega_\alpha \) onto \( \mathbb{D} \) that sends \( v^* \) to \( 0 \).  
In particular, choosing the normalization \( b_{\Omega_\alpha}(\infty, 0) > 0 \), we deduce that  
\begin{equation}
\label{b0}
   \overline{b_{\Omega_\alpha}(u^*, \infty)} = b_{\Omega_\alpha}(u, 0).
\end{equation}
Additionally, we note the identity  
   $b_{\Omega_\alpha}(u, 0) = u b_{\Omega_\alpha}(u, \infty)$,
which follows from the maximum principle by considering the ratio of these functions and noting that both are positive at \( u = \infty \). Furthermore, as explained just below \eqref{normal}, we have 
\begin{equation}
\label{ca}
   b_{\Omega_\alpha}(\infty, 0) = \lim_{u\to\infty} ub_{\Omega_\alpha}(u, \infty) =  
   \Cap(\Gamma_\alpha)=\sin(\alpha/2).
\end{equation}    
Returning to the asymptotic formula, we now obtain
\begin{multline}
	\qquad b_{\Omega_\alpha}(u,\infty)^nR_{n}^{\Gamma_\alpha,w}(u,\infty) 
	= b_{\Omega_\alpha}(u,\infty)^nu^n\overline{R_{n}^{\Gamma_\alpha,w}(u^\ast,0) }  \\
	= b_{\Omega_\alpha}(u,0)^n \overline{R_{n}^{\Gamma_\alpha,w}(u^\ast,0) } 
	= \overline{b_{\Omega_{\alpha}}(u^\ast,\infty)^nR_{n}^{\Gamma_\alpha,w}(u^\ast,0) }, \qquad
\end{multline}
which leads to the following result.
\begin{proposition}
Let $w$ be the weight function defined in \eqref{eq:weight_function_form}. Then \eqref{eq:residual_at_zero_limit} holds, and with the normalization $b_{\Omega_\alpha}(\infty,-1)>0$, we have
\begin{equation}
	b_{\Omega_{\alpha}}(u,\infty)^nR_{n}^{\Gamma_\alpha,w}(u,\infty)\longrightarrow 
	\frac{u+1-\sqrt{(u-e^{i\alpha})(u-e^{-i\alpha})}}{2(u+1)}
	\frac{b_{\Omega_\alpha}(u,-1)}{b_{\Omega_\alpha}(u,0)}F_w(u,\infty)
	\label{eq:residual_at_infinity_limit}
\end{equation}
uniformly on compact subsets of $\Omega_\alpha$ as $n\rightarrow \infty$.
\end{proposition}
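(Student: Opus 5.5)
The plan is to deduce \eqref{eq:residual_at_infinity_limit} from \eqref{eq:residual_at_zero_limit} using the identity just derived,
\[
b_{\Omega_\alpha}(u,\infty)^nR_{n}^{\Gamma_\alpha,w}(u,\infty)=\overline{b_{\Omega_{\alpha}}(u^\ast,\infty)^nR_{n}^{\Gamma_\alpha,w}(u^\ast,0)}.
\]
Two preliminary points need care. First, the involution must respect the weight: \eqref{Rn star} requires $|w(x)|=|w(x^\ast)|$, which holds trivially since $x^\ast=x$ on $\Gamma_\alpha\subset\T$. Second, \eqref{Rn star} itself needs the normalizations to match. The map $P\mapsto P^\ast$ preserves $\|Pw\|_{\Gamma_\alpha}$ because $|P^\ast(x)|=|P(x)|$ on $\T$, and it sends the constant term to the conjugated leading coefficient. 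Uniqueness of residual polynomials then gives \eqref{Rn star}, and positivity is preserved.

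Since $u\mapsto u^\ast$ is an antiholomorphic homeomorphism of $\Omega_\alpha$ onto itself, compact sets are mapped to compact sets. Hence the locally uniform convergence in \eqref{eq:residual_at_zero_limit}, evaluated at $u^\ast$ and conjugated, yields locally uniform convergence of the left-hand side of \eqref{eq:residual_at_infinity_limit}. The limit is $\overline{G(u^\ast)}$, where $G$ denotes the right-hand side of \eqref{eq:residual_at_zero_limit}. It remains to identify $\overline{G(u^\ast)}$ factor by factor.

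\textbf{Algebraic factor.} Write $\sqrt{(u-e^{i\alpha})(u-e^{-i\alpha})}=u\sqrt{(1-e^{i\alpha}/u)(1-e^{-i\alpha}/u)}$. Conjugating at $u^\ast=1/\overline u$ gives
\[
\overline{\sqrt{(u^\ast-e^{i\alpha})(u^\ast-e^{-i\alpha})}}=\pm\, u^{-1}\sqrt{(u-e^{i\alpha})(u-e^{-i\alpha})},
\]
and $\overline{u^\ast+1}=(1+u)/u$. Therefore
\[
\overline{\frac{u^\ast+1+\sqrt{\cdots}}{2(u^\ast+1)}}=\frac{u+1\pm\sqrt{(u-e^{i\alpha})(u-e^{-i\alpha})}}{2(u+1)}.
\]
The sign is fixed by the branch. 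In \eqref{eq:residual_at_zero_limit} the root equals $1$ at $u=0$; at $u^\ast\to0$, i.e.\ $u\to\infty$, this translates to the root behaving like $-u$ in the convention where it is $u+O(1)$ at infinity. This produces the minus sign appearing in \eqref{eq:residual_at_infinity_limit}. A consistency check: the value at $u=\infty$ is then $\tfrac12(1-(-1))=1$, which gives a positive limit there.

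\textbf{Riemann maps.} By \eqref{b0},
\[
\overline{b_{\Omega_\alpha}(u^\ast,\infty)}=b_{\Omega_\alpha}(u,0),
\qquad
\overline{b_{\Omega_\alpha}(u^\ast,-1)}=e^{i\gamma}b_{\Omega_\alpha}(u,-1),
\]
the second by \eqref{b*} since $(-1)^\ast=-1$. This gives the ratio $b_{\Omega_\alpha}(u,-1)/b_{\Omega_\alpha}(u,0)$ up to a unimodular constant.

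\textbf{Outer function.} The map $\overline{F_w(u^\ast,0)}$ is analytic with modulus $\exp h(u^\ast)$. Because harmonic measure is invariant under the reflection, $h(u^\ast)=h(u)$, so $\overline{F_w(u^\ast,0)}$ equals $F_w(u,\infty)$ up to a unimodular constant.

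\textbf{Main obstacle: the constants.} The remaining task is to show that the product of the unimodular constants is $1$. The approach is to check positivity at $u=\infty$. The left-hand side of \eqref{eq:residual_at_infinity_limit} at $u=\infty$ is the limit of $b_{\Omega_\alpha}(u,\infty)^nR_n(u,\infty)$. Using \eqref{near inf} and the convention that $P(\infty)$ is the leading coefficient, this equals $\Cap(\Gamma_\alpha)^nR_n(\infty,\infty)>0$. On the right-hand side, each factor is positive at infinity under the normalizations $b_{\Omega_\alpha}(\infty,-1)>0$, $b_{\Omega_\alpha}(\infty,0)>0$ and $F_w(\infty,\infty)>0$, with the algebraic factor equal to $1$ as noted. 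Since both sides are positive at $u=\infty$, the total unimodular constant must be $1$, which completes the identification.
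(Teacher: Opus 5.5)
Your proposal is correct and follows essentially the paper's route. You start from the reflection identity $b_{\Omega_\alpha}(u,\infty)^nR_n^{\Gamma_\alpha,w}(u,\infty)=\overline{b_{\Omega_\alpha}(u^\ast,\infty)^nR_n^{\Gamma_\alpha,w}(u^\ast,0)}$, transfer the locally uniform limit \eqref{eq:residual_at_zero_limit}, and identify the factors via \eqref{b*}--\eqref{b0} and the reflection invariance of $h$. The only differences are bookkeeping: you compute the algebraic factor directly in $u$ rather than via $\overline{s(\overline z)}=-s(z)$ (the fact you invoke, that the branch equal to $1$ at $u=0$ behaves like $-u$ at infinity, is exactly $s(z_\infty)=-1$), and you fix all unimodular constants at once by positivity at $u=\infty$.
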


\begin{proof}
Using \eqref{eq:mobius_transformation}, we observe that replacing $u$ with $u^*$ corresponds to replacing $z$ with $\overline{z}$. Recalling that $\overline{s(\overline{z})}=-s(z)$, we obtain the first factor on the right-hand side of \eqref{eq:residual_at_infinity_limit}. The remaining terms follow from \eqref{b*}--\eqref{b0}, together with the fact that $F_w(\infty, \infty)$ is positive.
\end{proof}

A key aspect of interest is determining the leading coefficient of $R_{n}^{\Gamma_\alpha,w}(\cdot,\infty)$, which equals $1/\|wT_n^{w}\|_{\Gamma_\alpha}$. Using \eqref{near inf}, we obtain
\[
   \frac{\Cap(\Gamma_\alpha)^n}{\|wT_n^w\|_{\Gamma_\alpha}} = 
   \lim_{u\rightarrow \infty}b_{\Omega_{\alpha}}(u,\infty)^nR_{n}^{\Gamma_\alpha,w}(u,\infty). 
\]
As a consequence of the previous proposition, we derive the following result.

\begin{corollary}
Let $w$ be the weight function defined in \eqref{eq:weight_function_form}. Then 
\begin{equation}
   \lim_{n\rightarrow \infty}\cW_n(\Gamma_\alpha,w) = 
   2\cos(\alpha/4)^2\exp\left(\int_{\Gamma_\alpha} \log |w(x)|\,d\mu_{\Gamma_\alpha}(x)\right).
\end{equation}
\end{corollary}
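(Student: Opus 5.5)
The plan is to evaluate the limit of the Proposition just proved at $u=\infty$. By \eqref{near inf}, the leading coefficient of $R_n^{\Gamma_\alpha,w}(\cdot,\infty)$ equals $1/\|wT_n^w\|_{\Gamma_\alpha}$. So $\cW_n(\Gamma_\alpha,w)^{-1}$ is the value at $u=\infty$ of $b_{\Omega_\alpha}(u,\infty)^nR_n^{\Gamma_\alpha,w}(u,\infty)$, once that function is extended analytically to $\infty$. The convergence in \eqref{eq:residual_at_infinity_limit} is uniform on compact subsets of $\Omega_\alpha$, and $\infty\in\Omega_\alpha$. Hence we may evaluate the limit function at $u=\infty$, and we only need to compute three factors there.

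\emph{First factor.} Write $\sqrt{(u-e^{i\alpha})(u-e^{-i\alpha})}=u\sqrt{1-2\cos\alpha/u+1/u^2}$, with the branch fixed by the normalization (value $1$ at $u=0$ for $s$, which becomes $-s$ after the $\ast$-reflection). Then
\[
\frac{u+1-\sqrt{\cdots}}{2(u+1)} \longrightarrow \frac{1}{2}\bigl(1+\cos\alpha\bigr)\cdot\frac{1}{1}\Big/ \dots
\]
More precisely, expand: $u+1-(u-\cos\alpha+O(1/u)) = 1+\cos\alpha+O(1/u)$. But we divide by $2(u+1)$, so this factor tends to $0$ like $(1+\cos\alpha)/(2u)$. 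This decay is compensated by the second factor.

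\emph{Second factor.} The quotient $b_{\Omega_\alpha}(u,-1)/b_{\Omega_\alpha}(u,0)$ has a simple pole at $\infty$, because $b_{\Omega_\alpha}(u,0)=ub_{\Omega_\alpha}(u,\infty)$ vanishes there. Using \eqref{ca}, $u\,b_{\Omega_\alpha}(u,0)^{-1}\to 1/b_{\Omega_\alpha}(\infty,0)^{2}\cdot b_{\Omega_\alpha}(\infty,0)$. More carefully, the product of the first two factors tends to
\[
\frac{(1+\cos\alpha)}{2}\cdot\frac{b_{\Omega_\alpha}(\infty,-1)}{\lim_{u\to\infty}u\,b_{\Omega_\alpha}(u,0)}.
\]
I will compute $b_{\Omega_\alpha}(\infty,-1)$ and $\lim u\,b_{\Omega_\alpha}(u,0)$ explicitly, via the conformal map $f$ from Lemma \ref{lem:logarithmic_integral}, or via the Möbius map $u(z)$ and the explicit Riemann map of $\Omega_0$. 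The target is that the product equals $1/(2\cos^2(\alpha/4))$. Note that $1+\cos\alpha=2\cos^2(\alpha/2)$, so the ratio of Riemann-map values must contribute $\cos^2(\alpha/4)^{-1}\cdot(2\cos^2(\alpha/2))^{-1}\cdot\tfrac12$, up to the capacity factor $\sin(\alpha/2)$. Cross-checking with the unweighted case $w\equiv1$ is the cleanest route. By \cite{thiran-detaille91}, or by \eqref{eq:eichinger_asymptotics} at $u=\infty$, the unweighted limit is $2\cos^2(\alpha/4)$. Since $F_1\equiv1$, the first two factors evaluated at $\infty$ must equal $(2\cos^2(\alpha/4))^{-1}$. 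This sidesteps the explicit computation entirely.

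\emph{Third factor.} By \eqref{hw} and \eqref{eq:outer_function}, $F_w(\infty,\infty)=\exp(h(\infty))=\exp\bigl(-\int\log|w|\,d\mu_{\Gamma_\alpha}\bigr)$. This holds because $\omega(\infty,\cdot;\Omega_\alpha)=\mu_{\Gamma_\alpha}$, and $\widetilde h(\infty)=0$ by normalization.

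Multiplying, $\cW_n(\Gamma_\alpha,w)^{-1}\to(2\cos^2(\alpha/4))^{-1}\exp(-\int\log|w|\,d\mu_{\Gamma_\alpha})$, and inverting gives the claim. The only delicate point is the evaluation of the constant from the first two factors, including the removable singularity and branch conventions at $\infty$. I will handle it by the comparison with the unweighted case $k=0$, which is covered by the same Proposition, together with the known unweighted limit. If a self-contained argument is preferred, I will instead compute it directly in the $z$-variable at $z=z_\infty$, using $b_{\Omega_0}$ expressed through the map $z\mapsto\sqrt{(z-1)/(z+1)}$ onto a half-plane.
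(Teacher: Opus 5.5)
Your final argument is correct, but the factor-by-factor analysis before it is wrong in two places. As a result, the explicit formula you write for the constant would give $0$.

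(i) Use the branch of \eqref{s}, with value $1$ at $u=0$. This is the branch in \eqref{eq:residual_at_infinity_limit}, since the first factor there is $\overline{(1+s(\overline{z}))/2}=(1-s(z))/2$. With this branch the root is not $\sim u$ at $\infty$. On $u=-t$, $t\ge 0$, the radicand $t^2+2t\cos\alpha+1\ge\sin^2\alpha>0$, so the root stays positive and equals $-u+\cos\alpha+O(1/u)$. The first factor therefore tends to $1$, not $0$; this is just $s(z_\infty)=-1$. Your expansion $u\sqrt{1-2\cos\alpha/u+1/u^2}$ is the opposite branch, which contradicts the normalization you yourself quote.

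(ii) $b_{\Omega_\alpha}(u,0)$ vanishes only at $u=0$. At $\infty$ it equals $\lim_{u\to\infty}u\,b_{\Omega_\alpha}(u,\infty)=\sin(\alpha/2)$ by \eqref{ca}. So the second factor is regular at $\infty$, not polar. Your displayed expression $\frac{1+\cos\alpha}{2}\,b_{\Omega_\alpha}(\infty,-1)/\lim_{u\to\infty}u\,b_{\Omega_\alpha}(u,0)$ is in fact $0$.

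The calibration you end up using survives these errors, because it never splits the $w$-independent product $G$ of the first two factors. The case $k=0$ ($w\equiv1$, $F_1\equiv1$) is covered by the same Proposition, so $G(\infty)=\lim_n 1/\cW_n(\Gamma_\alpha)=1/(2\cos^2(\alpha/4))$ by Thiran--Detaille. Multiplying by $F_w(\infty,\infty)$, which you compute correctly, gives the claim.

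The paper instead evaluates $G(\infty)$ directly, which is also what your $z$-variable fallback would amount to. The first factor is $1$. From $b_{\Omega_0}(z,0)=iz/(1+\sqrt{1-z^2})$ one gets $b_{\Omega_\alpha}(\infty,-1)=b_{\Omega_0}(z_\infty,0)=\sin(\alpha/2)/(2\cos^2(\alpha/4))$, whose numerator cancels against $b_{\Omega_\alpha}(\infty,0)=\sin(\alpha/2)$.

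The direct computation is self-contained and, at $k=0$, reproves the Thiran--Detaille limit. Your route is shorter but imports that limit, so it cannot serve as an independent derivation of the unweighted case.
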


\begin{proof}
It suffices to compute the limit of the right-hand side of \eqref{eq:residual_at_infinity_limit} as $u\to\infty$. Since $s(z_\infty)=-1$, the first factor tends to $1$. 
To evaluate $b_{\Omega_\alpha}(\infty, -1)$, we note that a direct calculation using the Joukowski transformation gives
\[
   b_{\Omega_0}(z,0)  = \frac{-1+\sqrt{1-z^2}}{iz} = \frac{iz}{1+\sqrt{1-z^2}},
\]
where the square root is taken with a branch cut along $\sfA_0$ and chosen so that $\sqrt{1}=1$.
Substituting $z_\infty = -i\tan(\alpha/2)$, we obtain
\[
   b_{\Omega_\alpha}(\infty, -1) = b_{\Omega_0}(z_\infty, 0) 
   = \frac{\tan(\alpha/2)}{1+\sqrt{1+\tan^2(\alpha/2)}} 
   = \frac{\sin(\alpha/2)}{2\cos^2(\alpha/4)}.
\]
However, the numerator cancels out, as $b_{\Omega_\alpha}(\infty, 0) = \sin(\alpha/2)$ by \eqref{ca}.
Finally, from \eqref{hw}--\eqref{eq:outer_function}, we see that 
\[
   F_w(\infty,\infty)=\exp\left(-\int_{\Gamma_\alpha} \log |w(x)|\,d\mu_{\Gamma_\alpha}(x)\right),
\]
since the normalization ensures that $F_w(\infty,\infty)>0$. This completes the proof.
\end{proof}


\subsection{Residual polynomials relative to an arbitrary point}

To complete the picture, we determine the asymptotic behavior of the residual polynomial relative to an arbitrary point in $\overline{\C}\setminus\partial\D$. By symmetry, it suffices to consider points $u_0\in\D$. Indeed, for $|u_0|>1$, the relation $R_{n}^{\Gamma_\alpha,w}(\cdot,u_0)= R_{n}^{\Gamma_\alpha,w}(\cdot,u_0^\ast)^\ast$ implies the identity 
\begin{equation}b_{\Omega_\alpha}(u,\infty)^n R_{n}^{\Gamma_\alpha,w}(u,u_0) = e^{i\gamma}\overline{b_{\Omega_\alpha}(u^\ast,\infty)^nR_{n}^{\Gamma_\alpha,w}(u^\ast,u_0^\ast)}
	\label{eq:reflection_formula}
\end{equation}
for some $\gamma\in [0,2\pi)$.


\begin{proposition}
	Let $w$ be the weight function defined in \eqref{eq:weight_function_form}. For a fixed $u_0\in \D$, we define the parameters
\[
   \theta_0\in [0,2\pi), \;\; \tilde{\alpha}\in (0,\pi), \;\; z_{u_0}\in\{z : \Im z>0\}, 
   \; \mbox{ and } \; x_0\in (-1,1)
\] 	
    by the relations
	\begin{equation}
		\frac{e^{\pm i\alpha}-u_0}{1-e^{\pm i\alpha}u_0} = e^{i(\pm \tilde\alpha+\theta_0)},
		\label{eq:def_theta_0}
	\end{equation}
	\begin{equation}
		z_{u_0} := i\tan(\alpha/2)\frac{1+u_0}{1-u_0},
		\label{eq:z_u_0_def}
	\end{equation}
	and
	\begin{equation}
	x_0:=\tan(\alpha/2)\frac{\sin(\theta_0/2)-\Im(u_0e^{-i\theta_0/2})}{\cos(\theta_0/2)-\Re(u_0e^{-i\theta_0/2})}.
	\label{eq:x_0_def}
	\end{equation}
	Then, uniformly on compact subsets of $\Omega_0$, we have the asymptotic formula
	\begin{equation}
		\lim_{n\rightarrow \infty}b_{\Omega_\alpha}\bigl(u(z),\infty\bigr)^{n}R_{n}^{\Gamma_\alpha,w}\bigl(u(z),u_0\bigr) = \frac{1+s(z,z_{u_0})}{2}\frac{b_{\Omega_0}(z,x_0)}{b_{\Omega_0}(z,\overline{z}_{u_0})}F_w\bigl(u(z),u_0\bigr),
		\label{eq:asymptotics_z_domain}
	\end{equation}
	where
	\begin{equation} \label{su}
	s(z,z_{u_0}):=\frac{z_{u_0}-x_0}{\sqrt{z_{u_0}^2-1}}\frac{\sqrt{z^2-1}}{z-x_0},
	\; \mbox{ with } \; s(z_{u_0},z_{u_0})=1. \end{equation}
Here, we assume that $b_{\Omega_0}(z_{u_0}, x_0)$ and $b_{\Omega_0}(z_{u_0}, \overline{z}_{u_0})$ are positive, and $F_w$ is defined in \eqref{eq:outer_function}. 
	\label{prop:main_propsition}
\end{proposition}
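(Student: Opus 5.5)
The plan is to reduce the case of a general $u_0\in\D$ to the already settled case $u_0=0$ by means of a disk automorphism, after which we redo the transfer to $\Omega_0$. Let $\phi(u):=(u-u_0)/(1-\overline{u}_0u)$. It maps $\T$ onto $\T$, and by \eqref{eq:def_theta_0} it sends the arc $\Gamma_\alpha$ onto the rotated arc $e^{i\theta_0}\Gamma_{\tilde\alpha}$, with $\phi(u_0)=0$. For $P\in\cP_n$ set
\[
\widetilde P(v):=P\bigl(\phi^{-1}(v)\bigr)\,(1+\overline{u}_0 v)^n .
\]
This is again a polynomial of degree at most $n$, and the assignment $P\mapsto \widetilde P$ is a bijection of $\cP_n$. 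Since $|1+\overline{u}_0v|$ is the reciprocal of the modulus of a polynomial that is nonvanishing on $\overline\D$, the weighted problem for $(\Gamma_\alpha,w,u_0)$ becomes a weighted problem on $e^{i\theta_0}\Gamma_{\tilde\alpha}$ at the point $0$. The new weight has the form
\[
\widetilde w(v)=w\bigl(\phi^{-1}(v)\bigr)\,(1+\overline{u}_0v)^{-n}.
\]
It is still the reciprocal of a polynomial with zeros outside $\overline\D$: the zeros $\phi(u_j)$ and the point $-1/\overline{u}_0$ all lie outside, since $|u_j|>1$ and $|u_0|<1$. The obstacle is that this weight now depends on $n$ through the factor $(1+\overline u_0 v)^{-n}$, so we cannot simply quote \eqref{eq:residual_at_zero_limit}. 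We also rotate by $e^{-i\theta_0}$ so that the arc becomes $\Gamma_{\tilde\alpha}$.

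Instead of quoting \eqref{eq:residual_at_zero_limit}, I would go back to the $\Omega_0$ picture and Yuditskii's explicit formula \eqref{eq:extremal_solution}. That formula was stated for a weight with $n-k+1$ copies of the pole $z_\infty$ plus finitely many extra poles, and it works equally well when some pole has multiplicity growing with $n$. Composing with $u(z)$, which is what \eqref{eq:z_u_0_def} encodes, the normalization point $z_0$ is replaced by $z_{u_0}=u^{-1}(u_0)$, i.e. $u(z_{u_0})=u_0$. The weight $W_n$ then carries the poles $z_\infty$ with multiplicity of order $n$ and $\overline{z}_j$, all in the lower half-plane, and the conjugation symmetry is now centered at $z_{u_0}$ rather than $z_0$. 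Concretely, in \eqref{eq:extremal_solution} the factor $\bigl((z-z_0)/(z-z_\infty)\bigr)^{n-k+1}$ is replaced by the corresponding factor with $z_{u_0},\overline{z}_{u_0}$, and the symmetric gap $I_n=[-x_n,x_n]$ is replaced by a gap $I_n\ni x_0$. Its location is fixed by the harmonic-measure balance \eqref{eq:sum_harmonic_measures}, and the function $s_n$ becomes
\[
s_n(z)=\sqrt{\frac{z^2-1}{(z-a_n)(z-b_n)}}\cdot\mathrm{const}.
\]

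The steps then follow the proof of Proposition~\ref{prop:asymptotics_of_real_problem}. First, show that $I_n=[a_n,b_n]$ shrinks to a single point $x_0$ and that $n\,\omega(z_\infty,\cdot;\Omega_n)|_{I_n}\to\delta_{x_0}$. The Harnack argument of Lemma~\ref{lem:intervals_shrinking} gives the total mass and the shrinking as before. Second, identify $x_0$ as the point determined by \eqref{eq:x_0_def}. The point is that $x_0$ must be the image under $u^{-1}$ of the point of $\T\setminus\Gamma_\alpha$ that is "antipodal" after the automorphism, namely $\phi^{-1}(-e^{i\theta_0})$. This is the analogue of $-1$ (equivalently $z=0$) in the $u_0=0$ case. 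The formula \eqref{eq:x_0_def} should be verified by direct computation from $u(x_0)=\phi^{-1}(-e^{i\theta_0})$.

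Third, with $s_n^2\to s(\cdot,z_{u_0})^2$ from \eqref{su}, the second summand of \eqref{eq:extremal_solution}, after multiplication by the Blaschke-type factor $B_n^{\Omega_0}$, tends to $0$ exactly as in Lemma~\ref{lem:limiting_behaviours}. The limit of $b^{n-k}$ ratios produces $b_{\Omega_0}(z,x_0)/b_{\Omega_0}(z,\overline{z}_{u_0})$ via $\delta_{x_0}$. Finally, dividing by the weight yields the outer function, and the argument used for $F_w(\cdot,0)$ shows it is $F_w(u(z),u_0)$ once the normalization at $z_{u_0}$ is fixed. Convergence of the full sequence follows from normality and positivity at $z_{u_0}$.

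The main obstacle I expect is the second step: rigorously pinning the limit point of the gaps to $x_0$. In the $u_0=0$ case symmetry forced the gap to be centered at $0$; here there is no such symmetry. I would try to determine the limit by passing to the limit in \eqref{eq:sum_harmonic_measures} differentiated appropriately, or better, by noting that any limit point $x^*$ yields a limit function of the form in \eqref{eq:asymptotics_z_domain} with $x^*$ in place of $x_0$. Analyticity at $x^*$ then requires the pole of $s$ at $x^*$ to cancel the zero of $b_{\Omega_0}(\cdot,x^*)$. The relation $s(\overline{z}_{u_0})=-1$, needed to cancel the pole at $\overline{z}_{u_0}$, should single out $x^*=x_0$ uniquely.
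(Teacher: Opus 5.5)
Your plan has a genuine gap, and it sits one step before the one you flag. The explicit representation you want to use in the $z$-plane is not available. Formula \eqref{eq:extremal_solution} is stated for the purely imaginary normalization point $z_0=i\tan(\alpha/2)$. That is what makes the constant $(z_0^2-x_n^2)/(z_0^2-1)$ in $s_n$ positive, which gives $s_n(\overline{z})=-\overline{s_n(z)}$ (used in Lemma~\ref{Bn}) and forces the gap to be symmetric. Your point $z_{u_0}$ is in general off the imaginary axis. The extremality of a modified formula is exactly the content of Yuditskii's theorem, so it cannot simply be asserted.

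The modification you describe is also not the right one. $W_n$ still has its pole of order $n-k$ at $z_\infty=\overline{z}_0$. The Blaschke factor must therefore have zeros at the reflections of the actual poles of $W_n$ plus one at the normalization point, i.e.\ $\frac{z-z_{u_0}}{z-\overline{z}_{u_0}}\bigl(\frac{z-z_0}{z-z_\infty}\bigr)^{n-k}\prod_{j}\frac{z-z_j}{z-\overline{z}_j}$. Correspondingly, $B_n^{\Omega_n}$ contains $b_{\Omega_n}(\cdot,\overline{z}_{u_0})$ only once. The $(n-k+1)$-st power of $\frac{z-z_{u_0}}{z-\overline{z}_{u_0}}$ is incompatible with $W_n$.

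Moreover, an asymmetric gap $[a_n,b_n]$ has two unknowns, while the harmonic-measure balance is one equation. The second condition is the reality of the constant in $s_n$, i.e.\ $s_n(\overline{z}_{u_0})=-1$. This is why your second step stays open. Your uniqueness heuristic can be made to work: analyticity of a limit at $\overline{z}_{u_0}$ forces $(z_{u_0}-x^*)/\sqrt{z_{u_0}^2-1}\in\R$, which has exactly one solution $x^*\in(-1,1)$. But you would still need the representation itself, an argument that the gaps do not drift to $\pm1$ (Lemma~\ref{lem:intervals_shrinking} uses nested symmetric intervals), and a check that $x^*$ equals \eqref{eq:x_0_def}.

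The missing idea, which the paper uses, is to compose one step further. Take $\varphi_1$ (your disk automorphism followed by the rotation $e^{-i\theta_0}$) and $\varphi_2(v)=i\tan(\tilde\alpha/2)\frac{1+v}{1-v}$. Then $\Phi=u^{-1}\circ\varphi_1^{-1}\circ\varphi_2^{-1}$ is a real M\"obius automorphism of $\Omega_0$ fixing $\pm1$, hence $\Phi(\zeta)=(\zeta+x_0)/(1+x_0\zeta)$. It sends $\zeta_0=i\tan(\tilde\alpha/2)\mapsto z_{u_0}$, $\overline{\zeta}_0\mapsto\overline{z}_{u_0}$ and $0\mapsto x_0$. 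Your identity $u(x_0)=\phi^{-1}(-e^{i\theta_0})$ (with your $\phi$) is exactly the paper's check $\Phi(0)=x_0$.

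In the $\zeta$-plane the normalization point lies on the imaginary axis, so \eqref{eq:extremal_solution} and Proposition~\ref{prop:asymptotics_of_real_problem} apply with the symmetric gap $[-x_n,x_n]\to0$. Pushing forward by $\Phi$ turns $s$ and $b_{\Omega_0}(\zeta,0)/b_{\Omega_0}(\zeta,\overline{\zeta}_0)$ into $s(z,z_{u_0})$ and $b_{\Omega_0}(z,x_0)/b_{\Omega_0}(z,\overline{z}_{u_0})$. The limiting gap point is then $\Phi(0)=x_0$ for free, with no uniqueness argument.

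Your remark that a pole of growing multiplicity must be admitted is correct, and it is needed on this route too. For $u_0\neq0$, the order-$(n-k)$ pole of the transported weight sits at $\varphi_2(\varphi_1(\infty))$, not at $\overline{\zeta}_0$ as the paper's displayed $W_n$ suggests. So one uses \eqref{eq:extremal_solution} with that point repeated among the $\overline{z}_j$ and reruns the Harnack argument. The limit is unaffected: that heavy factor is absorbed by $b_{\Omega_\alpha}(u,\infty)^n$, and $b_{\Omega_0}(\zeta,\overline{\zeta}_0)$ comes from the single extra factor at the reflection of the normalization point.
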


\begin{proof}
Our strategy parallels the approach in Section \ref{subsec:res_origin}, where we related the residual polynomial at the origin to a weighted problem on $\Omega_0$. Here, we adapt this method for evaluation at an arbitrary point $u_0 \in \mathbb{D}$.
The key idea is to introduce an intermediate domain $\Omega_{\tilde{\alpha}}$ where the point $u_0$ is mapped to the origin. This allows us to utilize the known asymptotics for the origin case (evaluated at a shifted parameter) and then transfer the results back to our original setting.

First, consider the Möbius transformation
\[
    T_{u_0}(u) = \frac{u - u_0}{1 - \overline{u}_0 u},
\]
which maps $\mathbb{D}$ conformally onto itself and sends $u_0$ to $0$. Since $T_{u_0}$ preserves the unit circle, the image of the arc $\Gamma_\alpha$ under this map is another arc on the unit circle. Consequently, there exist parameters $\theta_0 \in [0, 2\pi)$ and $\tilde{\alpha} \in (0, \pi)$ satisfying \eqref{eq:def_theta_0} such that the rotated map $u \mapsto e^{-i\theta_0}T_{u_0}(u)$ maps $\Gamma_\alpha$ onto $\Gamma_{\tilde{\alpha}}$.
We define the conformal maps $\varphi_1: \Omega_\alpha \rightarrow \Omega_{\tilde{\alpha}}$ and $\varphi_2: \Omega_{\tilde{\alpha}} \rightarrow \Omega_0$ by
\begin{equation}
    \varphi_1(u) = e^{-i\theta_0} \frac{u - u_0}{1 - \overline{u}_0 u} \quad \mbox{ and } \quad
    \varphi_2(v) = i \tan(\tilde{\alpha}/{2})\frac{1 + v}{1 - v}.
\end{equation}
The mapping $\varphi_1$ is illustrated in Figure \ref{fig:conformal_maps_1}.

\begin{figure}[h]
\begin{center}
\begin{tikzpicture}[scale=1.7]

\begin{scope}
    \draw[dashed, very thick] (1,0) arc[start angle=0, end angle=360, radius=1];
    \draw[very thick] (1,0) arc[start angle=0, end angle=30, radius=1];
    \draw[very thick] (1,0) arc[start angle=0, end angle=-30, radius=1];

    \draw[->] (-1.5,0) -- (1.5,0) node[right] {};
    \draw[->] (0,-1.5) -- (0,1.5) node[above] {};
	\draw[thick,fill=white] (0.866,0.5) circle(0.05)node[above right] {$e^{i\alpha}$};
    \draw[thick,fill=white] (0.866,-0.5) circle(0.05)node[below right] {$e^{-i\alpha}$};
    \draw[thick,fill=black] (0.5,0.2) circle(0.03) node[above left] {$u_0$};

    \node[below right] at (1,0) {$1$};
	
    \node at (1,1.4) {$\Omega_\alpha$};
\end{scope}

\begin{scope}[xshift=5cm]
    \draw[dashed, very thick] (1,0) arc[start angle=0, end angle=360, radius=1];
    \draw[very thick] (1,0) arc[start angle=0, end angle=120, radius=1];
    \draw[very thick] (1,0) arc[start angle=0, end angle=-45, radius=1];

    \draw[->] (-1.5,0) -- (1.5,0) node[right] {};
    \draw[->] (0,-1.5) -- (0,1.5) node[above] {};
	\draw[thick,fill=white] (-0.5,0.866) circle(0.05)node[above left] {$e^{i(\tilde \alpha+\theta_0)}$};
    \draw[thick,fill=white] (0.707,-0.707) circle(0.05)node[below right] {$e^{i(-\tilde \alpha+\theta_0)}$};
    \draw[thick,fill=white] (0.7933,0.608) circle(0.05)node[above right] {$e^{i\theta_0}$};
    \draw[thick,fill=black] (0,0) circle(0.03) node[above left] {$0$};

    \node[below right] at (1,0) {$1$};
	
\end{scope}

\draw[->, thick] (1.5,0.5) to[out=30, in=150] (3.5,0.5);

\node[above] at (2.5,0.85) {$\frac{u-u_0}{1-\overline{u}_0 u}$};
\begin{scope}[xshift = 2.5cm, yshift=-3cm]
    \draw[dashed, very thick] (1,0) arc[start angle=0, end angle=360, radius=1];
    \draw[very thick] (1,0) arc[start angle=0, end angle=82.5, radius=1];
    \draw[very thick] (1,0) arc[start angle=0, end angle=-82.5, radius=1];

    \draw[->] (-1.5,0) -- (1.5,0) node[right] {};
    \draw[->] (0,-1.5) -- (0,1.5) node[above] {};
	\draw[thick,fill=white] (0.1305,0.9914) circle(0.05)node[above right] {$e^{i\tilde \alpha}$};
    \draw[thick,fill=white] (0.1305,-0.9914) circle(0.05)node[below right] {$e^{-i\tilde \alpha}$};
    \draw[thick,fill=black] (0,0) circle(0.03) node[above left] {$0$};

    \node[below right] at (1,0) {$1$};
	\node at (-1,-1.2) {$\Omega_{\tilde \alpha}$};

\end{scope}

\draw[->, thick] (5.5,-1.3) to[out=-120, in=20] (3.8,-2.5);

\node[above] at (4.8,-2.6) {$e^{-i\theta_0}$};

\draw[->, thick] (-0.5,-1.3) to[out=-60, in=160] (1.2,-2.5);
\node[above] at (0.2,-2.6) {$\varphi_1(u)$};

\end{tikzpicture}
\end{center}

\caption{The transformation of the domain $\Omega_\alpha$ (top left) to $\Omega_{\tilde{\alpha}}$ (bottom). The mapping $\varphi_1$ sends $u_0$ to the origin.}
\label{fig:conformal_maps_1}
\end{figure}
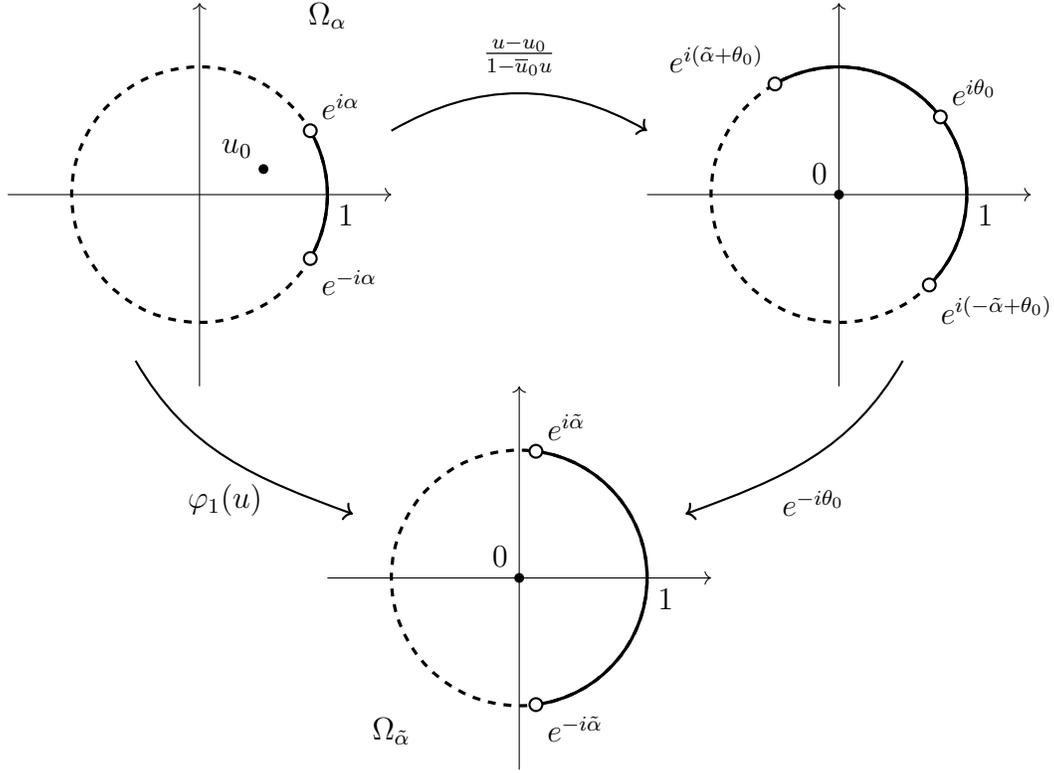
These mappings are Möbius transformations with inverses
\begin{equation}
    \varphi_1^{-1}(v) = \frac{e^{i \theta_0}v + u_0}{1 + \overline{u}_0 e^{i \theta_0} v} \quad \mbox { and } \quad
    \varphi_2^{-1}(\zeta) = \frac{\zeta - \zeta_0}{\zeta - \overline{\zeta}_0}, \quad \text{where } \; \zeta_0 = i \tan(\tilde{\alpha}/{2}).
\end{equation}
Let $\phi(u) := (\varphi_2 \circ \varphi_1)(u)$, so that $\phi(u_0) = \varphi_2(0) = \zeta_0$. Applying the same transport argument as in Lemma \ref{lem:identification}, we obtain the relation
\begin{equation}
    R_n^{\Gamma_\alpha,w}\bigl(\phi^{-1}(\zeta), \phi^{-1}(\zeta_0)\bigr) w\bigl(\phi^{-1}(\zeta)\bigr) = R_n^{\sfA_0,W_n}(\zeta, \zeta_0) W_n(\zeta),
    \label{eq:residual_relation_auxiliary_domain}
\end{equation}
where the weight $W_n$ on $\Omega_0$ is given by
\begin{equation}
    W_n(\zeta) = e^{i \theta} (\zeta - \zeta_\infty)^{k - n} \prod_{j=1}^k (\zeta - \overline{\zeta}_j)^{-1},
    \label{eq:transferred_weight_2}
\end{equation}
with $\zeta_j = \phi(u_j)$, $\zeta_\infty = \overline{\zeta}_0 = -i \tan(\tilde{\alpha}/{2})$, and $\theta\in[0, 2\pi)$ chosen such that $W_n(\zeta_0) > 0$.
The asymptotics of $R_n^{\sfA_0, W_n}(\cdot, \zeta_0) W_n(\cdot)$ is provided by Proposition \ref{prop:asymptotics_of_real_problem}. Substituting this into \eqref{eq:residual_relation_auxiliary_domain}, we find that uniformly on compact subsets of $\Omega_0$,
\begin{equation}
    \lim_{n\rightarrow \infty}b_{\Omega_\alpha}\bigl(\phi^{-1}(\zeta),\infty\bigr)^nR_n^{\Gamma_\alpha,w}\bigl(\phi^{-1}(\zeta),u_0\bigr) = \frac{1}{2} \biggl(1+\frac{\zeta_0 \sqrt{\zeta^2-1}}{\zeta \sqrt{\zeta_0^2-1}}\biggr)\frac{b_{\Omega_0}(\zeta,0)}{b_{\Omega_0}(\zeta,\zeta_\infty)}F_w\bigl(\phi^{-1}(\zeta),u_0\bigr).
    \label{eq:auxilliary_limit}
\end{equation}
Here, the normalization ensures $b_{\Omega_0}(\zeta_0,0)>0$ and $b_{\Omega_0}(\zeta_0,\zeta_\infty)>0$.

To express this result in terms of the variable $z \in \Omega_0$, we recall the conformal map $u: \Omega_0 \to \Omega_\alpha$ defined in \eqref{eq:mobius_transformation}. Its inverse is given by
\begin{equation}
    u^{-1}(\xi) = i\tan(\alpha/{2})\frac{1+\xi}{1-\xi}.
\end{equation}
We then consider the automorphism $\Phi : \Omega_0 \to \Omega_0$ given by the composition
\begin{equation}
    \Phi = u^{-1} \circ \varphi_1^{-1} \circ \varphi_2^{-1}.
\end{equation}
We observe that $\Phi$ maps the real line to itself. Moreover, by construction, we have
\[
    \Phi(\zeta_0) = u^{-1}\Bigl(\varphi_1^{-1}\bigl(\varphi_2^{-1}(\zeta_0)\bigr)\Bigr) = u^{-1}\bigl(\varphi_1^{-1}(0)\bigr) = u^{-1}(u_0) = z_{u_0}.
\]
To determine the image of the origin, we calculate
\begin{align*}
	\Phi(0) & = \bigl( u^{-1}\circ\varphi_1^{-1}\bigr)(-1) 
	 = u^{-1}\left(\frac{u_0-e^{i\theta_0}}{1-\overline{u}_0e^{i\theta_0}}\right) \\
	& = i\tan\left(\frac{\alpha}{2}\right)\frac{1-\overline{u}_0e^{i\theta_0}+u_0-e^{i\theta_0}}{1-\overline{u}_0e^{i\theta_0}-u_0+e^{i\theta_0}}.
\end{align*}
Using Euler's formula and the definition of $x_0$ in \eqref{eq:x_0_def}, this simplifies to $\Phi(0) = x_0$. 
Since $\Phi$ is an automorphism of $\Omega_0$ mapping the origin to $x_0 \in (-1,1)$, it must be of the form
\begin{equation}
    \Phi(\zeta) = \frac{\zeta + x_0}{1 + x_0 \zeta}, \quad \mbox{ with inverse } \; \Phi^{-1}(z) = \frac{z - x_0}{1 - x_0 z}.
    \label{eq:Phi_def}
\end{equation}

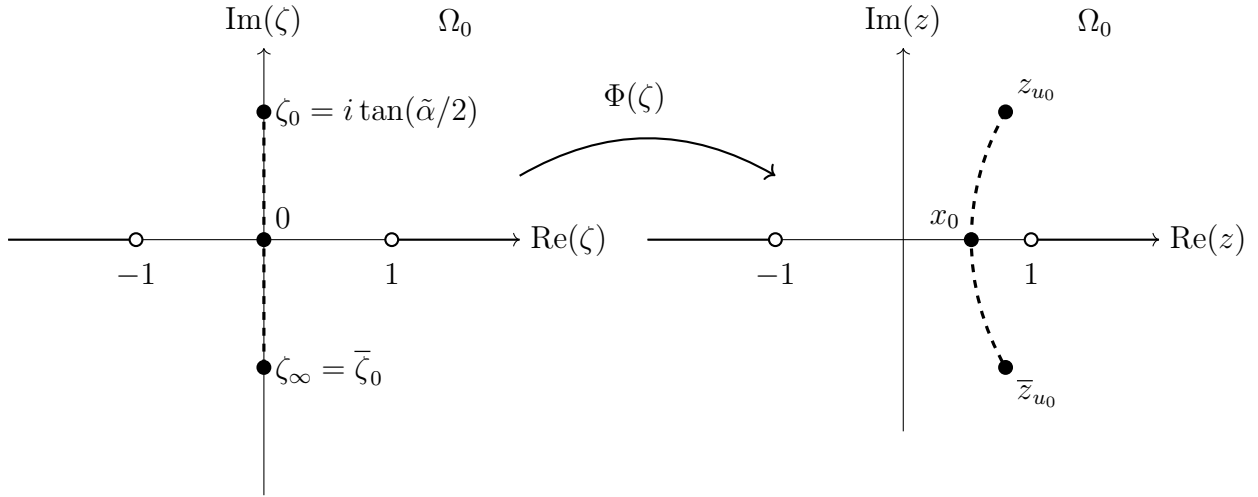
\begin{figure}[h]
\begin{center}
\begin{tikzpicture}[scale=1.7]

\begin{scope}
    \draw[->] (-2,0) -- (2,0) node[right] {$\text{Re}(\zeta)$};
    \draw[->] (0,-2) -- (0,1.5) node[above] {$\text{Im}(\zeta)$};
    
    \draw[thick] (-2,0) -- (-1,0);
    \draw[thick] (1,0) -- (2,0);

    \draw[thick,fill=white] (-1,0) circle(0.05);
    \draw[thick,fill=white] (1,0) circle(0.05);

    \node[below] at (-1,-0.1) {$-1$};
    \node[below] at (1,-0.1) {$1$};
    \draw[thick,fill=black] (0,1) circle(0.05) node[right] {$\zeta_0 = i\tan(\tilde\alpha/{2})$};
    \draw[thick,fill=black] (0,-1) circle(0.05) node[right] {$\zeta_\infty=\overline\zeta_0$};
    \draw[thick,fill=black] (0,0) circle(0.05) node[above right] {$0$};
    	\draw[dashed, very thick] (0,-1) -- (0,1);
    \node at (1.5,1.7) {$\Omega_0$};
\end{scope}

\begin{scope}[xshift=5cm]
    \draw[->] (-2,0) -- (2,0) node[right] {$\text{Re}(z)$};
    \draw[->] (0,-1.5) -- (0,1.5) node[above] {$\text{Im}(z)$};

	\draw[dashed, very thick] (0.8,1) arc[start angle=150, end angle=210, radius=2];

	\draw[thick,fill=black] (0.5320508076,0) circle(0.05)node[above left] {$x_0$};
	\draw[thick,fill=black] (0.8,1) circle(0.05)node[above right] {$z_{u_0}$};
    \draw[thick,fill=black] (0.8,-1) circle(0.05)node[below right] {$\overline{z}_{u_0}$};
    
    \draw[thick] (-2,0) -- (-1,0);
    \draw[thick] (1,0) -- (2,0);
	
    \draw[thick,fill=white] (-1,0) circle(0.05);
    \draw[thick,fill=white] (1,0) circle(0.05);

    \node[below] at (-1,-0.1) {$-1$};
    \node[below] at (1,-0.1) {$1$};
    \node at (1.5,1.7) {$\Omega_0$};
    
\end{scope}

\draw[->, thick] (2,0.5) to[out=30, in=150] (4,0.5);

\node[above] at (2.9,0.9) {$\Phi(\zeta)$};
\end{tikzpicture}
\end{center}

\caption{The Möbius transformation $\Phi$ mapping the auxiliary parameter plane (left) to the original parameter plane (right). The map is determined by the correspondence of the points $0 \mapsto x_0$ and $\zeta_0 \mapsto z_{u_0}$.}
\label{fig:conformal_maps_2}
\end{figure}

Substituting $\zeta = \Phi^{-1}(z)$ into the terms of \eqref{eq:auxilliary_limit}, we compute
\begin{equation}
    \frac{\zeta_0 \sqrt{\zeta^2-1}}{\zeta \sqrt{\zeta_0^2-1}}
    = \frac{\Phi^{-1}(z_{u_0})\sqrt{\Phi^{-1}(z)^2-1}}{\Phi^{-1}(z)\sqrt{\Phi^{-1}(z_{u_0})^2-1}}
    = \frac{z_{u_0}-x_0}{\sqrt{z_{u_0}^2-1}}\frac{\sqrt{z^2-1}}{z-x_0}
    = s(z, z_{u_0}).
\end{equation}
Similarly, the ratio of Blaschke factors transforms as
\begin{equation}
    \frac{b_{\Omega_0}(\zeta,0)}{b_{\Omega_0}(\zeta,\zeta_\infty)}
    = \frac{b_{\Omega_0}\bigl(\Phi^{-1}(z), \Phi^{-1}(x_0)\bigr)}{b_{\Omega_0}\bigl(\Phi^{-1}(z),\Phi^{-1}(\overline {z}_{u_0})\bigr)}
    = \frac{b_{\Omega_0}(z,x_0)}{b_{\Omega_0}(z,\overline{z}_{u_0})}.
\end{equation}
Finally, noting that $\phi^{-1}(\zeta) = u(z)$, substituting these identities into \eqref{eq:auxilliary_limit} yields the desired asymptotic formula \eqref{eq:asymptotics_z_domain}.
\end{proof}

We summarize our findings for the domain $\Omega_\alpha$, presenting the results in a form consistent with \cite{eichinger17}. Recall the mapping \(\lambda : \Omega_\alpha \to \mathsf{S}_{\pi/4}\) defined in \eqref{la} by 
\begin{equation}
	\lambda(u) := \left(\frac{ue^{i\alpha}-1}{u-e^{i\alpha}}\right)^{1/4}.
	\label{eq:lambda}
\end{equation}

\begin{theorem}
	\label{thm:weighted_residual_asymptotics}
	Let $w$ be the weight function defined in \eqref{eq:weight_function_form}. For a fixed $u_0\in \D$, we have 
	\begin{multline}
	 \lim_{n\rightarrow \infty}b_{\Omega_\alpha}(u,\infty)^nR_n^{\Gamma_\alpha,w}(u,u_0) \\ = \frac{1}{2}\left(1+\frac{h(\lambda(u),\lambda(u_0))}{h(\lambda(u_0),\lambda(u_0))}\right) \frac{|\lambda(u_0)|^2 \bigl(\lambda(u)^2-|\lambda(u_0)|^2\bigr) \bigl(\lambda(u)^2+\lambda(u_0)^2\bigr)}{\lambda_0^2 \, \bigl(\lambda(u)^2+|\lambda(u_0)|^2\bigr) \bigl(\lambda(u)^2-\overbar{\lambda(u_0)^2}\bigr)} F_w(u,u_0)  \quad
		\label{eq:local_uniform_asymptotics}
	\end{multline}
	uniformly on compact subsets of $\Omega_\alpha$, where the auxiliary function $h$ is given by
    \begin{equation} \label{h}
    	h(\lambda,\lambda_0) = \frac{\lambda^2}{(\lambda^2-|\lambda_0|^2)(\lambda^2+|\lambda_0|^2)}.
    \end{equation}
\end{theorem}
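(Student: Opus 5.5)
The theorem is Proposition \ref{prop:main_propsition} rewritten in the variable $\lambda$. I will take \eqref{eq:asymptotics_z_domain} as given, with $z=u^{-1}(u)$ and $z_{u_0}=u^{-1}(u_0)$ as in \eqref{eq:z_u_0_def}. Then I will express the two $z$-dependent factors, $\tfrac12(1+s(z,z_{u_0}))$ and $b_{\Omega_0}(z,x_0)/b_{\Omega_0}(z,\overline z_{u_0})$, as explicit algebraic functions of $\lambda=\lambda(u)$ and $\lambda_0=\lambda(u_0)$. The factor $F_w(u,u_0)$ is common to both sides and needs no work.

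The first step is to relate $z$ and $\lambda$. The Möbius map $u\mapsto (ue^{i\alpha}-1)/(u-e^{i\alpha})$ composed with $u(z)$ is a Möbius map of $\Omega_0$ sending $\pm1$ to $0,\infty$. Hence it equals $c\,(1-z)/(1+z)$ for a unimodular constant $c$, and checking the point $u=e^{-i\alpha}$ (where $\lambda=0$) fixes the orientation. So $\lambda^4$ is a linear fractional function of $z$ with $\Omega_0$ going onto the slit plane, and $\lambda^2$ is the standard map $\Omega_0\to\C_+$. Concretely I expect $\lambda^2 = \sqrt{(1-z)/(1+z)}$ up to a fixed rotation, or $i\sqrt{\cdot}$. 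The sign is to be checked against $\mathsf S_{\pi/4}$. It follows that $\sqrt{z^2-1}$ and $z$ are rational in $\lambda^2$, e.g. $z=(1-\lambda^4)/(1+\lambda^4)$ and $\sqrt{z^2-1}\propto \lambda^2/(1+\lambda^4)$ up to the same constant.

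In these coordinates the conjugation $z\mapsto\overline z$ becomes $\lambda^2\mapsto \overline{\lambda^2}$, up to the reflection built into the half-plane. So $\overline z_{u_0}$ corresponds to $\lambda^2=\overline{\lambda_0^2}$, or to $-\overline{\lambda_0^2}$ depending on the orientation. The real point $x_0$ is defined via the automorphism $\Phi$ with $\Phi(0)=x_0$. Since $z=0$ corresponds to $\lambda^4=1$, i.e. $\lambda^2=1$, the automorphism $\Phi$ acts on $\lambda^2$ as a dilation $\lambda^2\mapsto t\lambda^2$ with $t>0$. Because $\Phi$ sends the imaginary point $\zeta_0$ to $z_{u_0}$, and imaginary $\zeta$ correspond to $|\lambda^2|=1$, one gets $t=|\lambda_0|^2$. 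Therefore $x_0$ corresponds to $\lambda^2=|\lambda_0|^2$, and I will verify this against \eqref{eq:x_0_def}.

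Now, on $\C_+$ in the variable $\mu=\lambda^2$, the Riemann map vanishing at $a$ is $b(\mu,a)=e^{i\gamma}(\mu-a)/(\mu+\overline a)$. This gives
\[
\frac{b_{\Omega_0}(z,x_0)}{b_{\Omega_0}(z,\overline z_{u_0})}
= C\,\frac{(\lambda^2-|\lambda_0|^2)\,(\lambda^2+\lambda_0^2)}{(\lambda^2+|\lambda_0|^2)\,(\lambda^2-\overline{\lambda_0^2})},
\]
where $C$ is fixed by positivity at $u=u_0$. Similarly, $s(z,z_{u_0})$ is the ratio $\frac{\sqrt{z^2-1}}{z-x_0}\big/\frac{\sqrt{z_{u_0}^2-1}}{z_{u_0}-x_0}$. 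Substituting, and using that $z-x_0$ is proportional to $(\lambda^4-|\lambda_0|^4)/(1+\lambda^4)$ (since $x_0\leftrightarrow\mu=|\lambda_0|^2$), the factors $1+\lambda^4$ cancel and $s(z,z_{u_0})=h(\lambda,\lambda_0)/h(\lambda_0,\lambda_0)$ with $h$ as in \eqref{h}. Evaluating $C$ at $\lambda=\lambda_0$ gives the prefactor $|\lambda_0|^2/\lambda_0^2$, after simplifying $(|\lambda_0|^2+\lambda_0^2)/(\lambda_0^2+\overline{\lambda_0^2})$ against the conjugate terms. This should reproduce exactly the middle factor in \eqref{eq:local_uniform_asymptotics}.

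The main obstacle is bookkeeping of branches and orientation. One must pin down whether $\lambda^2$ or $-\lambda^2$, and $\overline{\lambda_0^2}$ versus $-\overline{\lambda_0^2}$, correspond to the conjugate point, and check that the branch of $\sqrt{z^2-1}$ matches $\lambda^2$ consistently. I would resolve this by testing the special case $u_0=0$, where the result must reduce to \eqref{eq:residual_at_zero_limit}. Uniform convergence on compacta transfers directly because $u(z)$ is a homeomorphism.
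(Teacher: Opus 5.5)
Your proposal is correct and follows the paper's proof essentially step for step. The paper also works with $v(z)=\sqrt{(z-1)/(z+1)}=i\lambda(u(z))^2$, which is your coordinate $\lambda^2$ up to the factor $i$. It then uses the fact that $\Phi$ becomes a positive dilation in this coordinate to place $x_0$ at $\lambda^2=|\lambda_0|^2$ and $\overline z_{u_0}$ at $\overline{\lambda_0^2}$, and rewrites $s(z,z_{u_0})$ and the ratio of half-plane Riemann maps exactly as you plan; your orientation hedges resolve cleanly because in fact $\lambda(u(z))^4=(1-z)/(1+z)$ with no extra constant, so $z\mapsto\overline z$ corresponds to $\lambda^2\mapsto\overline{\lambda^2}$ (the alternative $-\overline{\lambda_0^2}$ lies outside $\C_+$).
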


\begin{proof}
The result follows by transferring \eqref{eq:asymptotics_z_domain} into the domain $\mathsf{S}_{\pi/4}$ of the variable $\lambda$. To this end, consider the Möbius transformation $g:\overline{\mathbb{C}} \to \overline{\mathbb{C}}$ defined by
\[
   g(z) = \frac{z-1}{z+1}, \quad \mbox{with inverse } \;
   g^{-1}(\xi) = \frac{1+\xi}{1-\xi}.
\]
Note that $g$ maps the domain $\Omega_0$ onto $\overline{\mathbb{C}} \setminus [0,\infty)$.
Additionally, let $\Phi$ be the automorphism of $\Omega_0$ defined in \eqref{eq:Phi_def}. Since $\Phi$ preserves the real line and $\Phi(\pm 1) = \pm 1$, the composition $g \circ \Phi \circ g^{-1}$ is a Möbius transformation that preserves the ray $[0,\infty)$ and fixes both $0$ and $\infty$. Consequently, it must be a scaling $\xi \mapsto r \xi$ for some $r>0$. Observing that $g(i\mathbb{R}) = \mathbb{T}$, we conclude that $(g \circ \Phi)(i\mathbb{R}) = r\mathbb{T}$. Since the parameters $z_{u_0}$, $x_0$, and $\overline{z}_{u_0}$ all lie on $\Phi(i\mathbb{R})$, their images under $g$ lie on the circle of radius $r$.

We define the function $v(z)$ by choosing a branch of $\sqrt{g(z)}$ that maps $\Omega_0$ conformally onto the upper half-plane. Consistent with our earlier choice of branch cuts along $\mathbb{R} \setminus (-1,1)$, we have the identities $\sqrt{\overline{z}-1} = -\overline{\sqrt{z-1}}$ and $\sqrt{\overline{z}+1} = \overline{\sqrt{z+1}}$. This implies the symmetry relation
\[
   v(\overline{z}) = -\overline{v(z)}.
\]
Setting $v_0 = v(z_{u_0})$, we thus have $v(\overline{z}_{u_0}) = -\overline{v}_0$. Furthermore, since $|g(x_0)| = |g(z_{u_0})|$ and $g(x_0)<0$, it follows that $v(x_0) = i|v_0|$. The mapping properties of $v$ are illustrated in Figure \ref{fig:v_map}.

\begin{figure}[h]
\begin{center}
\begin{tikzpicture}[scale=1.7]

\begin{scope}
    \draw[->] (-2,0) -- (2,0) node[right] {$\text{Re}(z)$};
    \draw[->] (0,-1.5) -- (0,1.5) node[above] {$\text{Im}(z)$};

	\draw[dashed, very thick] (0.8,1) arc[start angle=150, end angle=210, radius=2];

	\draw[thick,fill=black] (0.5320508076,0) circle(0.05)node[above left] {$x_0$};
	\draw[thick,fill=black] (0.8,1) circle(0.05)node[above right] {$z_{u_0}$};
    \draw[thick,fill=black] (0.8,-1) circle(0.05)node[below right] {$\overline{z}_{u_0}$};
    
    \draw[thick] (-2,0) -- (-1,0);
    \draw[thick] (1,0) -- (2,0);
	
    \draw[thick,fill=white] (-1,0) circle(0.05);
    \draw[thick,fill=white] (1,0) circle(0.05);

    \node[below] at (-1,-0.1) {$-1$};
    \node[below] at (1,-0.1) {$1$};
    \node at (1.5,1.7) {$\Omega_0$};
    
\end{scope}

\draw[->, thick] (2.2,0.5) to[out=30, in=150] (3.8,0.5);

\node[above] at (3.0,0.8) {$v(z)$};

\begin{scope}[xshift=6cm]
    
    \fill[gray!15] (-1.9,-1) -- (-1.9,1.4) -- (1.9,1.4) -- (1.9,-1) -- cycle;
    \draw[->] (-2,-1) -- (2,-1) node[right] {$\text{Re}(v)$};
    \draw[->] (0,-1.5) -- (0,1.5) node[above] {$\text{Im}(v)$};

	\draw[dashed, very thick] (1.5*0.866,-1.5*0.5+0.5) arc[start angle=30, end angle=150, radius=1.5];
	\draw[dashed, ] (1.5,-1) arc[start angle=0, end angle=30, radius=1.5];
	\draw[dashed, ] (-1.5,-1) arc[start angle=180, end angle=150, radius=1.5];

	\draw[thick,fill=black] (0,0.5) circle(0.05)node[above left] {$i|v_0|$};
	\draw[thick,fill=black] (1.5*0.866,-1.5*0.5+0.5) circle(0.05)node[above right] {$v_0$};
    \draw[thick,fill=black] (-1.5*0.866,-1.5*0.5+0.5) circle(0.05)node[above left] {$-\overline{v}_0$};
      
     \draw[thick,fill=white] (0,-1) circle(0.05);
	 \node[below] at (0.2,-1.1) {$0$};

\end{scope}
\end{tikzpicture}
\end{center}

\caption{The conformal map $v(z) = \sqrt{(z-1)/(z+1)}$ transforms the slit domain $\Omega_0$ (left) onto the upper half-plane (right). The points $z_{u_0}$, $\overline{z}_{u_0}$, and $x_0$ are mapped to $v_0$, $-\overline{v}_0$, and $i|v_0|$ respectively.}
\label{fig:v_map}
\end{figure}
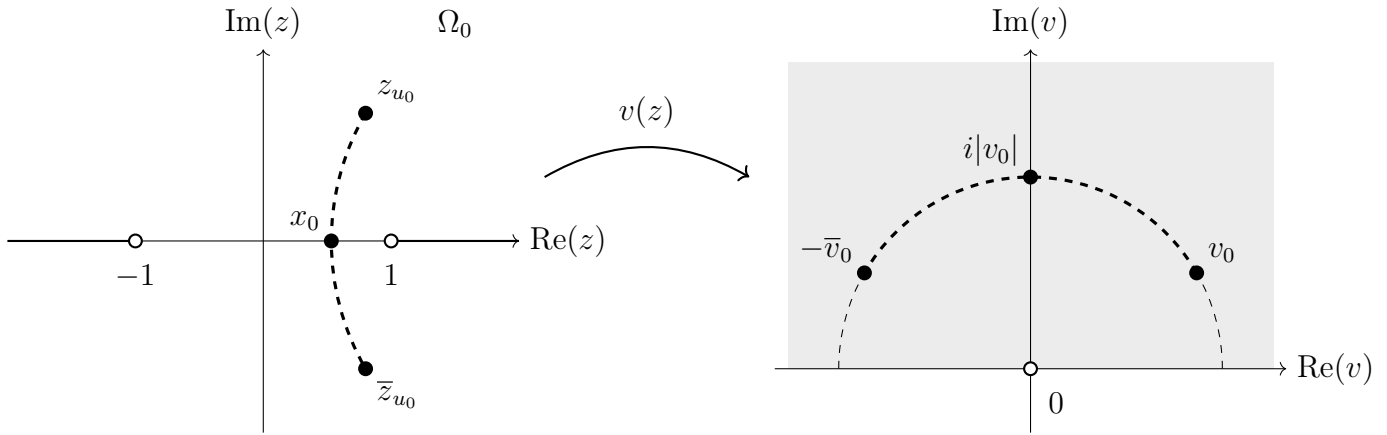

Recall the mapping $\lambda(u)$ defined in \eqref{eq:lambda}. With $u(z)$ from \eqref{eq:mobius_transformation}, a direct computation shows that
\begin{equation}
   i\lambda\bigl( u(z) \bigr)^2 = v(z).
\end{equation}
With $\lambda = \lambda(u(z))$ and $\lambda_0 = \lambda(u_0)$, this identity implies $v(z) = i\lambda^2$ and $v_0 = i\lambda_0^2$.
We now rewrite the terms in the asymptotic formula \eqref{eq:asymptotics_z_domain} using these variables. Using the relation $z = g^{-1}(v^2)$ and recalling the definition of the $h$ in \eqref{h}, the term $s(z, z_{u_0})$ transforms as follows:
\begin{equation}
   s(z, z_{u_0}) = \frac{(v_0^2+|v_0|^2) v(z)}{v_0 (v(z)^2+|v_0|^2)} 
   = \frac{\lambda^2 (\lambda_0^2 - |\lambda_0|^2)(\lambda_0^2 + |\lambda_0|^2)}{\lambda_0^2 (\lambda^2 - |\lambda_0|^2)(\lambda^2 + |\lambda_0|^2)}
   = \frac{h(\lambda, \lambda_0)}{h(\lambda_0, \lambda_0)}.
\end{equation}
Similarly, the ratio of Blaschke factors becomes
\begin{equation}
	\frac{b_{\Omega_0}(z,x_0)}{b_{\Omega_0}(z,\overline{z}_{u_0})} 
    = i \frac{v(z)-i|v_0|}{v(z)+i|v_0|} \cdot \frac{|v_0|}{v_0}\frac{v(z)+v_0}{v(z)+\overline{v}_0} 
    = \frac{|\lambda_0|^2 (\lambda^2-|\lambda_0|^2) (\lambda^2+\lambda_0^2)}{\lambda_0^2 (\lambda^2+|\lambda_0|^2) (\lambda^2-\overbar{\lambda_0^2})}.
\end{equation}
Substituting these identities into \eqref{eq:asymptotics_z_domain} yields the formula \eqref{eq:local_uniform_asymptotics}, completing the proof.

\end{proof}

A central quantity of interest is the asymptotic extremal value at a given point $u_0$. To characterize this behavior, we set $\lambda = \lambda(u)$ and $\lambda_0 = \lambda(u_0)$, and introduce the kernel function $k_{\Omega_\alpha}$ defined by
\begin{equation}
	k_{\Omega_\alpha}(u,u_0) := k_{\C_+}\bigl(\lambda,\lambda_0\bigr)=\frac{2\lambda \overline{\lambda}_0 }{(\lambda+\overline{\lambda}_0)^2}.
\end{equation}
Recall that for the unweighted case ($w \equiv 1$), \cite{eichinger17} established that
\[\lim_{n\rightarrow \infty}e^{ng_{\Omega_\alpha}(u_0,\infty)}\|T_n^{\Gamma_\alpha}(\cdot,u_0)\| = \frac{1}{k_{\Omega_\alpha}(u_0,u_0)}.\]
We now extend this result to weighted residual polynomials with rational weights, paving the way for the broader class of admissible weight functions considered in the next section.

\begin{theorem}
	\label{thm:weighted_residual_asymptotics_at_point}
	Let $w$ be the weight function defined in \eqref{eq:weight_function_form}. Then for any point $u_0\in \Omega_\alpha$, we have 	
	\begin{equation}
		\lim_{n\rightarrow \infty}e^{ng_{\Omega_\alpha}(u_0,\infty)}\|wT_n^{\Gamma_{\alpha},w}(\cdot ,u_0)\|_{\Gamma_\alpha} = k_{\Omega_\alpha}(u_0,u_0)^{-1}\exp\left(\int_{\Gamma_\alpha}\log w(x)\,\omega(u_0,dx,\Omega_{\alpha})\right).
		\label{eq:residual_max_asymptotics}
	\end{equation}
\end{theorem}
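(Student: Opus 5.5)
The plan is to evaluate the locally uniform limit of Theorem \ref{thm:weighted_residual_asymptotics} on the diagonal $u=u_0$. By \eqref{eq:dual_relation}, $\|wT_n^{\Gamma_\alpha,w}(\cdot,u_0)\|_{\Gamma_\alpha}=1/R_n^{\Gamma_\alpha,w}(u_0,u_0)$. With the normalization \eqref{normal} we have $b_{\Omega_\alpha}(u_0,\infty)^n=e^{-ng_{\Omega_\alpha}(u_0,\infty)}>0$. Hence
\[
e^{ng_{\Omega_\alpha}(u_0,\infty)}\|wT_n^{\Gamma_\alpha,w}(\cdot,u_0)\|_{\Gamma_\alpha}=\bigl(b_{\Omega_\alpha}(u_0,\infty)^nR_n^{\Gamma_\alpha,w}(u_0,u_0)\bigr)^{-1},
\]
and it suffices to compute the limit of the bracket and check that it is a positive real number.

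First take $u_0\in\D$ and set $u=u_0$ in \eqref{eq:local_uniform_asymptotics}; uniform convergence on a compact neighbourhood of $u_0$ certainly gives pointwise convergence there. The factor $\tfrac12(1+h/h)$ becomes $1$. Writing $\lambda_0=\lambda(u_0)$, the rational factor becomes
\[
\frac{|\lambda_0|^2(\lambda_0^2-|\lambda_0|^2)\,2\lambda_0^2}{\lambda_0^2(\lambda_0^2+|\lambda_0|^2)(\lambda_0^2-\overline{\lambda}_0^{\,2})}
=\frac{2|\lambda_0|^2(\lambda_0-\overline\lambda_0)}{\lambda_0(\lambda_0+\overline\lambda_0)(\lambda_0-\overline\lambda_0)}
=\frac{2\lambda_0\overline\lambda_0}{(\lambda_0+\overline\lambda_0)^2}\cdot\frac{\lambda_0+\overline\lambda_0}{\lambda_0},
\]
using $\lambda_0^2-|\lambda_0|^2=\lambda_0(\lambda_0-\overline\lambda_0)$ and $\lambda_0^2+|\lambda_0|^2=\lambda_0(\lambda_0+\overline\lambda_0)$. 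This step needs care: the result should equal $k_{\C_+}(\lambda_0,\lambda_0)$ up to the normalization constant ``$\lambda_0^2$'' in the denominator of \eqref{eq:local_uniform_asymptotics}. That symbol is presumably meant to fix positivity at $u_0$, and I would recheck it against the Blaschke ratio $b_{\Omega_0}(z,x_0)/b_{\Omega_0}(z,\overline z_{u_0})$ at $z=z_{u_0}$ in Proposition \ref{prop:main_propsition}.

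The cleaner route is to work directly in the $v$-variable from the proof of Theorem \ref{thm:weighted_residual_asymptotics}. Both Blaschke factors there are normalized to be positive at $z_{u_0}$, so their ratio at $z_{u_0}$ is $|b_{\Omega_0}(z_{u_0},x_0)|/|b_{\Omega_0}(z_{u_0},\overline z_{u_0})|$. In the upper half-plane picture, with $v_0=v(z_{u_0})$, we have
\[
|b(z_{u_0},x_0)|=\left|\frac{v_0-i|v_0|}{v_0+i|v_0|}\right|,\qquad |b(z_{u_0},\overline z_{u_0})|=\left|\frac{v_0+\overline v_0}{v_0-\overline{v}_0}\right|
\]
(the latter since $v(\overline z_{u_0})=-\overline v_0$ and $\overline{-\overline v_0}=-v_0$). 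Substituting $v_0=i\lambda_0^2$ and simplifying should give exactly $2\lambda_0\overline\lambda_0/(\lambda_0+\overline\lambda_0)^2$, where $|\lambda_0+\overline\lambda_0|^2$ becomes $(\lambda_0+\overline\lambda_0)^2$ because that quantity is real and positive in $\mathsf S_{\pi/4}$. This is routine algebra, and I expect it to be the only delicate point, namely matching moduli and phases correctly. Finally, $F_w(u_0,u_0)=\exp(h(u_0))=\exp\bigl(-\int\log|w|\,d\omega(u_0,\cdot)\bigr)$, which is positive by \eqref{hw}--\eqref{eq:outer_function}. Inverting the bracket then yields \eqref{eq:residual_max_asymptotics}.

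For $|u_0|>1$ I would use the reflection formula \eqref{eq:reflection_formula} at $u=u_0$. This gives $|b_{\Omega_\alpha}(u_0,\infty)^nR_n(u_0,u_0)|=|b_{\Omega_\alpha}(u_0^*,\infty)^nR_n(u_0^*,u_0^*)|$, and both sides are positive. It then remains to check that each quantity on the right of \eqref{eq:residual_max_asymptotics} is invariant under $u_0\mapsto u_0^*$. The harmonic measure is invariant because the reflection fixes $\Gamma_\alpha$ pointwise, and $k_{\Omega_\alpha}(u_0,u_0)$ is a conformal invariant of $\Omega_\alpha$ under the anticonformal symmetry (indeed $\lambda(u^*)=\overline{\lambda(u)}$ up to the branch). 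For $u_0$ on $\T\setminus\Gamma_\alpha$ I would argue that the identity \eqref{eq:reflection_formula} with $u_0=u_0^*$ is not needed. Instead, \eqref{eq:asymptotics_z_domain} holds for every $u_0\in\Omega_\alpha$ via the same Möbius reduction, since one can pick any automorphism of the disk-complement setting sending $u_0$ to $0$. Alternatively, one can apply the preceding $u_0\in\D$ computation after a Möbius map of $\Omega_\alpha$ to some $\Omega_{\tilde\alpha}$ carrying $u_0$ inside $\D$; both sides of \eqref{eq:residual_max_asymptotics} transform compatibly, since the weight class \eqref{eq:weight_function_form} is preserved up to the factor $W_n$-type correction already handled in Lemma \ref{lem:identification}. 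The case $u_0=\infty$ is the Corollary above.
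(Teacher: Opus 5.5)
Your cases $u_0\in\D$ and $|u_0|>1$ (including $u_0=\infty$) follow the paper's route. They evaluate Theorem~\ref{thm:weighted_residual_asymptotics} on the diagonal, use \eqref{eq:dual_relation}, and reflect with \eqref{eq:reflection_formula} together with $\lambda(u^*)=\overline{\lambda(u)}$.

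\textbf{The genuine gap: $u_0\in\T\setminus\Gamma_\alpha$.} Neither of your suggestions works there.
\begin{itemize}
\item A M\"obius map taking $\Gamma_\alpha$ onto an arc of $\T$ must take the circle through $\Gamma_\alpha$, namely $\T$, onto $\T$. So it can never carry a point of $\T$ into $\D$.
\item The reduction in Proposition~\ref{prop:main_propsition} is built on the disk automorphism $T_{u_0}$, which requires $|u_0|<1$.
\item Conformal self-maps of $\Omega_\alpha$ that move $u_0$ off $\T$ do exist, but they are not M\"obius and do not preserve $\cP_n$.
\item Under $u^{-1}$ such a $u_0$ lands in the gap $(-1,1)$ of $\sfA_0$. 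There the representation \eqref{eq:extremal_solution}, which is set up for a non-real evaluation point with $z_\infty=\overline{z}_0$, is not available.
\end{itemize}
The missing ingredient is continuity in $u_0$, which the paper supplies through Lemma~2.7 of \cite{eichinger17} adapted to weights. Concretely, $F_n(u_0):=e^{-ng_{\Omega_\alpha}(u_0,\infty)}R_n^{\Gamma_\alpha,w}(u_0,u_0)$ is the supremum of $|b_{\Omega_\alpha}(u_0,\infty)^nP(u_0)|$ over $P\in\cP_n$ with $\|wP\|_{\Gamma_\alpha}\le 1$. By the maximum principle, each $b_{\Omega_\alpha}(\cdot,\infty)^nP$ is analytic in $\Omega_\alpha$ and bounded by $1/\min_{\Gamma_\alpha}|w|$, since $|b_{\Omega_\alpha}|=1$ on $\Gamma_\alpha$. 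Hence $\{F_n\}$ is equicontinuous on compact subsets of $\Omega_\alpha$. Convergence on the dense set $\Omega_\alpha\setminus\T$ to the continuous function $k_{\Omega_\alpha}(u_0,u_0)F_w(u_0,u_0)$ therefore extends to every point of $\T\setminus\Gamma_\alpha$.

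\textbf{The case $u_0\in\D$: algebra slips.} Your suspicion about the normalization in \eqref{eq:local_uniform_asymptotics} comes from an algebra slip; the formula is fine. The correct factorizations are
\begin{itemize}
\item $\lambda_0^2-|\lambda_0|^2=\lambda_0(\lambda_0-\overline{\lambda}_0)$,
\item $\lambda_0^2+|\lambda_0|^2=\lambda_0(\lambda_0+\overline{\lambda}_0)$,
\item $\lambda_0^2-\overline{\lambda}_0^{\,2}=(\lambda_0-\overline{\lambda}_0)(\lambda_0+\overline{\lambda}_0)$.
\end{itemize}
With these, the rational factor at $u=u_0$ is exactly $2|\lambda_0|^2/(\lambda_0+\overline{\lambda}_0)^2=k_{\C_+}(\lambda_0,\lambda_0)$. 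You dropped a factor $\lambda_0$ in the numerator and a factor $\lambda_0+\overline{\lambda}_0$ in the denominator.

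Your modulus route also has an error. The upper half-plane Blaschke factor vanishing at $-\overline{v}_0$ is $(v+\overline{v}_0)/(v+v_0)$. So $|b_{\Omega_0}(z_{u_0},\overline{z}_{u_0})|=|v_0+\overline{v}_0|/(2|v_0|)$, not $|v_0+\overline{v}_0|/|v_0-\overline{v}_0|$. With your denominator the computation would not yield $k_{\C_+}(\lambda_0,\lambda_0)$.
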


\begin{proof}
First, assume that $u_0\in \D$. By the normalization in \eqref{normal}, we have $b_{\Omega_\alpha}(u_0,\infty) = e^{-g_{\Omega_\alpha}(u_0,\infty)}$. Hence, it follows from \eqref{eq:local_uniform_asymptotics} that as $n\rightarrow \infty$,
	\[
		e^{-n g_{\Omega_\alpha}(u_0,\infty)} R_n^{\Gamma_{\alpha},w}(u_0,u_0) \longrightarrow \frac{2|\lambda_0|^2}{(\lambda_0+\overline{\lambda}_0)^2} F_w(u_0,u_0) = k_{\Omega_\alpha}(u_0,u_0)F_w(u_0,u_0).
    \]
	Recalling that
	\[F_w(u_0,u_0) = \exp\left(-\int_{\Gamma_\alpha} \log w(x)\,\omega(u_0,dx,\Omega_{\alpha})\right),\]
	we obtain \eqref{eq:residual_max_asymptotics} as a direct consequence of the duality relation \eqref{eq:dual_relation}.

The corresponding result for the exterior $\{u: |u|>1\}$ follows from symmetry considerations. Let $u^\ast = 1/\overline{u}$. For $u_0\in \D$, the reflection formula \eqref{eq:reflection_formula} implies that
	\[
	e^{-ng_{\Omega_\alpha}(u_0^\ast,\infty)}R_{n}^{\Gamma_\alpha,w}(u_0^\ast,u_0^\ast) 
	= e^{-ng_{\Omega_\alpha}(u_0,\infty)}{R_{n}^{\Gamma_\alpha,w}(u_0,u_0)} 
	\longrightarrow k_{\Omega_\alpha}(u_0,u_0){F_w(u_0,u_0)}
	\]
	as $n\rightarrow \infty$. Since $\lambda(u^\ast) = \overline{\lambda(u)}$, we have $k_{\Omega_\alpha}(u_0^\ast,u_0^\ast) = k_{\Omega_\alpha}(u_0,u_0)$. Furthermore, since the weight $w$ is real-valued on $\Gamma_\alpha$, the Schwarz reflection principle implies that $F_w(u^\ast,u^\ast) = {F_w(u,u)}$. Thus, the limit holds for any $u_0^\ast$ in the exterior of the unit disk.

Finally, for $u_0\in \T\setminus \Gamma_\alpha$, we apply \cite[Lemma 2.7]{eichinger17}, suitably adjusted to the weighted setting. This ensures that the function
    \[
       u_0\longmapsto \lim_{n\rightarrow \infty} \Bigl( e^{ng_{\Omega_\alpha}(u_0,\infty)} \|wT_n^{\Gamma_\alpha, w}(\cdot,u_0)\|_{\Gamma_\alpha} \Bigr)
    \]
    is continuous on $\Omega_\alpha$, which completes the proof.	

\end{proof}
\begin{remark}
This result can naturally be interpreted through the theory of Hardy spaces. To describe how this is done, we let \(\Gamma\) denote a Jordan arc of class \(C^{1+\alpha}\) with complement \(\Omega\). If \(\psi\) denotes the conformal map from the exterior of the unit disk to \(\Omega\) mapping \(\infty\) to \(\infty\) then we let \(H^2(\Omega)\) consist of those functions \(F\) such that
\[F\circ \psi\in H^2(\bbD).\]
Such an \(F\) has non-tangential boundary values from either side (a.e. with respect to arc-length) on \(\Gamma\). We label these two boundary values by \(F_+\) and \(F_-\). Given a point \(z_0\in \Omega\), a norm on \(H^2(\Omega)\) is given by
\[\|F\|_{\omega(z_0)}^2= \int_{\Gamma}\bigl(|F_+(x)|^2+|F_-(x)|^2\bigr) \omega(z_0,dx,\Omega).\]
With this norm \(H^2(\Omega)\) becomes a reproducing kernel Hilbert space whose associated kernel is denoted by \(k_{\omega(z_0)}(z,\zeta)\). Classical Hilbert space theory shows that
\[\frac{1}{k_{\omega(z_0)}(z_0,z_0)} = \inf\left\{\int_{\Gamma}(|F_+(x)|^2+|F_-(x)|^2)\omega(z_0,dx,\Omega): F\in H^2(\Omega),\, F(z_0) = 1\right\}.\]
This quantity appears in relation to weighted residual polynomials through \cite[Conjecture 1.7]{buchecker-eichinger-zinchenko25} which hypothezises that if \(\Gamma\) is of class \(C^{1+\alpha}\) and \(w\) is an upper-semicontinuous weight function then
\[\lim_{n\rightarrow \infty}e^{ng_{\Omega}(z_0,\infty)}\|wT_n^{\Gamma,w}(\cdot,z_0)\|_{\Gamma} = \frac{1}{k_{\omega(z_0)}(z_0,z_0)}\exp\left(\int_{\Gamma}\log w(x)\omega(z_0,dx,\Omega_\alpha)\right).\]
In our current setting, where we investigate circular arcs, \cite[Theorem 5.4.10]{buchecker25} establishes the relation 
\[k_{\omega(u_0)}(u_0,u_0) = k_{\Omega_\alpha}(u_0,u_0).\]
Hence, Theorem \ref{thm:weighted_residual_asymptotics_at_point} proves the conjecture in the case of circular arcs and rational weights. Our aim is now to lift this result to a larger class of weight functions.
\end{remark}

\section{General Weight functions}
\label{sec:general_weight}
We now relax the assumptions on the weight function to extend formulas such as \eqref{eq:chebyshev_norm_asymptotics} to a broader class of weights. For clarity of presentation, we fix $u_0\in \Omega_\alpha=\overline{\C}\setminus \Gamma_\alpha$ and shift our focus from \(R_n^{\Gamma_\alpha,w}(\cdot,u_0)\) to \(T_n^{\Gamma_\alpha,w}(\cdot,u_0)=:T_n^{w}(\cdot,u_0)\). It should be stressed that, by \eqref{eq:chebyshev_residual_relation}, these are essentially the same object. 

To begin, let $w:\Gamma_\alpha\rightarrow (0,\infty)$ be a positive continuous function on $\Gamma_\alpha$. Suppose there exist polynomials $Q_1$ and $Q_2$, both nonvanishing on $\T$, such that
\begin{equation}
	\frac{1}{|Q_1(u)|}\leq w(u)\leq \frac{1}{|Q_2(u)|}
	\label{eq:polynomial_weight_inequalities}
\end{equation}
for all $u\in \Gamma_\alpha$. Then it is easily seen that
\begin{align} \notag
	\|T_n^{1/Q_1}(\cdot,u_0)/Q_1\|_{\Gamma_\alpha}&\leq \|T_n^{w}(\cdot,u_0)/Q_1\|_{\Gamma_\alpha}\leq \|w T_n^{w}(\cdot,u_0)\|_{\Gamma_\alpha}\\&\leq\|T_n^{1/Q_2}(\cdot,u_0)w\|_{\Gamma_\alpha}\leq \|T_n^{1/Q_2}(\cdot,u_0)/Q_2\|_{\Gamma_\alpha}.	
\end{align}
Without loss of generality, we may assume that the polynomials $Q_1$ and $Q_2$ have all their zeros in the set $\{u: |u|> 1\}$. To see that this assumption is non-restrictive, just observe that
\[
   |u-u_j| = |1-u\overline{u}_j| 
\]
for $u\in \T$. Consequently, if $P$ is a polynomial with a zero of order $n$ at $u_0\in \D$, the polynomial 
\[
   \frac{P(u)(1-u\overline{u}_0)^n}{(u-u_0)^n}
\] 
has the same absolute value on $\T$ but now has a zero at $1/\overline{u}_0$, which lies in the set $\{u: |u|>1\}$. 

Now, let $\epsilon>0$ be given. From Theorem \ref{thm:weighted_residual_asymptotics_at_point}, we have
\[\|T_n^{1/Q_k}(\cdot,u_0)/Q_k\|_{\Gamma_\alpha}\sim e^{-ng_{\Omega_\alpha}(u_0,\infty)}k_{\Omega_\alpha}(u_0,u_0)^{-1}\exp\left(-\int_{\Gamma_\alpha}\log Q_k(x)\,\omega(u_0,dx,\Omega_{\alpha})\right)\]
for $k\in \{1,2\}$.
By taking $n$ large enough, we can ensure that
\begin{multline*}
\qquad k_{\Omega_\alpha}(u_0,u_0)^{-1} \exp\left(-\int_{\Gamma_\alpha}\log Q_1(x)\,
\omega(u_0,dx,\Omega_{\alpha})\right)-\epsilon\leq 
\|w T_n^{w}(\cdot,u_0)\|_{\Gamma_\alpha}e^{ng_{\Omega_\alpha}(u_0,\infty)} \\
\leq k_{\Omega_\alpha}(u_0,u_0)^{-1}\exp\left(-\int_{\Gamma_\alpha}\log Q_2(x)\,\omega(u_0,dx,\Omega_{\alpha})\right)+\epsilon. \qquad	
\end{multline*}
Since $\epsilon>0$ was arbitrary, it follows that
\begin{multline}
k_{\Omega_\alpha}(u_0,u_0)^{-1}\exp\left(-\int_{\Gamma_\alpha}\log Q_1(x)\,
\omega(u_0,dx,\Omega_{\alpha})\right)
\leq \liminf_{n\rightarrow \infty} 
\|w T_n^{w}(\cdot,u_0)\|_{\Gamma_\alpha}e^{ng_{\Omega_\alpha}(u_0,\infty)}\label{eq:polynomial_approximation_above_below} \\
\leq \limsup_{n\rightarrow \infty} 
\|wT_n^{w}(\cdot,u_0)\|_{\Gamma_\alpha}e^{ng_{\Omega_\alpha}(u_0,\infty)} 
\leq k_{\Omega_\alpha}(u_0,u_0)^{-1}\exp\left(-\int_{\Gamma_\alpha}\log Q_2(x)\,
\omega(u_0,dx,\Omega_{\alpha})\right). 
\end{multline}
This holds for any choice of polynomials $Q_1$ and $Q_2$ satisfying \eqref{eq:polynomial_weight_inequalities}. Our next step is to construct such polynomials so that the inequalities in \eqref{eq:polynomial_weight_inequalities} become approximate equalities.

Let $\epsilon>0$ be chosen such that $0<2\epsilon<\min_{u\in \Gamma_\alpha} w(u)$. By the Stone--Weierstra{\ss} theorem, we can approximate the function $1/w$ using polynomials in $u$ and $\overline{u}$. 
Specifically, there exist polynomials $P_1$ and $P_2$ satisfying
\begin{equation}
	w(u)-2\epsilon < 1/|P_{1}(u,\overline{u})|< w(u)-\epsilon\leq w(u)+\epsilon < 1/|P_2(u,\overline{u})|< w(u)+2\epsilon
	\label{eq:stone_weierstrass_generated_inequality}
\end{equation}
for all $u\in \Gamma_\alpha$. 
Since $\overline{u} = u^{-1}$ for $u\in \T$, the polynomials $P_k$ (for $k=1, 2$) can be rewritten as 
\[P_k(u,\overline{u}) = \sum_{l,j=1}^{m}a_{l,j}^{(k)}u^l\overline{u}^j = \sum_{l,j = 1}^{m}a_{l,j}^{(k)}u^lu^{-j} = u^{-m}\sum_{l,j=1}^{m}a_{l,j}^{(k)}u^{l}u^{m-j}\]
for $u\in \T$.
Thus, defining
\[
	Q_{k}(u) := \sum_{l,j=1}^{m}a_{l,j}^{(k)}u^{l}u^{m-j},
\]
we obtain polynomials $Q_k$ whose absolute values on $\T$ coincide with those of $P_k$. 
Moreover, we may assume that $Q_k$ is zero-free on $\T$. For if a zero appears, it can be perturbed by slightly dilating it with a factor $1+\delta$ (for some $\delta>0$), which continuously changes the modulus of the polynomial. If this perturbation is sufficiently small, the inequalities in \eqref{eq:stone_weierstrass_generated_inequality} remain valid, ensuring that
\begin{equation}
	w(u)-2\epsilon < 1/|Q_1(u)|< w(u)-\epsilon\leq w(u)+\epsilon < 1/|Q_2(u)|< w(u)+2\epsilon.
\end{equation}
Since this holds for all sufficiently small $\epsilon$, we can make the two integral in \eqref{eq:polynomial_approximation_above_below} arbitrarily close. 
This leads to the asymptotic formula
\begin{equation}
	\|w T_n^{w}(\cdot,u_0)\|_{\Gamma_\alpha}\sim  e^{-ng_{\Omega_\alpha}(u_0,\infty)}k_{\Omega_\alpha}(u_0,u_0)^{-1}\exp\left(\int_{\Gamma_\alpha} \log w(x)\,\omega(u_0,dx,\Omega_{\alpha})\right).
	\label{eq:limit_chebyshev_norm}
\end{equation}

By employing a similar approach, now approximating step functions from above and below, we can show that \eqref{eq:limit_chebyshev_norm} holds for any Riemann integrable weight function $w$ on $\Gamma_\alpha$ that satisfies the additional condition that there exists a constant $M\geq 1$ such that
\begin{equation}
	1/M\leq w(u)\leq M
	\label{eq:weight_bounds}
\end{equation}
for $u\in \Gamma_\alpha$.
Since the necessary arguments closely follow those in \cite[Chapter I.7]{bernstein30} (see also \cite[Appendix A.2]{christiansen-eichinger-rubin23} for a summary), we omit them here.

\subsection{Allowing for zeros}
The most general form of our asymptotic formula in the context of circular arcs is given as follows.

\begin{theoremm}
	Let $w:\Gamma_\alpha\rightarrow [0,\infty)$ be a weight function of the form
	\begin{equation}
	w(u) = w_0(u)\prod_{j=1}^{k}|u-u_j|^{s_j},
	\end{equation}
	where $w_0$ is a Riemann integrable function satisfying \eqref{eq:weight_bounds}, and where $u_j\in \Gamma_\alpha$ and $s_j\in \R$. Then, for any $u_0\in \Omega_\alpha$,
	\begin{equation}
		\|w T_n^{w}(\cdot,u_0)\|_{\Gamma_\alpha}\sim  e^{-ng_{\Omega_\alpha}(u_0,\infty)}k_{\Omega_\alpha}(u_0,u_0)^{-1}\exp\left(\int_{\Gamma_\alpha} \log w(x)\,\omega(u_0,dx,\Omega_{\alpha})\right).
	\end{equation}
	In particular,
	\begin{equation}
		\lim_{n\rightarrow \infty}\cW_n(\Gamma_\alpha,w)= 2\cos(\alpha/4)^2\exp\left(\int_{\Gamma_\alpha} \log w(x)\,d\mu_{\Gamma_\alpha}(x)\right).
		\label{eq:chebyshev_norm_general_formula}
	\end{equation}
	\label{thm:main_norm_asymptotics}
\end{theoremm}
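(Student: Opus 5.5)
We will reduce the general statement to the already established case \eqref{eq:limit_chebyshev_norm} for weights satisfying \eqref{eq:weight_bounds}. The zeros and singularities $|u-u_j|^{s_j}$ with $u_j\in\Gamma_\alpha$ are handled by a two-sided comparison in which the singular factor is either excised or absorbed into the degree. Throughout, write $\Phi_{u_0}(w):=k_{\Omega_\alpha}(u_0,u_0)^{-1}\exp\bigl(\int_{\Gamma_\alpha}\log w\,d\omega(u_0,\cdot,\Omega_\alpha)\bigr)$ for the expected constant. By multiplicativity of both sides in $w$, and since the integral of $\log|u-u_j|$ against harmonic measure is finite, we may treat one factor $|u-u_1|^{s}$ at a time. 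Induction on $k$ is not literal, because the bounded factor changes, but the argument below localizes near each $u_j$ separately, so all factors can be handled at once.

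\textbf{Upper bound.} Fix $\delta>0$. For a nonnegative integer $m\ge s_j^+$, compare with the polynomial factor $|u-u_j|^{m}$. Our plan is to write $|u-u_j|^{s}=|u-u_j|^{m}\cdot|u-u_j|^{s-m}$ and to truncate the second factor by $\tilde w_\delta:=\min(|u-u_j|^{s-m},\delta^{s-m})$, which is bounded above and below. This gives $w\le w_0\,|u-u_j|^{m}\tilde w_\delta$ only when $s-m\ge 0$, so the factor must be organized the other way. We therefore set $v_\delta:=w_0\max(|u-u_j|^{s},\delta^{s})$ when $s<0$ (respectively $\min$ when $s>0$, giving $w\le v_\delta$ directly), which is bounded above and below and Riemann integrable.

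For $s>0$ we have $w\le v_\delta$, hence $\|wT_n^w\|\le\|wT_n^{v_\delta}\|\le\|v_\delta T_n^{v_\delta}\|$, and \eqref{eq:limit_chebyshev_norm} together with $\int\log v_\delta\to\int\log w$ (monotone convergence, as $\log|u-u_j|\in L^1(\omega)$) gives the $\limsup$ bound. For $s<0$ we instead use a polynomial multiplier. For integers $m\ge -s$, the function $|u-u_j|^{m}w$ is bounded, and $P(u)(u-u_j)^m$ is admissible in the degree $n+m$ problem. This yields $\|wT_n^w(\cdot,u_0)\|$ bounded by the degree-$n+m$ norm divided by $|u_0-u_j|^m$ and a shift in $e^{g}$. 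This gives the wrong direction, so the plan for $s<0$ is instead to bound above by $|u-u_j|^{-m}\cdot(\text{bounded})$, with the extremal polynomial for the bounded weight multiplied by $(u-u_j)^{m}$. Concretely, take $Q=(u-u_j)^{m}T_{n-m}^{v}(\cdot,u_0)/(u_0-u_j)^m$ with $v=w|u-u_j|^{m}$ truncated near zero. Then $\|wQ\|$ is controlled by $\|vT^{v}_{n-m}\|/|u_0-u_j|^m$, and the asymptotics for $v$ together with $e^{-(n-m)g}$ combine into $\Phi_{u_0}(w)$ times a factor $e^{mg(u_0,\infty)}|u_0-u_j|^{-m}\exp(m\int\log|x-u_j|\,d\omega)$. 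By Frostman-type identities this factor equals $1$, since $\int\log|x-u_j|\,\omega(u_0,dx)=\log|u_0-u_j|-g_{\Omega_\alpha}(u_0,\infty)+g_{\Omega_\alpha}(u_0,u_j)$-type terms with $g(u_0,u_j)=0$ for $u_j\in\Gamma_\alpha$.

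\textbf{Lower bound.} This is the direction we expect to be the main obstacle. For $s>0$ the weight vanishes at $u_j$, and a near-extremal polynomial could exploit the zero. We would first invoke Szeg\H{o}-type reasoning: the lower bound follows from comparing with $v_\delta:=w_0\max(|u-u_j|^{s},\ \cdot)$, which fails because $w\le v_\delta$ points the wrong way. We therefore intend the reverse multiplier trick: $w\ge w_0|u-u_j|^{m}\cdot c$ near $u_j$ with integer $m\ge s$ locally fails to be sharp. Instead we use Bernstein's method, as in \cite[Chapter I.7]{bernstein30} and \cite[Appendix A.2]{christiansen-eichinger-rubin23}. Remove a small arc $I_\delta$ around $u_j$ and compare $\|wT_n^w\|_{\Gamma_\alpha}\ge\|wT_n^w\|_{\Gamma_\alpha\setminus I_\delta}$. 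Then apply the residual-polynomial asymptotics on the two sub-arcs, or more simply use the Bernstein--Walsh/Szeg\H{o} lower bound \eqref{eq:szego_inequality} in weighted form for the positive weight equal to $w$ off $I_\delta$ and $\eta$ on $I_\delta$, letting $\eta,\delta\to0$. The delicate point is recovering the factor $2\cos^2(\alpha/4)$ rather than $1$. For this we plan to approximate $w$ from below by $w_0\min(|u-u_j|^{s},\cdot)$ smoothed so that it is bounded below by $\eta>0$, and then control the difference $\|wT\|$ versus $\|w_\eta T\|$ via a Remez-type inequality showing that $T_n^w$ cannot be large on the tiny arc $I_\delta$ relative to its size elsewhere. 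Continuity of $\Phi_{u_0}$ as $\eta,\delta\to0$ then completes the proof, and the particular case \eqref{eq:chebyshev_norm_general_formula} follows as in the Corollary by taking $u_0=\infty$.
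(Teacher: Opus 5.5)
\textbf{There is a genuine gap: the lower bound for exponents $s_j>0$.} This is the substantive part of the theorem, and you discard exactly the argument that proves it. Absorbing the zero into the polynomial is not lossy.

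Take first $s=m\in\N$, $u_0\neq\infty$, and $P=T_n^w(\cdot,u_0)$. Then $(u-u_j)^mP(u)/(u_0-u_j)^m$ is a competitor in the degree-$(n+m)$ problem for $w_0$, so
\begin{multline*}
\|wT_n^w(\cdot,u_0)\|_{\Gamma_\alpha}\ge |u_0-u_j|^m\,\|w_0T_{n+m}^{w_0}(\cdot,u_0)\|_{\Gamma_\alpha}\\
\sim |u_0-u_j|^m\,e^{-(n+m)g_{\Omega_\alpha}(u_0,\infty)}\,k_{\Omega_\alpha}(u_0,u_0)^{-1}\exp\Bigl(\int_{\Gamma_\alpha}\log w_0(x)\,\omega(u_0,dx,\Omega_\alpha)\Bigr).
\end{multline*}
For $u_j\in\Gamma_\alpha$ one has
\[
\exp\Bigl(\int_{\Gamma_\alpha}\log|x-u_j|\,\omega(u_0,dx,\Omega_\alpha)\Bigr)=|u_0-u_j|\,e^{-g_{\Omega_\alpha}(u_0,\infty)}.
\]
You need this identity anyway for $s<0$, and no Green's function with pole on $\Gamma_\alpha$ enters it. It shows that $|u_0-u_j|^me^{-mg_{\Omega_\alpha}(u_0,\infty)}$ equals $\exp\bigl(m\int\log|x-u_j|\,\omega(u_0,dx,\Omega_\alpha)\bigr)$ exactly, so the lower bound is sharp. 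The case $u_0=\infty$ is analogous, with monic polynomials and $\Cap(\Gamma_\alpha)$.

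For non-integer $s>0$ the loss is purely local and disappears after a $\delta$-localization. Set $w_l:=w_0|u-u_j|^{\lceil s\rceil}$ on $\{|u-u_j|<\delta\}$ and $w_l:=w$ elsewhere. For $\delta<1$ you have $w_l\le w$. Moreover $w_l=|u-u_j|^{\lceil s\rceil}\tilde w_0$ with $\tilde w_0$ Riemann integrable and bounded above and below, so the integer case applies to $w_l$. This gives a $\liminf$ bound with $\int\log w_l\,d\omega$ in place of $\int\log w\,d\omega$, and $\int\log w_l\,d\omega\to\int\log w\,d\omega$ as $\delta\to0$ by monotone convergence. This is exactly the paper's route. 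The paper also uses a majorant bounded below for the upper bound, and the exact identity $T_{n+m}^w(\cdot,u_0)=(u-u_j)^m(u_0-u_j)^{-m}T_n^{w_0}(\cdot,u_0)$ when $w=w_0|u-u_j|^{-m}$.

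\textbf{Why your replacement argument does not close.} Restricting to $\Gamma_\alpha\setminus I_\delta$ replaces $\Gamma_\alpha$ by a set of smaller capacity, and the Szeg\H{o} inequality gives at best the constant $1$ instead of $2\cos^2(\alpha/4)$, as you note yourself.

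The Remez step fails as stated. A weight bounded below by $\eta>0$ cannot minorize $w$ near $u_j$. You would therefore need $w_\eta\ge w$ together with $\|w_\eta T_n^w\|\le(1+o(1))\|wT_n^w\|$, that is, $\eta|T_n^w|\lesssim\|wT_n^w\|$ near the zero. But $T_n^w$ genuinely grows polynomially in $n$ there. Already for $w(x)=|x|$ on $[-1,1]$ and $n$ even, $T_n^w(x)=2^{-n}T_{n+1}(x)/x$, so
\[
|T_n^w(0)|=(n+1)2^{-n}=(n+1)\|wT_n^w\|.
\]
Hence for any fixed $\eta$ the comparison breaks down once $n\gtrsim 1/\eta$. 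Letting $\eta=\eta_n\to0$ instead would require the asymptotics for bounded weights to hold uniformly in the bound $M$, which the approximation by rational weights does not provide.

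\textbf{The truncations in your upper-bound paragraph are also swapped.}
\begin{itemize}
\item For $s>0$ the majorant must be $w_0\max(|u-u_j|^s,\delta^s)$. With $\min$ you get a minorant that still vanishes at $u_j$.
\item For $s<0$, $w_0\max(|u-u_j|^s,\delta^s)$ is still unbounded. The bounded minorant $w_0\min(|u-u_j|^s,\delta^s)\le w$ is what gives the easy lower bound for $s<0$, which you never state.
\end{itemize}
Your upper bound for $s<0$ via $(u-u_j)^mT_{n-m}^{v_\delta}(\cdot,u_0)/(u_0-u_j)^m$ is correct.
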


\begin{proof}
	Similar to the approach in \cite{bernstein31}, we show how an additional factor $|u-u_j|^{s_j}$ can be incorporated into \eqref{eq:chebyshev_norm_general_formula}, under the assumption that \eqref{eq:chebyshev_norm_general_formula} holds for $w = w_0$. The remaining result can then be proved by induction. 
	
Let us begin by assuming that 
	\begin{equation}
	w(u) = w_0(u)|u-u_j|,
	\end{equation}
	where $w_0$ is a weight for which \eqref{eq:chebyshev_norm_general_formula} holds. 
	By incorporating the zero of the weight $w$ into the polynomial, we find that if $u_0\neq \infty$, then
	\begin{multline*}
		\qquad \frac{1}{|u_0-u_j|}\|w T_n^{w}(\cdot,u_0)\|_{\Gamma_\alpha}\geq \|w_0 T_{n+1}^{w_0}(\cdot,u_0)\| \\ \sim e^{-(n+1)g_{\Omega_\alpha}(u_0,\infty)}k_{\Omega_\alpha}(u_0,u_0)^{-1}\exp\left(\int_{\Gamma_\alpha} \log w_0(x)\,\omega(u_0,dx,\Omega_{\alpha})\right). \qquad
	\end{multline*}
	At the same time, it is clear that
	\[
		\|w T_n^{w}(\cdot,\infty)\|_{\Gamma_\alpha}\geq \|w_0 T_{n+1}^{w_0}(\cdot,\infty)\| \sim 2\cos(\alpha/4)^2\Cap(\Gamma_\alpha)^{n+1}\exp\left(\int_{\Gamma_\alpha} \log w_0(x)\,d\mu_{\Gamma_\alpha}(x)\right).
	\]
	The logarithmic integral of $w$ can be split into two parts:
	\[\int_{\Gamma_\alpha}\log w(x)\,\omega(u_0,dx,\Omega_\alpha) = \int_{\Gamma_\alpha}\log w_0(x)\,\omega(u_0,dx,\Omega_\alpha) +\int_{\Gamma_\alpha}\log |x-u_j|\,\omega(u_0,dx,\Omega_\alpha).\]
    By \cite[Sect.II.4]{ST97}, we see that
	\begin{equation}
		\exp\left(\int_{\Gamma_\alpha}\log |x-u_j|\,\omega(u_0,dx,\Omega_{\alpha})\right) = 
		|u_0-u_j| e^{-g_{\Omega_\alpha}(u_0,\infty)},
		\label{eq:frostman_theorem}
	\end{equation}
	which simplifies to $\Cap(\Gamma_\alpha)$ when $u_0=\infty$. This implies that for any $u_0\in \Omega_\alpha$, we have
	\begin{equation}
		\liminf_{n\rightarrow \infty} 
		\|w T_n^{w}(\cdot,u_0)\|_{\Gamma_{\alpha}}e^{ng_{\Omega_\alpha}(u_0,\infty)}
		\geq k_{\Omega_\alpha}(u_0,u_0)^{-1}
		\exp\left(\int_{\Gamma_\alpha} \log w(x)\,\omega(u_0,dx,\Omega_{\alpha})\right).
		\label{eq:liminf_weight_zero}
	\end{equation}
To estimate the corresponding $\limsup$, we define $w_\epsilon$ as
	\begin{equation}
	w_\epsilon(u) = w_0(u)(|u-u_j|+\epsilon),
	\end{equation}
	and observe that $w_\epsilon(u)>w(u)$ for any $u\in \Gamma_\alpha$. Additionally, $w_\epsilon$ satisfies the assumptions necessary to ensure that 
	\eqref{eq:limit_chebyshev_norm} holds. Consequently, we have
	\[
		\|w T_n^{w}(\cdot,u_0)\|_{\Gamma_\alpha}\leq \|w_{\epsilon} T_n^{w_{\epsilon}}(\cdot,u_0)\|_{\Gamma_\alpha}\sim e^{-ng_{\Omega_\alpha}(u_0,\infty)}k_{\Omega_\alpha}(u_0,u_0)^{-1}\exp\left(\int_{\Gamma_\alpha} \log w_{\epsilon}(x)\,\omega(u_0,dx,\Omega_{\alpha})\right).
	\]
	As $\epsilon\rightarrow 0$, the expression
	\[\int_{\Gamma_\alpha}\log w_\epsilon(x)\,\omega(u_0,dx,\Omega_{\alpha}) = \int_{\Gamma_\alpha}\log w_0(x)\,\omega(u_0,dx,\Omega_{\alpha})+\int_{\Gamma_\alpha}\log(|u-u_j|+\epsilon)\,\omega(u_0,dx,\Omega_{\alpha})\]
	converges, by the monotone convergence theorem, to
	\[\int_{\Gamma_\alpha}\log w(x)\,\omega(u_0,dx,\Omega_{\alpha}).\]
	We thus find that 
	\begin{equation}
	\limsup_{n\rightarrow \infty}
	   \|w T_n^{w}(\cdot,u_0)\|_{\Gamma_{\alpha}}e^{ng_{\Omega_\alpha}(u_0,\infty)}	   
	   \leq k_{\Omega_\alpha}(u_0,u_0)^{-1}
	   \exp\left(\int_{\Gamma_\alpha} \log w(x)\,\omega(u_0,dx,\Omega_{\alpha})\right).
	   \end{equation}
	Together with \eqref{eq:liminf_weight_zero}, this establishes that
	\begin{equation} \label{As}		
	\|w T_n^{w}(\cdot,u_0)\|_{\Gamma_\alpha}\sim  e^{-ng_{\Omega_\alpha}(u_0,\infty)}k_{\Omega_\alpha}(u_0,u_0)^{-1}\exp\left(\int_{\Gamma_\alpha} \log w(x)\,\omega(u_0,dx,\Omega_{\alpha})\right).
	\end{equation}

The use of induction extends \eqref{As} to any weight of the form 
	\begin{equation} w(u) = w_0(u)|u-u_j|^m,\end{equation}
	where $m$ is a positive integer. Similarly, if $m$ is a positive integer and the weight has the form
	\[w(u) = w_0(u)|u-u_j|^{-m},\]
	it follows that
	\[T_{n+m}^w(u,u_0) =  \begin{cases}
	(u-u_j)^{m}(u_0-u_j)^{-m}T_{n}^{w_0}(u,u_0),& u_0\neq \infty, \vspace{0.15cm} \\ 
	\qquad \;(u-u_j)^{m}T_{n}^{w_0}(u,\infty), & u_0 = \infty. 
  \end{cases}
\]
	Hence, a brief computation confirms the validity of \eqref{As} in this case as well. 
	
	To complete the proof, it remains to verify that \eqref{As} holds for weights of the form 
	\begin{equation}w(u) = w_0(u)|u-u_j|^s,\end{equation}
	where $0<s<1$. To this end, we introduce the weight functions, parameterized by $\delta>0$:
	\[w_l(u) = \begin{cases}
		w_0(u)|u-u_j|, & |u-u_j|<\delta,\\
		\quad \; \; \, w(u), & |u-u_j|\geq \delta, \\
	\end{cases}\]
	and
	\[w_u(u) = \begin{cases}
		w_0(u), & |u-u_j|<\delta,\\
		\, w(u), & |u-u_j|\geq \delta. \\
	\end{cases}\]
	It is a simple matter to verify that $w_l(u)\leq w(u)\leq w_u(u)$ for all $u\in \Gamma_\alpha$ and, consequently, 
	\[\|w_l T_n^{w_l}(\cdot,u_0)\|_{\Gamma_\alpha}\leq\|w T_n^{w}(\cdot,u_0)\|_{\Gamma_\alpha}\leq\|w_u T_n^{w_u}(\cdot,u_0)\|_{\Gamma_\alpha}.\]
	From this, it follows that
	\begin{multline*}
		\quad k_{\Omega_\alpha}(u_0,u_0)^{-1}
		\exp\left(\int_{\Gamma_\alpha}\log w_l(x)\,\omega(u_0,dx,\Omega_{\alpha})\right)
		\leq \liminf_{n\rightarrow\infty}
		\|w T_n^{w}(\cdot,u_0)\|_{\Gamma_{\alpha}}e^{ng_{\Omega_\alpha}(u_0,\infty)} \\
	        \leq\limsup_{n\rightarrow\infty}
	        \|w T_n^{w}(\cdot,u_0)\|_{\Gamma_{\alpha}}e^{ng_{\Omega_\alpha}(u_0,\infty)}
	        \leq k_{\Omega_\alpha}(u_0,u_0)^{-1}
	        \exp\left(\int_{\Gamma_\alpha}\log w_u(x)\,\omega(u_0,dx,\Omega_{\alpha})\right). \quad
	\end{multline*}
	Since
	\[
	\int_{\Gamma_\alpha}\log w_l(x)\,\omega(u_0,dx,\Omega_{\alpha})
	\leq \int_{\Gamma_\alpha}\log w(x)\,\omega(u_0,dx,\Omega_{\alpha})
	\leq \int_{\Gamma_\alpha}\log w_u(x)\,\omega(u_0,dx,\Omega_{\alpha}),	
	\]
	the monotone convergence theorem implies that as $\delta\rightarrow 0$, the upper and lower bounds 
	converge to the same limit. This completes the proof.
\end{proof}

Theorem \ref{thm:main_norm_asymptotics} provides an asymptotic formula for the weighted norm of a broad class of weighted Chebyshev polynomials on circular arcs. To our knowledge, this is the first example of such a result for Jordan arcs beyond the classical case of an interval.

Interestingly, the key argument that allows us to extend the result from positive continuous weights to weights with zeros is not specific to intervals or circular arcs; rather, it applies to any 
Jordan arc and, in fact, to more general compact sets. More precisely, let $K$ be a regular (for potential theory) compact set for which there exists a constant $C_K$ such that
	\begin{equation} \label{W0}
	\lim_{n\rightarrow \infty}\cW_n(K,w_0)= 
	C_K\exp\left(\int_{K} \log w_0(x)\,d\mu_{K}(x)\right)
	\end{equation}
	for every continuous positive weight $w_0:K\to (0,\infty)$. Then the method used in the proof of Theorem \ref{thm:main_norm_asymptotics} can be adapted to show that \eqref{W0} remains valid for more general weights of the form
	\begin{equation}
	   w(x) = w_0(x)\prod_{j=1}^{m}|x-x_j|^{s_j},
	\end{equation}   
	where $s_j\in \R$ and $x_j\in K$ for $j=1,\dots,m$.

\section*{Acknowledgement}
These results were established during a SQuaRE meeting at the American Institute of Mathematics (AIM) in Pasadena, CA. We extend our heartfelt gratitude to AIM for their generous hospitality and for creating such an inspiring scientific environment.

\bibliographystyle{plain} 

\end{document}